\providecommand{\Aut}{\mathop{\rm Aut}\nolimits}
\providecommand{\Bir}{\mathop{\rm Bir}\nolimits}
\providecommand{\Card}{\mathop{\rm Card}\nolimits}
\providecommand{\Disc}{\mathop{\rm Disc}\nolimits}
\providecommand{\Conv}{\mathop{\rm Conv}\nolimits}
\providecommand{\id}{\mathop{\rm Id}\nolimits}
\providecommand{\interieur}{\mathop{\rm Int}\nolimits}
\providecommand{\lc}{\mathop{\rm lc}\nolimits}
\providecommand{\ord}{\mathop{\rm ord}\nolimits}
\providecommand{\Res}{\mathop{\rm Res}\nolimits}
\providecommand{\Sing}{\mathop{\rm Sing}\nolimits}
\providecommand{\Supp}{\mathop{\rm Supp}\nolimits}
\newtheorem{theorem}{Theorem}[section]
\newtheorem{lemma}[theorem]{Lemma}
\newtheorem{proposition}[theorem]{Proposition}
\newtheorem{corollary}[theorem]{Corollary}
\newtheorem{remark}[theorem]{Remark}
\newtheorem{definition}[theorem]{Definition}
\newtheorem{Problem}[theorem]{Problem}
\newtheorem{question}[theorem]{Question}
\title{Plane Curves with Minimal Discriminant}
\author{D. Simon and M. Weimann} 
\begin{document}
\maketitle

\begin{abstract}
We give lower bounds for the degree of the discriminant with respect to $y$ of  separable polynomials $f\in \mathbb{K}[x,y]$ over an algebraically closed field of characteristic zero. Depending on the invariants involved in the lower bound, we give a geometrical characterisation of those polynomials having minimal discriminant, and give  an explicit construction of all such polynomials in many cases. In particular, we show that irreducible monic polynomials with minimal discriminant coincide with coordinate polynomials. We obtain analogous partial results for the case of nonmonic or reducible polynomials by studying their $GL_2(\mathbb{K}[x])$--orbit and by establishing some combinatorial constraints on their Newton polytope. Our results suggest some natural extensions of the embedding line theorem of Abhyankar-Moh and of the Nagata-Coolidge problem to the case of unicuspidal curves  of $\mathbb{P}^1\times\mathbb{P}^1$.
\end{abstract}



\section{Introduction}\label{s0}

Let $f\in \mathbb{K}[x,y]$ be a bivariate polynomial defined over an algebraically closed field $\mathbb{K}$ of characteristic zero. We denote by $d_x$ and $d_y$ the respective partial degrees of $f$ with respect to $x$ and $y$, and by 
\[
\Delta_y(f):=\Disc_{y}(f) \in \mathbb{K}[x]
\]
the discriminant of $f$ with respect to $y$. In this note, we study polynomials with discriminants of low degrees. More precisely, we focus on the following problem:

\begin{Problem}\label{P1}
Give a lower bound for the degree of the discriminant in terms of some invariants attached to $f$ and construct all polynomials whose discriminant reaches this lower bound.
\end{Problem}

Throughout the paper, we assume that $f$ is \textit{primitive} (with respect to $y$), that is $f$ has no factor in $\mathbb{K}[x]$. This hypothesis is not restrictive for our purpose thanks to the well known multiplicative properties of the discriminant. We also assume that $f$ is separable with respect to $y$ in order to avoid zero discriminants.

\paragraph*{The case of monic polynomials.}  We say that $f\in \mathbb{K}[x,y]$ is \textit{monic} (with respect to $y$) if its leading coefficient, with respect to $y$, is invertible, that is does not depend on $x$. 


\begin{theorem}\label{tminimal}
Let $f\in \mathbb{K}[x,y]$ be a primitive squarefree polynomial with $r$ irreducible factors. Then
\[
\deg_x \Delta_y(f)\ge d_y-r.
\] 
If moreover $f$ is monic, then the equality holds if and only if there exists a polynomial automorphism $\sigma=(\sigma_x,\sigma_y)\in \Aut(\mathbb{A}^2)$ and a degree $r$ polynomial $g\in \mathbb{K}[y]$ such that
$f=g\circ \sigma_y$. 
\end{theorem}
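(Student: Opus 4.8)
The plan is to split the statement into the inequality $\deg_x\Delta_y(f)\ge d_y-r$ and the equality characterisation in the monic case, and to treat the latter as the substantive half.

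For the inequality: decompose $f=f_1\cdots f_r$ into irreducible factors and use the multiplicativity $\Delta_y(f)=\Delta_y(f_1)\cdots\Delta_y(f_r)\cdot\prod_{i<j}\Res_y(f_i,f_j)^2$. So it suffices to show $\deg_x\Delta_y(f_i)\ge d_{y,i}-1$ for each irreducible factor, where $d_{y,i}=\deg_y f_i$. For an irreducible $f_i$, the curve $C_i=\{f_i=0\}$ projects to the $x$-line with degree $d_{y,i}$; generically (away from the finitely many branch points) the fiber has $d_{y,i}$ distinct points, and each branch point forces a drop. The point is that the projection, restricted to the smooth locus and viewed as a cover of an open subset of $\mathbb{A}^1$, must actually ramify somewhere — otherwise, by Riemann–Hurwitz on the normalisation together with the fact that a nonconstant morphism $C_i\to\mathbb{A}^1$ from an affine curve... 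I would argue via the genus/Euler-characteristic count or, more elementarily, via the Riemann–Hurwitz inequality $\deg_x\Delta_y(f_i)\ge \sum_P(e_P-1)\ge d_{y,i}-1$, the last bound coming from the fact that a connected $d_{y,i}$-sheeted cover of an open subset of $\mathbb{A}^1$ cannot be totally unramified (the fundamental group argument, or: such a cover would extend to an étale cover of $\mathbb{A}^1$, which is simply connected, forcing $d_{y,i}=1$ if fully unramified — but we need the count over all of $\mathbb{A}^1$ including the branch points coming from $\Delta_y$). The clean statement is: the number of branch points counted with multiplicity equals $\deg_x\Delta_y(f_i)$ up to the contribution of the point at infinity, and connectedness gives $\ge d_{y,i}-1$.

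**The equality case — the hard part.**

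Now suppose $f$ is monic and $\deg_x\Delta_y(f)=d_y-r$. I expect this to force each irreducible factor to be "as unramified as possible": the resultants $\Res_y(f_i,f_j)$ must be constant (the curves $C_i,C_j$ are disjoint in $\mathbb{A}^2$), and each $\Delta_y(f_i)$ has degree exactly $d_{y,i}-1$. The geometric meaning of $\deg_x\Delta_y(f_i)=d_{y,i}-1$ for a monic irreducible $f_i$ is that the projection $C_i\to\mathbb{A}^1$ has exactly one branch point, totally ramified, and $C_i$ is smooth there and everywhere (monicity prevents vertical asymptotes, and low discriminant prevents nodes/cusps). So $C_i$ is a smooth affine curve admitting a morphism to $\mathbb{A}^1$ of degree $d_{y,i}$ that is a cover branched over exactly one point with a single preimage. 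Completing to the normalisation $\overline{C_i}$ and analysing the behaviour at infinity, Riemann–Hurwitz gives $2g(\overline{C_i})-2 = -2d_{y,i} + (d_{y,i}-1) + (\text{contribution at }\infty)$; since monic polynomials have controlled behaviour at infinity in the $x$-direction, I would extract $g(\overline{C_i})=0$ and that $C_i$ is isomorphic to $\mathbb{A}^1$, i.e. $f_i$ is a coordinate line up to... — more precisely, that the inclusion $C_i\hookrightarrow\mathbb{A}^2$ together with the projection exhibits $f_i$ as a \emph{coordinate polynomial}. This is exactly where the Abhyankar–Moh embedding line theorem enters: a smooth rational curve in $\mathbb{A}^2$ with one place at infinity is the image of a coordinate line, hence there is $\sigma\in\Aut(\mathbb{A}^2)$ with $f_i\circ\sigma$ linear in $y$ (say $f_i\circ\sigma = y - p_i(x)$ after scaling), so $f_i = (\sigma_y - p_i(\sigma_x))\cdot(\text{unit})$.

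**Assembling the factors into a single automorphism.**

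The remaining issue is to upgrade the per-factor statement into the single clean form $f=g\circ\sigma_y$. Having $r$ disjoint smooth rational unicuspidal curves $C_1,\dots,C_r$ in $\mathbb{A}^2$, each a coordinate line, I want one automorphism $\sigma$ straightening all of them simultaneously, so that $C_i=\{\sigma_y=c_i\}$ for distinct constants $c_1,\dots,c_r\in\mathbb{K}$; then $f=\mathrm{unit}\cdot\prod_i(\sigma_y-c_i)$ and we take $g(y)=\prod_i(y-c_i)$, of degree $r$ — and after absorbing the unit into $\sigma$ or noting $f$ is monic so the unit is a nonzero constant which can be scaled into $g$. The mechanism is: straighten $C_1$ first via Abhyankar–Moh so $C_1=\{y=0\}$; then $f/f_1$ is a monic polynomial (in the new coordinates) still with minimal discriminant, whose zero locus is disjoint from $\{y=0\}$, so its leading coefficient in $y$ and its constant term behave well — an induction on $r$, using at each stage that a curve disjoint from $\{y=0\}$ which is itself a coordinate line must be a horizontal line $\{y=c\}$, because the only automorphisms of $\mathbb{A}^2$ preserving the pencil of lines parallel to a coordinate axis are triangular, and a coordinate line disjoint from $y=0$ has no $x$-dependence (otherwise it would meet $y=0$). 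I expect the main obstacle to be precisely this simultaneous-straightening / induction step: controlling how applying an automorphism to kill one factor affects monicity and the discriminant degree of the remaining product, and ruling out the possibility that the straightened curves are not honestly horizontal lines. The discriminant-multiplicativity bookkeeping — making sure equality is preserved factor by factor and that the cross-resultants are forced to be units — is the technical heart that makes the induction close.
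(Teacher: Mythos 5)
Your overall architecture is the same as the paper's (factor, use multiplicativity, prove the per-factor bound $\deg_x\Delta_y(f_i)\ge d_{y,i}-1$, show equality forces each factor to be minimal with constant cross-resultants, and invoke Abhyankar--Moh), and your Riemann--Hurwitz route to the inequality is a legitimate alternative to the paper's local computation, provided you justify $\ord_\alpha\Delta_y(f_i)\ge$ (ramification of the normalisation over $\alpha$). However, your geometric reading of the equality $\deg_x\Delta_y(f_i)=d_{y,i}-1$ is wrong as stated: it does \emph{not} mean that the projection to the $x$-line has exactly one affine branch point, totally ramified. For example $f=y^3-3y+x$ is monic, a coordinate, and has $\Delta_y(f)=108-27x^2$ of degree $d_y-1=2$, with two simple affine branch points. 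What minimality actually gives (and what your Riemann--Hurwitz bookkeeping yields once you only use that the \emph{total} affine contribution $\sum_\alpha(r_\alpha+2\delta_\alpha)$ equals $d_y-1$) is: genus $0$, smooth affine part, and total ramification concentrated at a single place over $x=\infty$; monicity then places that unique place at $(\infty,\infty)$, which is what Abhyankar--Moh needs. This is reparable, but the characterisation you feed into Abhyankar--Moh is false as written.

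The genuine gap is the assembling step, which you yourself flag as ``the main obstacle'' but do not close. After straightening $f_1$ to $y$, you must show that any other factor $h$, which is a coordinate polynomial whose curve avoids $\{y=0\}$ (equivalently $\Res_y(y,h)=\pm h(x,0)\in\mathbb{K}^*$), is of the form $\mu y+\lambda$. Your justification (``a coordinate line disjoint from $y=0$ has no $x$-dependence, otherwise it would meet $y=0$'') assumes the conclusion. The paper's Proposition 3.5 closes this with a specific input you are missing: since $h$ is a coordinate, $h-h(x,0)$ is again a coordinate, hence irreducible (the paper's Lemma C.2); it vanishes identically on $y=0$, so it is divisible by $y$, and irreducibility forces $h-h(x,0)=\mu y$. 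You also need to check that the hypothesis ``constant cross-resultant'' survives the straightening automorphism: an automorphism of $\mathbb{A}^2$ does not preserve monicity or resultants in general, and the paper's argument is that the transformed factors are still coordinates, hence still monic minimal, so the transformed curves avoid each other both in $\mathbb{A}^2$ and along $y=\infty$ over affine $x$, keeping the resultant constant. Without these two ingredients your induction on the factors does not close, even though the statement you are aiming at is correct.
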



The group $\Aut(\mathbb{A}^2)$ of automorphisms of $\mathbb{A}^2$ being generated by affine and elementary automorphisms thanks to Jung's Theorem \cite{Jung}, Theorem \ref{tminimal} gives a solution of Problem \ref{P1} for monic polynomials in terms of the invariants $d_y$ and $r$. Moreover, given $f$ monic for which the equality holds, we can compute the automorphism $\sigma$ recursively from the Newton polytope of any irreducible factor of $f$. 

Theorem \ref{tminimal} implies in particular that if $f$ is monic and satisfies $\deg_x \Delta_y(f)= d_y-r$, then $r$ divides $d_y$. Hence, either its discriminant is constant, or it satisfies the inequality
\[
\deg_x \Delta_y(f)\ge \Big\lceil \frac{d_y-1}{2} \Big\rceil.
\]
It turns out that this fact is still true for nonmonic polynomials, and we have moreover a complete classification of polynomials for which equality holds, solving Problem \ref{P1} in terms of the invariant $d_y$. The precise result requires some more notation and will be stated later in this introduction (Theorem \ref{treducible}).

Thanks to the multiplicative properties of the discriminant, the inequality in Theorem \ref{tminimal} is equivalent to the fact that any \textit{irreducible} polynomial satisfies the inequality \[\deg_x\Delta_y(f) \ge d_y-1.\]
A similar lower bound for irreducible polynomials appears in \cite[Prop.\,1]{CK}, under the additional assumption that $\deg f=d_y$. The second part of Theorem \ref{tminimal} for $r=1$ has to be compared with \cite[Thm.\,4]{GP}, where the authors show that \textit{if $d_y$ coincides with the total degree of $f$}, then $f$ is a coordinate of $\mathbb{C}^2$ if and only if $f$ is a Jacobian polynomial such that $\deg_x\Delta_y(f)=d_y-1$. Our result allows to replace the Jacobian hypothesis by irreducibility. Note further that being monic is a weaker condition than $\deg f=d_y$. 

\paragraph*{Bounds with respect to the genus.} If now we take into account the genus $g$ and the degree $d_y$ of $f$, we can refine the lower bound $d_y-1$ for irreducible polynomials: 

\begin{theorem}\label{t1}
Let $f\in \mathbb{K}[x,y]$ be a primitive irreducible polynomial. Then
\[ 2g+d_y-1 \le \deg_x\Delta_y(f) \le 2d_x(d_y-1),\]
where $g$ stands for the geometric genus of the algebraic curve defined by $f$. 
Moreover, the equality
\[ \deg_x\Delta_y(f) = 2g+d_y-1 \]
holds if and only if the Zariski closure $C\subset \mathbb{P}^1\times \mathbb{P}^1$ of the affine curve $f=0$ is a genus $g$ curve with a unique place supported on the line $x=\infty$ and smooth outside this place. 
\end{theorem}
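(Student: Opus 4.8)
The plan is to read $\deg_x\Delta_y(f)$ off the geometry of the projection to the $x$-coordinate. Let $C\subset\mathbb P^1\times\mathbb P^1$ be the Zariski closure of $\{f=0\}$, let $\nu\colon\widetilde C\to C$ be its normalisation, and set $\pi:=\pi_1\circ\nu\colon\widetilde C\to\mathbb P^1$. Since $f$ is irreducible, $\widetilde C$ is a smooth irreducible projective curve of genus $g$, and $\pi$ is finite of degree $d_y$; as $\Char\mathbb K=0$ it is separable and tamely ramified, so Riemann--Hurwitz gives $\deg R=2g-2+2d_y$ for the ramification divisor $R=\sum_P(e_P-1)P$.

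The first real step is to prove the identity
\[
\deg_x\Delta_y(f)=\deg R-\rho_\infty+2\,\delta_{\mathrm{aff}},
\]
where $\rho_\infty=\sum_{P\in\pi^{-1}(\infty)}(e_P-1)=d_y-\#\pi^{-1}(\infty)$ and $\delta_{\mathrm{aff}}=\sum_Q\delta_Q$ is the total $\delta$-invariant taken over the points $Q\in C$ with $x(Q)\neq\infty$. For monic $f$ this is the classical fact that the discriminant is the norm of the different, corrected by (twice) the conductor at the singular points; concretely, after localising at $a\in\mathbb K$ one decomposes the fibre of $f=0$ over $x=a$ into branches and finds that a branch of ramification index $e$ contributes $e-1$ to $v_a(\Delta_y(f))$, with an extra $2\delta_Q$ whenever the underlying point $Q$ of $C$ is singular. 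For nonmonic $f$ one uses the normalisation $\Delta_y(f)=\pm\,\lc_y(f)^{-1}\Res_y(f,\partial_yf)$: the division by $\lc_y(f)$ is precisely what makes the points of $C$ lying on $\{y=\infty\}$ above a finite $x$ contribute to $v_a(\Delta_y(f))$ exactly through the ramification of $\pi$ and their $\delta$-invariant, just like every other point of $C$.

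Granting the identity, Riemann--Hurwitz turns it into
\[
\deg_x\Delta_y(f)=2g+d_y-2+\#\pi^{-1}(\infty)+2\,\delta_{\mathrm{aff}}.
\]
Since $\#\pi^{-1}(\infty)\geq 1$ and $\delta_{\mathrm{aff}}\geq 0$, the lower bound $\deg_x\Delta_y(f)\geq 2g+d_y-1$ follows, with equality if and only if $\#\pi^{-1}(\infty)=1$ and $\delta_{\mathrm{aff}}=0$; the first condition says exactly that $C$ has a unique place over $x=\infty$ (necessarily with $e=d_y$), and the second that $C$ is smooth at every point with $x\neq\infty$, i.e.\ smooth away from that place — which is the asserted characterisation. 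For the upper bound I would use adjunction on $\mathbb P^1\times\mathbb P^1$: $C$ has bidegree $(d_x,d_y)$, hence arithmetic genus $(d_x-1)(d_y-1)=g+\delta(C)$ with $\delta(C)\geq\delta_{\mathrm{aff}}$; combining this with $\#\pi^{-1}(\infty)\leq d_y$ gives
\[
\deg_x\Delta_y(f)=2\,(g+\delta_{\mathrm{aff}})+d_y-2+\#\pi^{-1}(\infty)\leq 2(d_x-1)(d_y-1)+2d_y-2=2d_x(d_y-1).
\]

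The main obstacle is the identity of the second paragraph, and specifically its nonmonic case: one must check that the resultant-theoretic definition of $\Delta_y(f)$, once normalised by $\lc_y(f)$, records nothing beyond the ramification of $\pi$ over finite $x$ and twice the affine $\delta$-invariant, with the branches running off to $y=\infty$ over a finite $x$ folded in correctly. A clean way to organise the verification is to work over $\mathbb K[[x-a]]$ for each $a\in\mathbb K$, expand the fibre into Puiseux branches, and apply the substitution $y\mapsto 1/v$ to the branches through $y=\infty$ — this changes $\Delta_y$ only by an explicit power of $x$, which is exactly cancelled by the $\lc_y(f)^{-1}$ factor — after which the remaining computation is the standard local expression of a discriminant in terms of the different and the conductor.
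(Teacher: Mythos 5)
Your outline is correct, and it shares with the paper its key local ingredient while globalising it along a genuinely different route. The identity you isolate --- that for each finite $a$ the valuation of $\Delta_y(f)$ at $x=a$ equals $\sum (e-1)+2\delta$ over all points of $C$ in the fibre $x=a$, including those on $y=\infty$ --- is exactly Proposition \ref{p1} of the paper, proved there by using the $PSL_2(\mathbb{K})$-invariance of the homogeneous discriminant to move the fibre off $y=\infty$ and then applying Hensel factorisation, Teissier's lemma and the Milnor--Jung formula; this is cleaner than your sketch via $y\mapsto 1/v$ and the $\lc_y(f)^{-1}$ normalisation, where the phrase ``changes $\Delta_y$ only by a power of $x$'' is loose and the bookkeeping between branch groups is only indicated, so this is the one step you should either prove carefully or import. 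From there the arguments diverge: the paper works at infinity, writing $\deg_x\Delta_y(f)=2d_x(d_y-1)-\ord_\infty\Delta_Y(F)$ and bounding $\ord_\infty\Delta_Y(F)=r_\infty+2\delta_\infty$ via $r_\infty\le d_y-1$ and the adjunction bound $\delta_\infty\le (d_x-1)(d_y-1)-g$ (the upper bound being immediate from $\ord_\infty\ge 0$); you instead sum the local identity over the affine line and convert $\sum(e_P-1)$ into $2g-2+2d_y-\rho_\infty$ by Riemann--Hurwitz for the projection from the normalisation. Your organisation makes the lower bound and the equality case ($\#\pi^{-1}(\infty)=1$ and $\delta_{\mathrm{aff}}=0$, which does match the paper's geometric characterisation) independent of adjunction, the genus entering through Riemann--Hurwitz instead, at the price of needing adjunction for the upper bound, which the paper gets for free from the total degree $2d_x(d_y-1)$ of $\Delta_Y(F)$. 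The two globalisations are of course equivalent --- summing Proposition \ref{p1} over all of $\mathbb{P}^1$ together with adjunction recovers Riemann--Hurwitz --- but yours is a legitimate and slightly more self-contained way to make the term $2g$ appear.
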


Theorem \ref{tminimal}  is mainly a consequence of Theorem \ref{t1} combined with the embedding line theorem of Abhyankar-Moh \cite{AM} that asserts that every embedding of the line in the affine plane $\mathbb{A}^2$ extends to a polynomial automorphism of the plane. In particular, it appears the remarkable fact that a monic irreducible polynomial with  minimal discriminant with respect to $y$ is  also monic with minimal discriminant with respect to $x$ (Theorem \ref{t3}).

\begin{remark} Our results are specific to fields of characteristic zero. 
For instance, if $\mathbb{K}$ has characteristic $p$, the polynomial $f(x,y)=y^p+y^k+x$ is irreducible and satisfies
\[
\deg_x \Delta_y(f)=k-1,\quad \forall\,\, 1\le k < p.
\]
Hence, there is no nontrivial lower bound for the degree of the discriminant if we do not take some care on the degree. 
\end{remark}

\paragraph*{$G$-reduction of (nonmonic) minimal polynomials.}
We say that $f\in \mathbb{K}[x,y]$ is \textit{minimal} if it is irreducible and if its discriminant reaches the lower bound
\[\deg_x \Delta_y(f) = d_y-1.\]
Theorem \ref{tminimal} characterises monic minimal polynomials: they coincide with \textit{coordinate} polynomials, that is polynomials that form part of a basis of the $\mathbb{K}$-algebra $\mathbb{K}[x,y]$. 
In the nonmonic case, the characterisation of minimal polynomials is more complicated. Indeed, the second part of Theorem \ref{tminimal} is false in general since $\Aut(\mathbb{A}^2)$ does not preserve minimality of nonmonic polynomials.  An idea is to introduce other group actions in order to reduce minimal polynomials to a "canonical form". Since the discriminant of $f$ coincides with the discriminant of its homogenisation $F$ with respect to $y$, 
we may try to apply a reduction process to $F$.  The multiplicative group $G:=GL_2(\mathbb{K}[x])$ acts on the space $\mathbb{K}[x][Y]$ of homogeneous forms in $Y=(Y_0:Y_1)$ with coefficients in $\mathbb{K}[x]$ by
\begin{equation}\label{action1}
\begin{pmatrix} a & b \\ c & d \end{pmatrix}(F)=F(aY_0+b Y_1,cY_0+dY_1). 
\end{equation}
The partial degree $d_Y$ of $F$, the number $r$ of irreducible factors and the degree of the discriminant are $G$-invariant (see Section \ref{s1}). The group $G$ is thus a good candidate for reducing nonmonic polynomials with small discriminant to a simpler form, in the same vein as in Theorem \ref{tminimal}. 

We say that $F,H\in \mathbb{K}[x][Y]$ are $G$--equivalent, denoted by $F\equiv H$, if there exists $\sigma\in G$ such that $F=\sigma(H)$. The action (\ref{action1})  induces by dehomogenisation a well defined action on the set of \textit{irreducible} polynomials in $\mathbb{K}[x,y]$ with $d_y>1$, and more generally on the set of polynomials \textit{with no linear factors} in $y$. In particular, we can talk about $G$--equivalence of (affine) minimal polynomials of degree $d_y>1$. 

The $G$-orbit of a monic minimal polynomial contains many nonmonic minimal polynomials and it is natural to ask if all nonmonic minimal polynomials arise in such a way. We prove that the answer is no in general thanks to the following  counterexample.

\begin{theorem}\label{contrexintro}
Let $\lambda\in \mathbb{K}^*$. The polynomial $f= x(x-y^2)^2-2\lambda y(x-y^2)+\lambda^2$ is minimal but is not $G$--equivalent to a monic polynomial.
\end{theorem}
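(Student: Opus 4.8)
The plan is to verify the two assertions in turn: that $f$ is minimal, and that no monic polynomial lies in its $G$-orbit.

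\emph{Minimality.} Regard $f=xu^{2}-2\lambda yu+\lambda^{2}$ as a quadratic in $u:=x-y^{2}$. Its discriminant with respect to $u$ is $4\lambda^{2}(y^{2}-x)=-4\lambda^{2}u$, so putting $v^{2}=y^{2}-x$ and solving for $u$ and then for $x,y$ gives the rational parametrisation
\[
x(v)=\frac{\lambda(\lambda-2v^{3})}{v^{4}},\qquad y(v)=\frac{v^{3}-\lambda}{v^{2}} .
\]
One checks $f(x(v),y(v))\equiv 0$, and that $v=-y-\lambda^{-1}x(y^{2}-x)$ is an inverse on the curve, so $v\mapsto(x(v),y(v))$ is birational onto $\{f=0\}$. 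As $\deg_{v}x(v)=4=d_{y}$, the image has $y$-degree $4$ and so equals the reduced curve $\{f=0\}$; hence $f$ is irreducible (and primitive, $2\lambda$ being among its coefficients) of geometric genus $g=0$. To apply Theorem \ref{t1} we inspect the closure $C\subset\mathbb{P}^{1}\times\mathbb{P}^{1}$: on the normalisation $\mathbb{P}^{1}_{v}$ the function $x(v)$ is infinite only at $v=0$, a pole of order $4$ (and $x(\infty)=0$), so $C$ has a single place over $x=\infty$, supported at $(\infty,\infty)$. Off $v=0$ the parametrisation is injective by the inverse formula and is an immersion: at the zeros of $y'(v)=1+2\lambda v^{-3}$ one has $x'(v)=-8\lambda^{2}v^{-5}\neq 0$, and at $v=\infty$ one has $dx/dw=-2\lambda\neq0$ in the chart $w=1/v$. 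Thus $C$ is smooth off that place, and Theorem \ref{t1} gives $\deg_{x}\Delta_{y}(f)=2g+d_{y}-1=3=d_{y}-1$; so $f$ is minimal.

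\emph{No monic polynomial in the orbit.} Homogenising $f$ with respect to $y$ gives the form $F=xU^{2}-2\lambda Y_{0}Y_{1}U+\lambda^{2}Y_{0}^{4}$ with $U:=xY_{0}^{2}-Y_{1}^{2}$. For $\sigma\in G$ with second column $(b,d)\in\mathbb{K}[x]^{2}$ the coefficient of $Y_{1}^{4}$ in $\sigma(F)$ equals $F(b,d)$; since $\sigma$ acts by a linear substitution and $\sigma^{-1}\in G$, it preserves the content of forms, so $\sigma(f)$ is monic precisely when $F(b,d)\in\mathbb{K}^{*}$. As a pair $(b,d)$ is the second column of an element of $G$ exactly when $\gcd(b,d)\in\mathbb{K}^{*}$, it is enough to show $F(b,d)\notin\mathbb{K}^{*}$ for all $b,d\in\mathbb{K}[x]$. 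Writing $W:=xb^{2}-d^{2}$ we have
\[
F(b,d)=xW^{2}-2\lambda bd\,W+\lambda^{2}b^{4},
\]
and here a parity argument finishes things. If $b\neq0$, the monomials $xb^{2}$ and $d^{2}$ of $W$ have $x$-degrees $1+2\deg b$ and $2\deg d$, of opposite parity, hence unequal; so $\deg_{x}W=\max(1+2\deg b,\,2\deg d)$ is realised by a single monomial and $xW^{2}$, of $x$-degree $1+2\deg_{x}W$, strictly outgrows $bd\,W$ and $b^{4}$ (if $1+2\deg b>2\deg d$ the degrees are $3+4\deg b$ versus $\le 4\deg b+1$ and $4\deg b$; if $1+2\deg b<2\deg d$ they are $1+4\deg d$ versus $\le 4\deg d-1$ and $\le 4\deg d-4$). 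Therefore $\deg_{x}F(b,d)=1+2\deg_{x}W\ge 3$ for $b\neq0$, while $F(0,d)=xd^{4}$; in every case $F(b,d)$ has positive $x$-degree and so is not a nonzero constant. Hence $f$ is not $G$-equivalent to any monic polynomial.

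\emph{Where the difficulty lies.} The two non-routine steps are (i) the substitution $u=x-y^{2}$, which presents $f$ as a quadratic whose $u$-discriminant is $u$ up to a unit and thereby yields the explicit parametrisation feeding into Theorem \ref{t1}; and (ii) reducing monicity in the $G$-orbit to the single polynomial $F(b,d)=xW^{2}-2\lambda bd\,W+\lambda^{2}b^{4}$ with $W=xb^{2}-d^{2}$, whose leading term is forced by the odd/even clash between $\deg(xb^{2})$ and $\deg(d^{2})$. The immersion/injectivity check and the degree bookkeeping are then routine.
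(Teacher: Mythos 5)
Your proposal is correct, but it proves both halves by a genuinely different route than the paper. For minimality, the paper simply asserts a ``direct computation'' (in effect, computing $\Delta_y(f)$ and its degree), whereas you construct the explicit proper parametrisation $x(v)=\lambda(\lambda-2v^3)/v^4$, $y(v)=(v^3-\lambda)/v^2$ with polynomial inverse $v=-y-\lambda^{-1}x(y^2-x)$, and then invoke the geometric equality case of Theorem \ref{t1}; your checks (unique pole of $x(v)$ at $v=0$, $x'=-8\lambda^2v^{-5}\neq0$ at the zeros of $y'$, immersion at $v=\infty$) are correct and do establish the required ``one place over $x=\infty$, smooth elsewhere'' picture. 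For the second half, the paper deduces non-monicity from its $G$-reduction machinery: $(d_y,d_x,c)=(4,3,1)$ and $4\nmid d_x-c=2$, so $f$ is reduced by Theorem \ref{reduced} and hence $c=c_{\min}=1\neq0$, while any monic member of the orbit would force $c_{\min}=0$. You instead observe that the leading coefficient of any $\sigma(f)$ is $F(b,d)=xW^2-2\lambda bd\,W+\lambda^2b^4$ with $W=xb^2-d^2$ for a column $(b,d)$ of $\sigma$, and the odd/even clash between $\deg(xb^2)$ and $\deg(d^2)$ forces $\deg_xF(b,d)>0$ for all $(b,d)\neq(0,0)$; this is a correct, self-contained argument (your convention swaps $Y_0$ and $Y_1$ relative to the paper's, turning ``first column'' into ``second column'', but the orbit is unaffected, and proving the claim for all pairs rather than just unimodular ones is harmless). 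What each approach buys: yours is independent of Section \ref{orbits} (Theorem \ref{reduced} and Proposition \ref{sequence}) and gives an explicit certificate one can verify by hand; the paper's is two lines once that machinery is available and yields slightly more information, namely that $f$ is reduced with $c_{\min}=1$ and $d_{x,\min}=3$, which is how the example fits into the general classification.
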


This result will follow as a corollary of the $G$-reduction Theorem \ref{reduced} which shows in particular that if the degree $c$ of the leading coefficient of a minimal polynomial is not the smallest in the $G$-orbit, then $d_y$ necessarily divides $d_x-c$. The proof is in the spirit of Wightwick's results  \cite{wight} about orbits of $\Aut(\mathbb{C}^2)$. 
Although we can guess that this example is not unique, we were not able to find a single other such example despite a long computer search (see Subsection \ref{ssparam}). Indeed, it turns out that being simultaneously minimal and $G$-reduced still imposes divisibility restrictions on the partial degrees. In particular, we can show that all minimal polynomials of prime degree $d_y$ are $G$--equivalent to a monic polynomial, solving Problem \ref{P1} in that context. More precisely:

\begin{theorem}\label{nonmonicintro} Let $f$ be a minimal polynomial of prime degree $d_y$. Then there exists  $g\in \mathbb{K}[y]$ of degree $d_y$ such that 
\[
f(x,y)\equiv g(y)+x.
\]
In particular, $f$ is $G$--equivalent to a monic polynomial, hence to a coordinate polynomial.
\end{theorem}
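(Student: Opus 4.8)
The plan is to combine the $G$-reduction theorem (Theorem~\ref{reduced}) with the primality of $d_y$ and then invoke the Abhyankar--Moh embedding line theorem. First I would pass to a $G$-reduced representative $H\equiv f$ whose leading coefficient has minimal degree $c$ in the $G$-orbit; by $G$-invariance of $d_Y$, of $r$ and of $\deg_x\Delta_y$, the polynomial $H$ is again minimal of degree $d_y$ (and irreducible since $d_y>1$ for prime $d_y\ge 2$, with the trivial case $d_y=2$ handled directly if needed). The key input from Theorem~\ref{reduced} is the divisibility constraint: if $c$ is \emph{not} already the smallest possible, then $d_y \mid d_x - c$; but more to the point, the combinatorial/Newton-polytope restrictions established there force, for \emph{prime} $d_y$, that the only way to be simultaneously minimal and $G$-reduced is to have $c=0$, i.e.\ $H$ is monic. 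This is where primality is used: the divisibility restrictions on the partial degrees leave no room for a nontrivial leading coefficient when $d_y$ has no proper divisors, ruling out phenomena like the counterexample of Theorem~\ref{contrexintro} (where $d_y=4$).

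Once $H$ is monic and minimal, Theorem~\ref{tminimal} (case $r=1$) applies: $H$ is a coordinate polynomial, so there is $\sigma=(\sigma_x,\sigma_y)\in\Aut(\mathbb{A}^2)$ with $H=\sigma_y$ after possibly composing with an affine map in $y$; equivalently $H(x,y)=g(y)+x$ up to an automorphism, and since we only need $G$-equivalence we may further simplify. Concretely, I would argue that a monic coordinate polynomial of degree $d_y$ in $y$, after applying an elementary automorphism $(x,y)\mapsto(x+p(y),y)$ and an affine normalisation, becomes $u(x)\,y^{d_y}+\cdots$; but monicity plus the coordinate property via Abhyankar--Moh forces, after one more $G$-move, the normal form $g(y)+x$ with $\deg g=d_y$. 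The cleanest route is: monic minimal $\Rightarrow$ coordinate $\Rightarrow$ (Abhyankar--Moh / Jung) $G$-equivalent to $y$; then reconstituting the original $f$ from $H$ and noting that the chain of $G$-equivalences composes, we get $f\equiv g(y)+x$.

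I would organise the write-up as: (1) reduce to a $G$-reduced model $H$; (2) show primality of $d_y$ kills the "$c>$ minimum" branch of Theorem~\ref{reduced}, so the $G$-reduced model attains the minimal $c$, and then show that minimal $c$ together with the Newton-polytope constraints gives $c=0$ (monic); (3) apply Theorem~\ref{tminimal} with $r=1$ to conclude $H$, hence $f$, is $G$-equivalent to a coordinate polynomial; (4) extract the explicit normal form $g(y)+x$.

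The main obstacle I anticipate is step~(2): translating the conclusion of Theorem~\ref{reduced} — which gives divisibility of $d_x-c$ by $d_y$ under a non-minimality hypothesis on $c$, plus combinatorial constraints on the Newton polytope of a $G$-reduced minimal polynomial — into the statement that \emph{prime} $d_y$ forces $c=0$. One must check that the Newton polytope of a minimal, $G$-reduced $H$ has its relevant edge slopes controlled so that the leading-coefficient degree $c$ must be a multiple of $d_y/\gcd$-type quantity that, for prime $d_y$, collapses to $0$; equivalently, that the only $G$-reduced minimal polynomials with $0<c$ occur when $d_y$ is composite (as in Theorem~\ref{contrexintro} with $d_y=4$). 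I expect this to follow by carefully combining the degree bound $\deg_x\Delta_y=d_y-1$ with the slope/lattice-width constraints on the Newton polytope already packaged in the statement of Theorem~\ref{reduced}, but the bookkeeping linking $c$, $d_x$, $d_y$ and the polytope geometry is the delicate part and is where primality does the real work.
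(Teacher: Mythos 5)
Your overall strategy (pass to a $G$-reduced representative, use primality to force the monic/simple case, then conclude) is the same as the paper's, but there is a genuine gap at exactly the step you flag as delicate. Theorem~\ref{reduced} does \emph{not} contain the constraint you need: it only says that a reduced minimal polynomial satisfies $d_y \nmid d_x-c$, which for prime $d_y$ merely means $\gcd(d_x-c,d_y)=1$ and is perfectly compatible with $c>0$ (e.g.\ nothing in Theorem~\ref{reduced} excludes a reduced minimal polynomial with $d_y=5$, $d_x=3$, $c=1$). The statement that actually kills the $c>0$ branch is Theorem~\ref{conj}: for a reduced minimal polynomial with $d_x>1$ and $c>0$ one has $2\le\gcd(d_x-c,d_y)\le d_y/2$, which is impossible for prime $d_y$, so only case $(2)$ ($d_x=1$, $c=0$) survives. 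That inequality is not Newton-polytope bookkeeping packaged in Theorem~\ref{reduced}; the paper proves it by embedding the curve in the toric surface attached to $P(f)$, observing that $\gcd(d_x-c,d_y)=1$ would force $C\cdot E=1$, hence transversality and global smoothness, and then contradicting the genus formula (no interior lattice points) — with an alternative proof via the quasi-homogeneity of the discriminant (Lemma~\ref{lemma:genericdisc_p0} and the weighted-degree identities). Your proposal supplies neither argument, so step (2) as written does not close.

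Two further points. First, your route to the normal form through $\Aut(\mathbb{A}^2)$ is not legitimate for the stated conclusion: moves like $(x,y)\mapsto(x+p(y),y)$ or the swap are not in $G=GL_2(\mathbb{K}[x])$, and interleaving them with $G$-moves destroys $G$-equivalence — this is precisely the phenomenon behind Theorem~\ref{contrexintro}. Also ``$G$-equivalent to $y$'' is impossible for $d_y\ge 2$, since $G$-equivalence preserves $d_y$. Second, if you insist on deducing only $c=0$ rather than quoting case $(2)$ of Theorem~\ref{conj} directly, the finish can still be repaired without $\Aut(\mathbb{A}^2)$: reducedness gives $d_y\nmid d_x$, Proposition~\ref{prop:conj} (Abhyankar--Moh) then forces $d_x\mid d_y$, hence $d_x=1$ for prime $d_y$; Lemma~\ref{pol} then shows the Newton polytope is the triangle with vertices $(0,0),(0,d_y),(1,0)$, so the reduced model is $g(y)+\lambda x$, and a scalar matrix in $G$ (using that $\mathbb{K}$ is algebraically closed) normalises $\lambda$ to $1$. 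But the core of the theorem — excluding $c>0$ for prime $d_y$ — still requires the toric/genus (or discriminant) argument that your write-up leaves out.
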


Theorem \ref{nonmonicintro} follows from the fact that minimality implies that either $d_y$ divides $d_x-c$ or $d_x-c$ and $d_y$ are not coprime except for some trivial cases (Theorem \ref{conj}). The proof relies on a suitable toric embedding of the curve of $f$. 
It is natural to ask whether minimality implies the stronger 
fact that either $d_y$ divides $d_x-c$ or $d_x-c$ divides $d_y$. 
This property holds for $c=0$, a statement equivalent 
to the Abhyankar-Moh Theorem \cite{Ab}. In general, we need to study the singularity of smooth rational curves of $\mathbb{A}^1\times \mathbb{P}^1$ with a unique place along $\infty \times\mathbb{P}^1$, generalising the Abhyankar-Moh situation of smooth rational curves of $\mathbb{A}^2$ with a unique place at the infinity of $\mathbb{P}^2$. 

\paragraph*{Cremona equivalence of minimal polynomials.} In a close context, we can pay attention to Cremona reduction of minimal polynomials. Theorem \ref{contrexintro} shows that it is hopeless to reduce a nonmonic minimal polynomial to a coordinate by applying successively $GL_2(\mathbb{K}[x])$ and $\Aut(\mathbb{A}^2)$. However, both groups can be considered as subgroups of the Cremona group $\Bir(\mathbb{A}^2)$ of birational transformations of the plane and our results suggest to ask whether all minimal polynomials define curves that are Cremona equivalent to a line. We will prove for instance that the nonmonic minimal polynomial in Theorem \ref{contrexintro} satisfies this property (Proposition \ref{prop:exemple}). This open problem can be seen as a  generalisation of the Coolidge-Nagata problem \cite{KP} to unicuspidal curves of $\mathbb{P}^1\times \mathbb{P}^1$. 


\paragraph*{A uniform lower bound for reducible polynomials.} 
Our last result gives a uniform sharp lower bound for the degree of the discriminant of any separable (reducible) polynomial that depends only on $d_y$. Moreover it establishes a complete classification of polynomials that reach this lower bound. We need to express this classification in homogeneous coordinates, and we let $F\in \mathbb{K}[x][Y]$ stands for the homogeneous form associated to $f$ of degree $\deg_Y F=d_y$.

\begin{theorem}\label{treducible}
Let $f\in \mathbb{K}[x,y]$ be a primitive and squarefree polynomial. Then $f$ has constant discriminant if and only if $F\equiv H$ for some $H\in \mathbb{K}[Y]$. Otherwise, we have the inequality 
\[
\deg_x \Delta_y(f)\ge \Big\lceil \frac{d_y-1}{2} \Big\rceil
\] 
and the equality holds if and only if one of the following conditions holds:
\begin{enumerate}
\item $d_y=4$ and $F\equiv Y_0Y_1(Y_0^2+(\mu x+\lambda)Y_0Y_1+ Y_1^2)$, with $\mu,\lambda\in \mathbb{K}$, $\mu\ne 0$. 
\item $d_y=4$ and $F\equiv Y_1(H(Y)+xY_1^3)$, for some cubic form $H\in \mathbb{K}[Y]$.
\item $d_y$ is odd and $F\equiv Y_1 H(Y_0^2+xY_1^2,Y_1^2)$ for some form $H\in \mathbb{K}[Y]$.
\item $d_y$ is even and $F\equiv H(Y_0^2+xY_1^2,Y_1^2)$ for some form $H\in \mathbb{K}[Y]$.
\end{enumerate}
\end{theorem}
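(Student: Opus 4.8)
The plan is to deduce this statement from the sharper analysis underlying Theorems \ref{tminimal} and \ref{t1}, applied factor by factor, together with the $G$-invariance of the relevant data. First I would set up the bookkeeping: write $f=\prod_{i=1}^r f_i$ with $f_i$ irreducible of partial degree $d_i=\deg_y f_i$, so that $d_y=\sum d_i$, and use the multiplicativity of the discriminant, $\Delta_y(f)=\pm\big(\prod_i\Delta_y(f_i)\big)\big(\prod_{i<j}\Res_y(f_i,f_j)^2\big)$. By Theorem \ref{t1} (or its consequence $\deg_x\Delta_y(f_i)\ge d_i-1$ from Theorem \ref{tminimal}) each irreducible factor contributes at least $d_i-1$, with equality exactly when $f_i$ is minimal; and each resultant $\Res_y(f_i,f_j)$ is a nonzero polynomial, contributing $2\deg_x\Res_y(f_i,f_j)\ge 0$ to the total. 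Hence $\deg_x\Delta_y(f)\ge\sum_i(d_i-1)=d_y-r$, recovering the first inequality of Theorem \ref{tminimal}; to get the bound $\lceil(d_y-1)/2\rceil$ I need to understand when $r$ can be large relative to $d_y$ while the discriminant stays small. The key point is that the factors cannot all be "independent": if two factors $f_i,f_j$ both have constant discriminant and meet in no affine point, then as in Theorem \ref{treducible}'s constant-discriminant clause they are each $G$-equivalent into $\mathbb{K}[Y]$, and a short argument shows that a common $\sigma\in G$ can be chosen only in very constrained configurations — this is what forces the partial degrees to satisfy divisibility relations and pins down cases (1)--(4).

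The constant-discriminant characterisation should come first and somewhat separately. If $F\equiv H$ with $H\in\mathbb{K}[Y]$ then $\Delta_y$ is visibly constant (the discriminant of a form with coefficients in $\mathbb{K}$, and $G$-invariance of $\deg_x\Delta$). Conversely, if $\deg_x\Delta_y(f)=0$, then in the factorisation above every $\deg_x\Delta_y(f_i)=0$ and every $\Res_y(f_i,f_j)\in\mathbb{K}^*$; by the equality case of Theorem \ref{t1} each $C_i\subset\mathbb{P}^1\times\mathbb{P}^1$ is smooth with a unique place on $x=\infty$ and geometric genus forcing $2g_i+d_i-1=d_i-1$, i.e. $g_i=0$, and the curves are pairwise disjoint in the affine chart — in fact disjoint away from $x=\infty$. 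One then argues that such a configuration can be straightened: after a $G$-transformation each $C_i$ becomes (the closure of) a horizontal-type curve, forcing $F\equiv H\in\mathbb{K}[Y]$. I expect this to reduce to analysing the self-intersection and the place at infinity on the surface $\mathbb{P}^1\times\mathbb{P}^1$, using that a reduced curve of bidegree $(0,d_y)$ is exactly one of the form $H(Y)=0$.

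For the equality case of the inequality, write $\deg_x\Delta_y(f)=\lceil(d_y-1)/2\rceil$. Since $\deg_x\Delta_y(f)\ge d_y-r$ we get $r\ge d_y-\lceil(d_y-1)/2\rceil=\lfloor(d_y+1)/2\rfloor$, so $r$ is essentially $d_y/2$: the factors are forced to have very small degree, almost all of them linear in $y$ up to $G$. Because the action is on forms with no linear factor in $y$ we cannot dehomogenise naively, but at the level of $F$ the low-degree factors are linear forms $\ell_k(x)Y_0+m_k(x)Y_1$, and counting degrees in the multiplicativity formula shows that the resultants between distinct factors are forced to be constant except in a bounded number of slots, which is where the genuine $x$-dependence of $\Delta_y$ lives. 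Tracking exactly which factor (a cubic, or a degree-two piece, or a product of two linear forms tied together) carries the $x$-dependence produces the dichotomy: either one factor of $y$-degree $4$ carries everything (cases 1--2, with $d_y=4$ and the remaining structure recognised by classifying $y$-degree-$4$ forms with discriminant of $x$-degree $1$ or $2$ — a direct computation), or the $x$-dependence is "spread" via a single quadratic substitution $Y_0^2+xY_1^2$ applied to a constant form, giving cases 3--4 according to the parity of $d_y$ (the extra factor $Y_1$ in the odd case accounting for the parity mismatch). The main obstacle is the last step: showing the list is exhaustive, i.e. that no other distribution of degrees among the $G$-reduced factors is compatible with $\deg_x\Delta_y(f)=\lceil(d_y-1)/2\rceil$. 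This is where I would use the $G$-reduction Theorem \ref{reduced}-type divisibility constraints together with a careful analysis of how $\Res_y(f_i,f_j)$ and the individual discriminants can combine, ruling out, for instance, configurations with three or more nonconstant-discriminant factors or with a quintic carrying the $x$-dependence; I anticipate this is a finite but delicate case check, and the parity-driven appearance of the substitution $Y_0^2+xY_1^2$ is the conceptual heart that makes the bookkeeping close.
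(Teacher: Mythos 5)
Your opening bookkeeping (factor $f$, use multiplicativity of $\Delta_y$, bound each irreducible factor by $d_i-1$ via Theorem~\ref{t1}) does follow the paper's skeleton, but the decisive content is missing and some of your structural claims are incorrect. For the constant-discriminant part: constancy of $\Delta_y(f)$ forces each irreducible factor to have constant discriminant, and by Corollary~\ref{cmin} this forces $d_i\le 1$, i.e.\ \emph{every} factor is linear in $y$; your reading ``$2g_i+d_i-1=d_i-1$'' is wrong (the correct consequence of Theorem~\ref{t1} is $2g_i+d_i-1\le 0$), and you never draw the conclusion that only linear factors survive. Your subsequent ``straightening'' is exactly where the work lies: the paper (Lemma~\ref{lemma:reducible1}) does it concretely by noting that two linear factors $a_1Y_0+b_1Y_1$, $a_2Y_0+b_2Y_1$ with constant resultant yield an explicit matrix of $GL_2(\mathbb{K}[x])$ sending them to $Y_0,Y_1$, after which constant resultants with $Y_0$ and $Y_1$ force all remaining factors into $\mathbb{K}[Y]$. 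A vague appeal to self-intersections and places at infinity does not substitute for this; note also that $G$-equivalence concerns the whole form $F$, so one needs a \emph{single} transformation working for all factors simultaneously, which is precisely what that argument supplies.

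For the equality case the gap is more serious. Your structural picture --- ``almost all factors linear in $y$'', with the $x$-dependence concentrated in a bounded number of resultant slots --- is essentially the opposite of the truth: in the extremal configurations all cross-resultants are constant, at most one linear factor occurs ($Y_1$ in the odd case; two linear factors only in the sporadic $d_y=4$ family), and the $x$-dependence is spread one unit per minimal quadratic factor $Y_0^2+(x+a_i)Y_1^2$. Too many linear factors force many nonconstant resultants (this is the content of cases (2)--(3) of Lemma~\ref{lemma:reducible1} and of the subcases of Lemma~\ref{lemma:reducible2}), which is how the paper bounds their number. The exhaustiveness you defer to ``a finite but delicate case check'' is the heart of the theorem and needs three ingredients you do not supply: (i) Theorem~\ref{nonmonicintro}, identifying each minimal factor of prime degree $2$ or $3$ with $Y_0^2+(x+a)Y_1^2$, resp.\ $Y_0^3+aY_0Y_1^2+(x+b)Y_1^3$, up to $G$; (ii) Lemma~\ref{lemma:minimalresultant1} (a minimal form of degree $\ge 2$ has constant resultant with at most one constant linear form), which eliminates most distributions of linear factors; and (iii) the simultaneity step, Lemma~\ref{lemma:resultant1}: once one factor is normalised to $Y_0^2+(x+a_1)Y_1^2$, whose discriminant has odd degree in $x$, any further factor with constant resultant against it is automatically of the form $Y_0^2+(x+a_i)Y_1^2$ \emph{in the same coordinates}. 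Without (iii) one cannot pass from ``each factor is separately $G$-equivalent to a model'' to the stated normal forms for $F$; the divisibility constraints of Theorem~\ref{reduced} do not fill these holes. As it stands, the proposal is a plausible plan whose decisive steps are missing.
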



\paragraph*{Organisation of the paper.} We prove Theorem \ref{t1} in Section \ref{s1}. The proof is based on the classical relations between the valuation of the discriminant and the Milnor numbers of the curve along the corresponding critical fiber.  We prove Theorem  \ref{tminimal} in Section \ref{s2}, the main ingredients of the proof being Theorem \ref{t1} combined with the embedding line theorem of Abhyankar-Moh. In particular, we show that for a monic polynomial, minimality with respect to $y$ is equivalent to minimality with respect to $x$ (Theorem \ref{t3}). In Section \ref{SSnonmonic}, we focus on the $GL_2(\mathbb{K}[x])$-orbits of nonmonic minimal polynomials. We first characterise minimal polynomials that minimise the volume of the Newton polytope in their orbit (Subsection \ref{orbits}, Theorem \ref{reduced}). The counterexample of Theorem \ref{contrexintro} follows as a corollary. Although this example is not $G$--equivalent to a coordinate, we show in Subsection \ref{Cremona} that it defines a curve Cremona equivalent to a line and we address the question if this property holds for all minimal polynomials. In a close context, we show in Subsection \ref{toric} that the partial degrees of minimal polynomials obey to some strong divisibility constraints (Theorem \ref{conj}). Theorem \ref{nonmonicintro} follows as a corollary. At last, we prove Theorem \ref{treducible} in Section \ref{Sfin}.
The paper finishes with three appendices on related problems. In Appendix \ref{s3}, we study the relations between small discriminants with respect to $x$ and small discriminants with respect to $y$, extending Theorem \ref{t3} to the nonmonic case. In Appendix \ref{ssparam}, we give a parametric characterisation of minimal polynomials and we apply our result to the Computer Algebra challenge of computing nonmonic minimal polynomials. Finally, we give in Appendix  \ref{C}  a direct and instructive proof of the fact that coordinate polynomials are minimal, some of the lemmas listed here being used in the main part of the paper.

\section{Bounds for the degree of the discriminant. Proof of Theorem \ref{t1}.}\label{s1}

The upper bound in Theorem \ref{t1} for the degree of the discriminant follows from classical results about the partial degrees of discriminants of homogeneous forms with indeterminate coefficients. The lower bound follows by studying the relations between the vanishing order of the discriminant at infinity and the singularities of the curve of $f$. 

\vskip2mm
\noindent

We recall that in all of the sequel, $f$ is assumed to be primitive, hence with no factors in $\mathbb{K}[x]$. This assumption is not restrictive for our purpose thanks to the well known formula $\Delta_y(uf)=u^{2d_y-2}\Delta_y(f)$ when $u\in \mathbb{K}[x]$.

\paragraph*{Bihomogenisation.} Let us denote by $F$ the bihomogenised polynomial of $f$
\[
F(X,Y):= X_1^{d_x}Y_1^{d_y} f \Big(\frac{X_0}{X_1},\frac{Y_0}{Y_1}\Big),
\] 
where $X=(X_0:X_1)$ and $Y=(Y_0:Y_1)$ are homogeneous variables. We write $\mathbb{K}[X,Y]$ for the space of bihomogeneous polynomials in $X$ and $Y$. We denote by 
\[
\Delta_Y(F):=\Disc_Y(F)
\]
the discriminant of $F$ seen as a homogeneous form in $Y$. It is a homogeneous polynomial of degree $2d_y-2$ in the coefficients of $F$ which vanishes if and only if $F$ is not separable with respect to $Y$. In our case, it follows that $\Delta_Y(F)$ is a homogeneous polynomial in $X$ of total degree
\[
\deg_X\Delta_Y(F)=2d_X(d_Y-1)=2d_x(d_y-1).
\]
Since dehomogenisation of the discriminant of $F$ coincides with the discriminant of $f$, we get the following relation
\[
\deg_x \Delta_y(f)=\deg_X \Delta_Y(F) - \ord_{\infty} \Delta_Y(F)
\]
where $\ord_{\infty}$ stands for the vanishing order at $\infty:=(1:0)\in \mathbb{P}^1$. 
The upper bound in Theorem \ref{t1} follows. In order to get the lower bound, one needs an upper bound for $\ord_{\infty}\Delta_Y(F)$. Let 
\[
C\subset \mathbb{P}^1\times \mathbb{P}^1
\] 
be the curve $F=0$. It coincides by construction with the Zariski closure of the affine curve $f=0$ in the product of projective spaces $\mathbb{P}^1\times \mathbb{P}^1$. For a point $\alpha\in \mathbb{P}^1$ we denote by 
\[
Z_{\alpha}:= C\cap (X=\alpha)
\]
the set theoretical intersection of $C$ with the "vertical line" $X=\alpha$. 
It is zero-dimensional since otherwise $F$ would have a linear factor in $X$, contradicting the primitivity assumption on $f$. Moreover, we have 
\[
\Card(Z_{\alpha})\le d_Y,
\]
with strict inequality if and only if $\Delta_Y(F)(\alpha)=0$, that is if and only if $F(\alpha,Y)$ is not squarefree. In order to understand the order of vanishing of $\Delta_Y(F)$ at $\alpha$, we need to introduce some classical local invariants of the curve $C$.

\paragraph*{The ramification number.} Let $p\in C$. A \textit{branch} of $C$ at $p$ is an irreducible analytic component of the germ of curve $(C,p)$. 

\begin{definition}\label{def1} The \textit{ramification number} of $C$ over $\alpha\in \mathbb{P}^1$ is defined as
\[
r_{\alpha}:=d_y-\sum_{p\in Z_{\alpha}} n_p,
\]
where $n_p$ stands for the number of branches  of $C$ at $p$. 
\end{definition}

In other words, the ramification number measures the defect to the expected number $d_Y$ of branches of $C$ along the vertical line $X=\alpha$. It is also equal to the sum $\sum (e_{\beta}-1)$ over all places $\beta$ of $C$ over $\alpha$, where $e_{\beta}$ stands for the ramification index of $\beta$.

\paragraph*{The delta invariant.} Let $B$ be a branch. The local ring $\mathcal{O}_B$ has finite index in its integral closure $\bar{\mathcal{O}}_{B}$. The quotient ring is a finite dimensional vector space over $\mathbb{K}$ whose dimension 
\[
\delta(B):=\dim_{\mathbb{K}} \,\,\bar{\mathcal{O}}_{B}/\mathcal{O}_{B}
\]
is called the \textit{delta invariant} of $B$. More generally, we define the \textit{delta invariant} of $C$ at $p$ as the nonnegative integer
\[
\delta_p(C):=\sum_i \delta_p(B_i) +\sum_{i<j} (B_i\cdot B_j)_p
\]
where the $B_i$'s run over the branches of $C$ at $p$ and where $(B_i\cdot B_j)_p$ stands for the intersection multiplicity at $p$ of the curves $B_i$ and $B_j$. In some sense, the delta invariant $\delta_{p}(C)$ measures the complexity of the singularity of $C$ at $p$. In particular, we have  $\delta_p(C)=0$ if and only if $C$ is smooth at $p$.

\begin{definition}\label{def2}
The \textit{delta invariant of $C$ over $\alpha\in \mathbb{P}^1$} is
\[
\delta_{\alpha}:=\sum_{p\in Z_{\alpha}} \delta_{p}(C).
\] 
\end{definition}

The integer $\delta_{\alpha}$ thus measures the complexity of all singularities of $C$ that lie over $\alpha$.

\paragraph*{$PSL_2$-invariance of the discriminant.} The multiplicative group $GL_2(\mathbb{K}[x])$ of $2\times 2$ invertible matrices with coefficients in $\mathbb{K}[x]$ acts naturally on the space $\mathbb{K}[x][Y]$ of homogeneous forms in $Y=(Y_0:Y_1)$ with coefficients in $\mathbb{K}[x]$  by
\begin{equation}\label{action}
\begin{pmatrix} a & b \\ c & d \end{pmatrix}(F)=F(aY_0+b Y_1,cY_0+dY_1)
\end{equation}
This action preserves the degree in $Y$ and for $\tau \in GL_2(\mathbb{K}[x])$, we have 
\begin{equation}\label{invariance}
\Delta_Y(\tau(F))=\det(\tau)^{d_Y(d_Y-1)}\Delta_Y(F),
\end{equation}
so that the discriminant is $PSL_2(\mathbb{K}[x])$-invariant and the degree of the discriminant is $GL_2(\mathbb{K}[x])$-invariant.  This action also preserves the irreducibility. 
It induces by dehomogenisation a well defined action on the set of \textit{irreducible} polynomials in $\mathbb{K}[x,y]$ with $d_y>1$, and more generally on the set of polynomials \textit{with no linear factors} in $y$. The corresponding formula is  
\begin{equation}\label{action2}
\begin{pmatrix} a & b \\ c & d \end{pmatrix}(f)=(cy+d)^{d_y} f\Big(x,\frac{ay+b}{cy+d}\Big).
\end{equation}
We will study in more details the action of $GL_2(\mathbb{K}[x])$ in Section \ref{orbits}.

\paragraph*{Vanishing order of the discriminant.} For $\alpha=(\alpha_0:\alpha_1)\in \mathbb{P}^1$ and $H\in \mathbb{K}[X_0:X_1]$ a homogeneous form, the vanishing order $\ord_{\alpha} H$ of $H$ at $\alpha$ is the highest power of $\alpha_0 X_1 - \alpha_1 X_0$ that divides $H$. The vanishing order at $\alpha\ne \infty$ coincides with the usual valuation of the dehomogenisation of $H$ at $x-\alpha$. The vanishing order of the discriminant is related to the  ramification degree and the delta invariant thanks to the following key proposition:

\begin{proposition}\label{p1}
Let $\alpha\in \mathbb{P}^1$ and $F\in \mathbb{K}[X,Y]$ a bihomogeneous form with no factors in $\mathbb{K}[X]$. We have the equality \[\ord_{\alpha} \Delta_Y(F)=r_{\alpha} +2\delta_{\alpha}.\] In particular, we have
\[
\deg_{x} \Delta_y(f)=2d_x(d_y-1)-2\delta_{\infty}-r_{\infty}.
\]
\end{proposition}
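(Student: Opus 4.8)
The plan is to establish the local identity $\ord_{\alpha}\Delta_Y(F) = r_{\alpha} + 2\delta_{\alpha}$ at a single point $\alpha\in\mathbb{P}^1$, since the ``in particular'' statement then follows by summing over all $\alpha$: the total degree $\sum_{\alpha}\ord_{\alpha}\Delta_Y(F) = \deg_X\Delta_Y(F) = 2d_x(d_y-1)$, and subtracting off the contribution $\ord_{\infty}\Delta_Y(F) = r_{\infty}+2\delta_{\infty}$ at $\alpha=\infty$ leaves exactly $\deg_x\Delta_y(f)$ by the dehomogenisation relation recalled just before the ramification number was introduced. By the $GL_2(\mathbb{K}[x])$-invariance of the degree of the discriminant (formula (\ref{invariance})), after an invertible change of the $Y$ coordinates and a projective change of the $X$ coordinate we may assume $\alpha\ne\infty$ and that the fiber $X=\alpha$ meets $C$ only at affine points with $Y_1\ne 0$; thus we are reduced to a purely local statement about the usual discriminant of the dehomogenised $f\in\mathbb{K}[x,y]$ and its valuation at $x-\alpha$.

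Next I would localise at $x-\alpha$: write $R=\mathbb{K}[[x-\alpha]]$, work over its fraction field and algebraic closure, and factor $f$ over $R$ (or rather over the integral closure of $R$ in a suitable extension) into branches. The classical formula for the discriminant as a resultant, $\Delta_y(f) = \pm\lc_y(f)^{-1}\Res_y(f,\partial_y f)$, expands as a product over pairs of roots $y_i$ of $f$ in an algebraic closure of $\mathrm{Frac}(R)$: $\Delta_y(f) = \lc_y(f)^{2d_y-2}\prod_{i<j}(y_i-y_j)^2$. I would group the roots according to the point $p\in Z_{\alpha}$ they specialise to and then according to the branch $B$ they belong to. Within a single branch $B$ with ramification index $e_B$, the $e_B$ corresponding roots are the Puiseux conjugates, and a standard computation (essentially the one showing that the different of $\mathcal{O}_B/R$ has valuation $e_B-1+2\delta(B)$, or directly via Puiseux expansions) gives that $\sum_{i<j\text{ in }B}\ord(y_i-y_j)^2 = (e_B-1) + 2\delta(B)$; the $e_B-1$ comes from the ``tame'' self-ramification of the Puiseux series and the $2\delta(B)$ measures how far $\mathcal{O}_B$ is from its normalisation. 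For two distinct branches $B_i,B_j$ at the same point $p$, the cross terms contribute $2(B_i\cdot B_j)_p$ since $\sum\ord(y_k-y_l) = e_{B_i}e_{B_j}(B_i\cdot B_j)_p/(e_{B_i}e_{B_j}) $ suitably counted — more precisely the total order of $\prod(y_k-y_l)$ over $k\in B_i$, $l\in B_j$ equals the intersection multiplicity $(B_i\cdot B_j)_p$, contributing $2(B_i\cdot B_j)_p$ after squaring. Summing the branch-diagonal terms $(e_B-1)$ over all branches over $\alpha$ yields $\sum_{\beta}(e_\beta-1) = r_{\alpha}$, and the delta terms plus cross terms assemble into $2\sum_{p\in Z_\alpha}\big(\sum_i\delta_p(B_i)+\sum_{i<j}(B_i\cdot B_j)_p\big) = 2\delta_{\alpha}$.

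The main obstacle is the branch-diagonal computation $\sum_{i<j\text{ in }B}\ord(y_i-y_j)^2 = (e_B-1)+2\delta(B)$, and more generally making precise the bookkeeping that the valuation of the discriminant decomposes cleanly as ramification part plus twice the delta part. The cleanest route is probably to invoke the conductor–different formula: if $\pi\colon\bar{C}\to C$ is the normalisation and $\mathcal{O}_B\subset\bar{\mathcal{O}}_B$ the branch local rings, then $\mathrm{ord}_B(\text{different of }\bar{\mathcal{O}}_B/R) = (e_B-1) + \mathrm{ord}_B(\mathfrak{c}_B)$ where $\mathfrak{c}_B$ is the conductor, combined with $\dim_{\mathbb{K}}\bar{\mathcal{O}}_B/\mathcal{O}_B = \tfrac12\,\mathrm{length}(\bar{\mathcal{O}}_B/\mathfrak{c}_B)$... actually the robust statement is that the valuation of $\Delta_y(f)$ at $p$-adic places equals the degree of the different of $\mathcal{O}_{C,p}\hookrightarrow$ (product of branch normalisations) over $R$, which by the tower formula and Dedekind's different theorem is $\sum_\beta(e_\beta-1)$ plus the wild/singular correction $2\delta_p(C)$; in characteristic zero there is no wild ramification so the only correction is the singular one, which is exactly $2\delta_p$. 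I would present this via Puiseux series for concreteness rather than abstract commutative algebra: expand each root, compute pairwise valuations of differences, and verify that the total matches, citing the standard fact (e.g.\ in the theory of plane curve singularities) that $\delta_p(C) = \sum_i\delta_p(B_i) + \sum_{i<j}(B_i\cdot B_j)_p$ and that $\delta(B) = \sum(\text{contributions from Puiseux pairs})$ so that the two sides of the identity agree term by term.
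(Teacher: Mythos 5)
Your proposal is correct, and its key step is genuinely different from the paper's. You share the same initial reductions: move $\alpha$ to a finite point, use $PGL_2$-invariance of the discriminant to arrange that the fiber avoids $(\alpha,\infty)$ (so the leading coefficient is a unit in $\mathbb{K}[[x-\alpha]]$ and all roots are integral), and group the contributions by the points of $Z_\alpha$ — this is exactly the paper's Hensel-factorisation step, and like you it observes that cross-terms between distinct points of the fiber are units. The divergence is in the local identity: the paper identifies $\ord_\alpha$ of each local discriminant with the polar intersection number $(C\cdot C_y)_p$ and then applies Teissier's lemma $(C\cdot C_y)_p=\mu_p+d_p-1$ followed by the Milnor--Jung formula $\mu_p=2\delta_p-n_p+1$, whereas you expand the discriminant as $\lc_y(f)^{2d_y-2}\prod_{i<j}(y_i-y_j)^2$ and evaluate valuations branch by branch: the diagonal identity $\sum_{i<j\in B}\ord(y_i-y_j)^2=(e_B-1)+2\delta(B)$ via the index--discriminant/conductor--different formula (tame in characteristic zero), and $\ord\Res_y(f_{B_i},f_{B_j})=(B_i\cdot B_j)_p$ for the same-point cross terms. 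Both routes are sound; the paper's is shorter given the cited singularity-theory inputs, while yours is more valuation-theoretic and self-assembling — the decomposition $r_\alpha+2\delta_\alpha$ appears term by term, with $r_\alpha=\sum_\beta(e_\beta-1)$ read off directly — at the cost of proving (or citing) the Dedekind-type diagonal identity. Two small points to make explicit if you write it up: the factor $\lc_y(f)^{2d_y-2}$ has valuation $0$ after your reduction, and differences of roots specialising to distinct points of the fiber are units, so only same-point terms contribute.
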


\begin{proof}
Up to a change of coordinates of $\mathbb{P}^1$, there is no loss to assume that $\alpha=(0:1)$ and we will write simply $\ord_0$ for $\ord_{(0:1)}$. Note first that $\ord_0\Delta_Y(F)=\ord_0 \Delta_y(f)$. Since $\mathbb{K}$ has infinite cardinality, there exists $\beta \in \mathbb{K}$ such that $f(0,\beta)\ne 0$. For such a $\beta$, the leading coefficient with respect to $y$ of the transformed polynomial $y^{d_y}f(x,\beta+1/y)$ is a unit modulo $x$. Since by (\ref{invariance}) the discriminant is invariant under $PSL_2(\mathbb{K})$, we can thus assume that the leading coefficient of $f$ with respect to $y$ is a unit modulo $x$, meaning that the point $(0,\infty)$ does not belong to $C$. In such a case, Hensel's lemma ensures that we have a unique factorisation
\[
f=u\prod_{p\in Z_0} f_p \in \mathbb{K}[[x]][y]
\]
where $u\in\mathbb{K}[[x]]$ is a unit and where $f_p\in \mathbb{K}[[x]][y]$ is a monic polynomial giving the equation of the germ of curve $(C,p)$. Note that $f_p$ is not necessary irreducible. By the well known multiplicative relations between discriminants and resultants, we have 
\[
\Delta_y(f)=\pm u^{2d_y-2}\prod_{p\in Z_0} \Delta_{f_p}\prod_{p\ne q}\Res_y(f_p,f_q)
\]
where $\Res_y$ stands for the resultant with respect to $y$. The roots of $f_p(0,y)$ and $f_q(0,y)$ are distinct by assumption so the resultant $\Res(f_p,f_q)$ is a unit in $\mathbb{K}[[x]]$. Hence,
\[
\ord_0\Delta_y(f)=\sum_{p\in Z_0} \ord_0\Delta_y(f_p).
\] 
Since $f_p$ is a distinguished polynomial, we have
\[
\ord_0 \Delta_y(f_p)=(C\cdot C_y)_p,
\]
where $C_y$ stands for the polar curve $\partial_y f=0$. Now by Teissier's Lemma \cite[Chap.\,II, Prop.\,1.2]{tei}, we have
\[
(C\cdot C_y)_p=\mu_p + d_p-1
\]
where $d_p$ stands for the degree in $y$ of $f_p$ and where $\mu_p$ stands for the \textit{Milnor number} of $C$ at $p$, that is
\[
\mu_p(C):=(C_x\cdot C_y)_p,
\]
with $C_x$ the polar curve $\partial_x f=0$. The Milnor number and the delta invariant of a germ of curve are related by the Milnor-Jung formula {\cite[Thm.\,6.5.9]{wall}}
\[
\mu_p(C)=2\delta_p(C)-n_p(C)+1,
\]
where $n_p(C)$ stands for the number of branches of $C$ at $p$. 
Finally, we get:
\[
\ord_0 \Delta_y(f)=\sum_{p\in Z_0} (2\delta_p(C)-n_p(C)+d_p)
\]
Proposition \ref{p1} then follows from equality $\sum_p d_p=d_y$. 
\end{proof}

\paragraph*{Adjunction formula.} For $C$ an irreducible algebraic curve on a smooth complete algebraic surface, the \textit{adjunction formula} asserts that the difference between the arithmetic genus $p_a(C)$ and the geometric genus $g(C)$ is equal to the total sum of the delta invariants of the curve, that is
\begin{equation}\label{adjunction}
p_a(C)=g(C) + \sum_{p\in \Sing(C)} \delta_p(C),
\end{equation}
see for instance {\cite[Sec.\,2.11]{barth}}. This formula generalises the famous Pl\"ucker formula that computes the geometric genus of a projective plane curve with ordinary singularities. We deduce the following bound for the valuation of the discriminant:

\begin{proposition}\label{p2}
Let $\alpha\in \mathbb{P}^1$ and $F\in \mathbb{K}[X,Y]$ an irreducible bihomogeneous polynomial of partial degree $d_Y>0$ and geometric genus $g$. We have the inequality
\[
\ord_{\alpha}\Delta_Y(F)\le (2d_X-1)(d_Y-1)-2g.
\]
Moreover, equality holds if and only if the curve $C\subset \mathbb{P}^1\times \mathbb{P}^1$ defined by $F=0$ has a unique place on the line $X=\alpha$ and is smooth outside this place.
\end{proposition}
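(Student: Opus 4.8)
The plan is to derive the inequality directly from Proposition~\ref{p1} together with the adjunction formula \eqref{adjunction} on the smooth complete surface $S=\mathbb{P}^1\times\mathbb{P}^1$. Since $F$ is irreducible with $d_Y>0$, it cannot have a factor lying in $\mathbb{K}[X]$, so Proposition~\ref{p1} applies and yields
\[
\ord_{\alpha}\Delta_Y(F)=r_{\alpha}+2\delta_{\alpha},\qquad \delta_{\alpha}=\sum_{p\in Z_{\alpha}}\delta_p(C).
\]
It then suffices to bound each of the two summands $r_\alpha$ and $\delta_\alpha$ and to identify when each bound is sharp.

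For the ramification term, note that $\sum_{p\in Z_\alpha}n_p$ is exactly the number of places of the normalisation of $C$ lying over $\alpha$. The projection $C\to\mathbb{P}^1$ onto the $X$-factor has degree $d_Y$ (again because $F$ has no factor in $\mathbb{K}[X]$), so this number of places is at least $1$ and at most $d_Y$; hence $0\le r_\alpha\le d_Y-1$, and $r_\alpha=d_Y-1$ holds precisely when $C$ has a single place over $\alpha$, that is a unique place on the line $X=\alpha$.

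For the delta term, we trivially have $\delta_\alpha\le\sum_{p\in\Sing(C)}\delta_p(C)$, with equality if and only if every singular point of $C$ lies on the line $X=\alpha$. By \eqref{adjunction}, $\sum_{p\in\Sing(C)}\delta_p(C)=p_a(C)-g$, and adjunction on $S$ (using $K_S=\mathcal{O}_S(-2,-2)$ and that $C$ has bidegree $(d_X,d_Y)$) gives $p_a(C)=(d_X-1)(d_Y-1)$. Putting the two bounds together,
\[
\ord_\alpha\Delta_Y(F)=r_\alpha+2\delta_\alpha\le (d_Y-1)+2\bigl((d_X-1)(d_Y-1)-g\bigr)=(2d_X-1)(d_Y-1)-2g,
\]
which is the asserted inequality.

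Finally, equality forces equality in both bounds simultaneously: $C$ must have a unique place over $\alpha$ and all of its singularities must lie over $\alpha$. When there is a unique place over $\alpha$, the set $Z_\alpha$ reduces to the single point at which that place is centred, so ``smooth outside the line $X=\alpha$'' is the same as ``smooth outside this place''; conversely, that geometric condition makes both inequalities sharp, giving the stated characterisation. The argument is essentially bookkeeping on top of Proposition~\ref{p1}; the only points requiring any care are the genus computation on the quadric $\mathbb{P}^1\times\mathbb{P}^1$ and checking that the two equality conditions interlock into exactly the single clause in the statement, so I do not expect a genuine obstacle.
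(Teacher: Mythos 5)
Your proof is correct and follows essentially the same route as the paper: it combines Proposition~\ref{p1} with the bound $r_\alpha\le d_Y-1$, the adjunction formula, and the arithmetic genus $p_a(C)=(d_X-1)(d_Y-1)$ of a bidegree $(d_X,d_Y)$ curve in $\mathbb{P}^1\times\mathbb{P}^1$, and identifies the equality case by the same two maximality conditions. Your extra remark reconciling ``smooth outside the line $X=\alpha$'' with ``smooth outside this place'' when the place is unique is a small clarification the paper leaves implicit.
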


\begin{proof}
Since $C$ has at least one branch along the line $X=\alpha$, the ramification number $r_{\alpha}$ is bounded above by $d_y-1$. Hence Proposition \ref{p1} implies that
\[
\ord_{\alpha}\Delta_Y(F)\le 2\sum_{p\in \Sing(C)} \delta_p(C)+d_y-1.
\]
It is well known that a curve $C\subset \mathbb{P}^1\times \mathbb{P}^1$ of bidegree $(d_x,d_y)$ has arithmetic genus 
\[
p_a(C)=(d_x-1)(d_y-1)
\]
and the upper bound of Proposition \ref{p2} follows from the adjunction formula (\ref{adjunction}). 
Equality holds in Proposition \ref{p2} if and only if both invariants $\delta_{\alpha}$ and $r_{\alpha}$ are maximal once the genus is fixed. This is equivalent to the equalities
\[
\delta_{\alpha}=\sum_{p\in \Sing(C)} \delta_p(C)  \quad {\rm and} \quad r_{\alpha}=d_y-1.
\]
The first equality is equivalent to $\delta_{\beta}=0$ for all $\beta\ne \alpha$, meaning geometrically that $C$ is smooth outside the line $X=\alpha$. The second equality is equivalent to the fact that $C$ has a unique branch along this line.
\end{proof}

\paragraph*{Proof of Theorem \ref{t1}.} Theorem \ref{t1} follows by combining the equality
\[
\deg_x \Delta_y(f)=\deg_X\Delta_Y(F) - \ord_{\infty} \Delta_Y(F)
\]
with the inequality of Proposition \ref{p2}. $\quad\square$

\begin{corollary}\label{cmin}
Let $f\in \mathbb{K}[x,y]$ be an irreducible polynomial of partial degree $d_y>0$. Then 
\[
\deg_x\Delta_y(f)\ge d_y-1
\]
and equality holds if and only if the curve 
$C\subset \mathbb{P}^1\times \mathbb{P}^1$
is rational, with a unique place over the line $x=\infty$, and smooth outside this place.
\end{corollary}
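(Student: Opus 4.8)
The plan is to read this off directly from Theorem \ref{t1} by specialising the genus to zero. Since the geometric genus $g$ of the curve $C \subset \mathbb{P}^1 \times \mathbb{P}^1$ defined by $F=0$ is a nonnegative integer, the left-hand inequality $2g + d_y - 1 \le \deg_x \Delta_y(f)$ of Theorem \ref{t1} immediately yields $\deg_x \Delta_y(f) \ge d_y - 1$ under the standing primitivity and separability assumptions. (In the boundary value $d_y = 1$ this remains consistent: $f$ is then primitive and linear in $y$, so $\Delta_y(f)$ is a nonzero constant, $C$ is a smooth rational curve of bidegree $(d_x,1)$ — its arithmetic genus $(d_x-1)(d_y-1)$ vanishes — meeting the line $x=\infty$ in the single point cut out by $F((1:0),Y)$, which is a nonzero linear form in $Y$ by primitivity.)

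For the equality clause, first assume $\deg_x \Delta_y(f) = d_y - 1$. Then $2g + d_y - 1 \le d_y - 1$ forces $g = 0$, so $C$ is rational; moreover the inequality $2g + d_y - 1 \le \deg_x \Delta_y(f)$ has now become an equality, so the characterisation of the equality case in Theorem \ref{t1} applies and tells us that $C$ has a unique place on the line $x = \infty$ and is smooth outside that place. Conversely, if $C$ is rational (i.e. $g = 0$) with a unique place over $x = \infty$ and smooth away from it, then the description appearing in the equality clause of Theorem \ref{t1} is satisfied with $g = 0$, whence $\deg_x \Delta_y(f) = 2\cdot 0 + d_y - 1 = d_y - 1$.

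I do not expect any genuine obstacle: the whole content is already packaged in Theorem \ref{t1}, and the corollary is just its restriction to $g = 0$, together with the observation that ``rational, with a unique place over $x = \infty$, smooth outside this place'' is literally the $g = 0$ instance of the geometric condition in Theorem \ref{t1}. The only points meriting a line of care are the degenerate value $d_y = 1$, handled above by hand, and checking that each of the two implications of the ``if and only if'' is matched against the corresponding half of Theorem \ref{t1}.
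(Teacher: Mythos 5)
Your derivation is correct and is exactly the argument intended in the paper: Corollary \ref{cmin} is stated there as an immediate consequence of Theorem \ref{t1}, obtained by specialising the genus to $g=0$ in both the inequality and the equality characterisation, just as you do. Your extra care with the case $d_y=1$ and with primitivity/separability is fine but not a point of divergence.
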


\paragraph*{Almost minimal discriminants.} Thanks to a parity reason, we can give also a geometrical characterisation of polynomials with "almost minimal" discriminant, that is for which equality $\deg_x \Delta(f)=d_y$ holds.

\begin{corollary}\label{pd_y}
Let $f\in\mathbb{K}[x,y]$ be irreducible. Then equality \[\deg_x \Delta_y(f)=d_y\] holds if and only if the closed curve $C\subset \mathbb{P}^1\times \mathbb{P}^1$ defined by $f$ is rational, with two places over the line $x=\infty$ and smooth outside these places.
\end{corollary}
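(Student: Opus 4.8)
The plan is to upgrade the chain of inequalities in the proof of Proposition~\ref{p2} into an exact identity, and then simply read off the equality case. By Proposition~\ref{p1},
\[
\deg_x \Delta_y(f) = 2d_x(d_y-1) - 2\delta_\infty - r_\infty ,
\]
while the adjunction formula (\ref{adjunction}), together with $p_a(C)=(d_x-1)(d_y-1)$, gives $\sum_{p\in\Sing(C)}\delta_p(C)=(d_x-1)(d_y-1)-g$. Introduce $\epsilon:=\sum_{p\in\Sing(C)}\delta_p(C)-\delta_\infty\ge 0$, the total delta invariant of $C$ concentrated away from the line $x=\infty$, and let $k\ge 1$ denote the number of places of $C$ over $x=\infty$, so that $r_\infty=d_y-k$ by Definition~\ref{def1}. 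Substituting these two relations into the displayed formula yields, after cancellation, the identity
\[
\deg_x \Delta_y(f) = (d_y-1) + 2(g+\epsilon) + (k-1).
\]

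Everything then follows from this identity by a positivity/parity argument (the same one that yields Corollary~\ref{cmin}, which is the case $g=\epsilon=0$, $k=1$). If $\deg_x\Delta_y(f)=d_y$, then $2(g+\epsilon)+(k-1)=1$; since $g$ and $\epsilon$ are nonnegative integers and $k\ge 1$, this forces $g=\epsilon=0$ and $k=2$. Hence $C$ is rational, it is smooth at every point not lying over $x=\infty$ (because $\epsilon=0$ makes every delta invariant away from the line $x=\infty$ vanish), and it has exactly two places over $x=\infty$. Conversely, these three conditions are precisely $g=0$, $\epsilon=0$, $k=2$, and the identity then returns $\deg_x\Delta_y(f)=(d_y-1)+0+1=d_y$.

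I do not expect a genuine obstacle here: the whole argument is bookkeeping on top of Propositions~\ref{p1} and~\ref{p2}. The one point needing care is to keep the two sources of ``extra'' vanishing of the discriminant separate — the delta invariant $\epsilon$ away from infinity, which contributes in steps of $2$, and the defect $k-1 = d_y-1-r_\infty$ of the number of places over $x=\infty$, which contributes in steps of $1$ — so that the value $d_y$, being exactly one above the minimum $d_y-1$, can only be reached through the second mechanism, at its minimal nontrivial value $k=2$. (The degenerate range $d_y\le 1$ needs no separate treatment: primitivity forces $d_y\ge 1$, and the identity remains valid and self-consistent there, both sides of the asserted equivalence being false.)
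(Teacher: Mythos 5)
Your proof is correct and follows essentially the same route as the paper: Proposition~\ref{p1} combined with the adjunction formula $p_a(C)=(d_x-1)(d_y-1)$, with a positivity/parity argument pinning down $r_\infty=d_y-2$ (two places) and $\delta_\infty=(d_x-1)(d_y-1)$ (rational and smooth away from $x=\infty$). Your exact identity $\deg_x\Delta_y(f)=(d_y-1)+2(g+\epsilon)+(k-1)$ is merely a cleaner repackaging of the paper's argument, which instead uses the bound $\delta_\infty\le(d_x-1)(d_y-1)$ and rules out $r_\infty=d_y-1$ by parity; both directions of the equivalence come out the same way.
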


\begin{proof} By Proposition \ref{p1}, we have $\deg_x \Delta_y(f)=d_y$ if and only if 
\[
d_y=2d_x(d_y-1)-2\delta_{\infty}-r_{\infty}.
\]
Since $\delta_{\infty}\le (d_x-1)(d_y-1)$ by the adjunction formula, it follows that 
$r_{\infty}\ge d_y-2$. But we have $r_{\infty}\le d_y-1$ and equality can not hold for a parity reason. Hence the only solution is $r_{\infty}=d_y-2$ and $\delta_{\infty}= (d_x-1)(d_y-1)$. This exactly means that $C$ is rational with two places over the line $x=\infty$ and smooth outside these two places. \end{proof}

\section{Classification of minimal monic polynomials. Proof of Theorem \ref{tminimal}}\label{s2}

\begin{definition}
We say that $f\in \mathbb{K}[x,y]$ is \textit{minimal} (with respect to $y$) if it is irreducible and satisfies the equality $\deg_x\Delta_y(f)= d_y-1$.
\end{definition}

\begin{definition} We say that $f\in \mathbb{K}[x,y]$ is \textit{monic} with respect to $y$ (resp. to $x$) if its leading coefficient with respect to $y$ (resp. to $x$) is constant. Take care that in the literature, this terminology often refers to polynomials with leading coefficient equal to $1$. 
\end{definition}

\subsection{Characterisation of monic minimal polynomial.}

\begin{theorem}\label{t3}
Let $f\in \mathbb{K}[x,y]$ be a nonconstant irreducible bivariate polynomial. The following assertions are equivalent:
\begin{enumerate}[(a)]
\item $d_y=0$, or $\deg_x\Delta_y(f)=d_y-1$ and $f$ is monic with respect to $y$.
\item $d_x=0$, or $\deg_y\Delta_x(f)=d_x-1$ and $f$ is monic with respect to $x$.
\item The affine curve $f=0$ is smooth rational, and has a unique place at infinity of $\mathbb{P}^2$.
\item There exists $\sigma\in\Aut(\mathbb{A}^2)$ such that $f\circ \sigma=y$.
\end{enumerate}
\end{theorem}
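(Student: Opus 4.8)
The plan is to establish (c)~$\Leftrightarrow$~(d), then (a)~$\Leftrightarrow$~(c), and finally obtain (b)~$\Leftrightarrow$~(c) for free from the fact that condition~(c) is symmetric in $x$ and $y$. One first disposes of the trivial cases: if $d_y=0$ then $f=ax+b$, its curve is a line in $\mathbb{P}^2$, and (a)--(d) all hold; symmetrically if $d_x=0$. So assume $d_x,d_y\ge 1$. Writing $C_0:=\{f=0\}\subset\mathbb{A}^2$, the reformulation to use is that (c) is equivalent to $C_0\cong\mathbb{A}^1$: indeed $C_0$ is smooth and rational precisely when $C_0$ is an open subset of its smooth projective model $\widetilde{C_0}\cong\mathbb{P}^1$, and the places of the closure $\bar C\subset\mathbb{P}^2$ over the line at infinity correspond to the points of $\widetilde{C_0}\setminus C_0$, so ``smooth rational with a unique place at infinity'' says exactly that $\widetilde{C_0}=\mathbb{P}^1$ with one point removed.

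For (c)~$\Leftrightarrow$~(d): if (d) holds then $C_0=\sigma(\{y=0\})\cong\mathbb{A}^1$, giving (c). Conversely, (c) provides an isomorphism $\mathbb{A}^1\xrightarrow{\ \sim\ }C_0\hookrightarrow\mathbb{A}^2$, i.e.\ a closed embedding of the line, which by the Abhyankar--Moh embedding line theorem \cite{AM} is equivalent to the standard one: there is $\tau\in\Aut(\mathbb{A}^2)$ with $\tau(C_0)=\{y=0\}$. Then $f\circ\tau^{-1}$ is irreducible and vanishes on $\{y=0\}$, so $f\circ\tau^{-1}=\lambda y$ with $\lambda\in\mathbb{K}^*$; post-composing $\tau^{-1}$ with $(x,y)\mapsto(x,y/\lambda)$ gives $\sigma$ with $f\circ\sigma=y$, which is (d).

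For (a)~$\Leftrightarrow$~(c), let $F(X,Y)$ be the bihomogenisation of $f$ and $C\subset\mathbb{P}^1\times\mathbb{P}^1$ its zero locus. The pivotal observation is that monicity of $f$ in $y$ controls $C\cap\{y=\infty\}$: restricting $F$ to $Y_1=0$ gives $F(X;Y_0:0)=\phi(X)\,Y_0^{d_y}$, where $\phi$ is the degree-$d_x$ homogenisation of the $y$-leading coefficient $a_{d_y}(x)$ of $f$; if $f$ is monic then $\phi=c\,X_1^{d_x}$ and $C\cap\{y=\infty\}=\{(\infty,\infty)\}$, while if $f$ is not monic then $\deg a_{d_y}\ge 1$ and $C$ meets $\{y=\infty\}$ over some finite value of $x$. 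Now assume (a). By Corollary~\ref{cmin}, $C$ is rational, has a unique place over $x=\infty$, and is smooth off that place; monicity then forces $C\setminus C_0=C\cap\{x=\infty\}=\{(\infty,\infty)\}$, a single unibranch point, so $C_0$ is smooth rational with $\widetilde{C_0}\cong\mathbb{P}^1$ minus one point, i.e.\ $C_0\cong\mathbb{A}^1$, which is (c). Conversely, assume (c), so $C_0\cong\mathbb{A}^1$, $C$ rational, and $\widetilde{C_0}\setminus C_0$ a single point; were $f$ not monic in $y$, $C$ would meet $\{y=\infty\}$ over a finite $x$, a point distinct from the (nonempty, since $C\cdot\{x=\infty\}=d_y$) locus $C\cap\{x=\infty\}$, forcing $\widetilde{C_0}\setminus C_0$ to have $\ge 2$ points --- a contradiction. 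Hence $f$ is monic, and as before $C$ is rational and smooth outside the unique place over $x=\infty$; Corollary~\ref{cmin} then yields $\deg_x\Delta_y(f)=d_y-1$, which with monicity is (a).

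Finally, since (c) is symmetric in $x$ and $y$, repeating the (a)~$\Leftrightarrow$~(c) argument with $x$ and $y$ swapped gives (b)~$\Leftrightarrow$~(c), and the four statements are equivalent. The only deep ingredient is Abhyankar--Moh, used as a black box. The main obstacle is the bookkeeping comparing the two compactifications $C\subset\mathbb{P}^1\times\mathbb{P}^1$ and $\bar C\subset\mathbb{P}^2$ --- showing that monicity in $y$ is exactly what makes ``a unique place over $x=\infty$'' in the first model coincide with ``a unique place at infinity'' in the second --- together with carefully tracking how points, branches and places behave under normalisation and under restriction to the affine part.
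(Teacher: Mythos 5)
Your proposal is correct and follows essentially the same route as the paper: Corollary \ref{cmin} (i.e.\ Theorem \ref{t1}) to translate minimality into ``rational, unique place over $x=\infty$, smooth elsewhere'', the observation that monicity in $y$ confines $C\cap\{y=\infty\}$ to $(\infty,\infty)$, the compactification-independence of the number of places at infinity to pass between $\mathbb{P}^1\times\mathbb{P}^1$ and $\mathbb{P}^2$, and Abhyankar--Moh for $(c)\Leftrightarrow(d)$. The only organisational difference is that the paper proves $(a)\Rightarrow(b)$ directly, using Lemma \ref{pol} (the Newton-polytope edge) to get monicity in $x$, whereas you obtain $(b)\Leftrightarrow(c)$ by the $x$--$y$ symmetry of $(c)$ and recover monicity via the ``extra boundary point'' argument --- an equally valid, slightly more self-contained variant.
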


Thanks to Jung's Theorem \cite{Jung}, we have an explicit description of the group $\Aut(\mathbb{A}^2)$ of polynomial automorphisms of the plane. Namely, it is generated by the transformations $(x,y)\to (y,x)$ and $(x,y)\to (x,\lambda y +p(x))$ with $\lambda\in \mathbb{K}^*$ and $p\in \mathbb{K}[x]$. Hence Theorem \ref{t3} gives a complete and explicit description of all minimal monic polynomials. 
Note the remarkable fact that for monic polynomials, minimality with respect to $y$ is equivalent to minimality with respect to $x$. This symmetry can be extended to nonmonic polynomials by taking into account the number of roots of the leading coefficients, see Appendix \ref{s3}.

\begin{proof}
$(a)\Rightarrow (b)$. If $d_x=0$, the assertion is trivial. If $d_y=0$ then by the irreducibility assumption, we have $f=ax+b$ for some $a\in\mathbb{K}^*$ and $b\in \mathbb{K}$ so that $(b)$ trivially holds too. Suppose now that $d_y>0$ and $d_x>0$.  By Theorem \ref{t1}, the curve $C\subset \mathbb{P}^1\times \mathbb{P}^1$ defined by $f$ has a unique place $p$ on the line $x=\infty$ and is smooth outside this place. 
Since $f$ is supposed to be monic with respect to $y$ and $d_x>0$, the curve $C$ intersects the line $y=\infty$ at the unique point $(\infty,\infty)$. This forces equality  $p=(\infty,\infty)$. Hence $C$ is rational with a unique place over the line $y=\infty$ and smooth outside this line. Thus $f$ has minimal discriminant with respect to $x$ by Theorem \ref{t1}. Since $C$ has a unique place on the divisor at infinity  $B:=\mathbb{P}^1\times \mathbb{P}^1\setminus \mathbb{A}^2$, usual arguments (see Lemma \ref{pol}) ensure that the Newton polytope of $f$ has an edge that connects the points $(d_x,0)$ and $(0,d_y)$. In particular, $f$ is necessarily monic with respect to $x$.

$(b)\Rightarrow (a)$. Follows by the symmetric roles played by the variables $x$ and $y$.

$(a)\Leftrightarrow (c)$. If $d_y=0$ or $d_x=0$, then the result is trivial. Suppose now that $d_x$ and $d_y$ are positive. We just saw that this is equivalent to the fact that $C$ is rational, with $(\infty,\infty)$ as unique place on the divisor at infinity  $B:=\mathbb{P}^1\times \mathbb{P}^1\setminus \mathbb{A}^2$ and smooth outside this place. The result then follows from the fact that the number of places at the infinity of $\mathbb{P}^2$ is equal to the number of places on the boundary  $B$ of $\mathbb{P}^1\times \mathbb{P}^1$.

$(c)\Leftrightarrow (d)$ This is an immediate consequence of the embedding line theorem  \cite{AM} (see also \cite{S2}).
\end{proof}

\subsection{The monic reducible case. Proof of Theorem \ref{tminimal}.}

\begin{proposition}\label{prop:resultant1}
Let $g, h\in \mathbb{K}[x,y]$ be two monic minimal polynomials. Then 
\[
\Res_y(g,h)\in \mathbb{K}^* \iff h=\mu g+\lambda
\]
for some nonzero constants $\mu,\lambda\in \mathbb{K}^*$. 
\end{proposition}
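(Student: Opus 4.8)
The plan is to think of $g$ and $h$ geometrically as defining curves in $\mathbb{P}^1\times\mathbb{P}^1$ and to exploit the very rigid structure that minimality imposes via Theorem \ref{t1}: each of the curves $C_g$ and $C_h$ is smooth rational with a unique place on the line $x=\infty$. Moreover, since $g$ and $h$ are monic with respect to $y$, Theorem \ref{t3} applies, so after the automorphism-of-$\mathbb A^2$ picture we know each defines a coordinate line; in particular the unique place of $C_g$ (resp. $C_h$) on the boundary $B=\mathbb{P}^1\times\mathbb{P}^1\setminus\mathbb{A}^2$ is the point $(\infty,\infty)$ (this is exactly the Newton-polytope edge statement used in the proof of $(a)\Rightarrow(b)$: the polytope of a monic minimal polynomial has the edge joining $(d_x,0)$ to $(0,d_y)$).

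The direction ($\Leftarrow$) is trivial: if $h=\mu g+\lambda$ then $h$ and $g$ have no common zero in $\mathbb{A}^2$ (a common zero would force $\lambda=0$), and since both are monic in $y$ there is no common zero "at $y=\infty$ over a finite $x$" either, so $\Res_y(g,h)$, which is monic-ish in $x$ up to a constant and vanishes exactly at the $x$-coordinates of common solutions, is a nonzero constant. The substance is ($\Rightarrow$). Assume $\Res_y(g,h)\in\mathbb{K}^*$. First I would handle the degenerate cases $d_y(g)=0$ or $d_y(h)=0$ directly (then that polynomial is $ax+b$ and the resultant condition pins down the other one). So assume both have positive $y$-degree. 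The resultant being a nonzero constant means: (i) $C_g$ and $C_h$ have no common point in $\mathbb{A}^2$, and (ii) they have no common point on the line $y=\infty$ over any finite $x$ — but by monicity in $y$ neither curve even meets $y=\infty$ over finite $x$, so (ii) is automatic. Hence $C_g\cap C_h\subset B$, and since the only boundary point on either curve is $(\infty,\infty)$, the two curves meet only at $(\infty,\infty)$. Now I would compute the intersection number $(C_g\cdot C_h)_{(\infty,\infty)}$ in $\mathbb{P}^1\times\mathbb{P}^1$: by Bézout on $\mathbb{P}^1\times\mathbb{P}^1$ it equals $d_x(g)d_y(h)+d_x(h)d_y(g)$. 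On the other hand each curve is smooth and unibranch at $(\infty,\infty)$, and using the Newton-polytope edge from $(d_x,0)$ to $(0,d_y)$ one reads off the local parametrisation of each branch at $(\infty,\infty)$ in the chart $(u,v)=(1/x,1/y)$: the branch of $C_g$ is $v = (\text{unit})\,u^{d_y(g)/d_x(g)}+\cdots$ after a suitable local coordinate, i.e. its tangent data is governed by the ratio $d_y/d_x$. Comparing, the intersection multiplicity at a single point where both branches are smooth forces $\min$-type estimates that are only compatible with $d_x(g)=d_x(h)$ and $d_y(g)=d_y(h)=1$; that is, both $g$ and $h$ have $y$-degree $1$.

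Once $d_y(g)=d_y(h)=1$, write $g=a(x)y+b(x)$, $h=c(x)y+e(x)$ with $a,c\in\mathbb{K}^*$ constants (monicity), and minimality $\deg_x\Delta_y=d_y-1=0$ is automatic; the real constraint is irreducibility, i.e. $g$ and $h$ are coordinates, which by Theorem \ref{t3}/Abhyankar–Moh just says $b,e$ are arbitrary polynomials. Normalising, $g=y+b(x)$, $h=y+e(x)$ up to constants, and $\Res_y(g,h)=a e(x)-c b(x)$ up to a constant; this lies in $\mathbb{K}^*$ iff $e$ and $b$ are proportional up to an additive constant, which after unwinding the normalisations is exactly $h=\mu g+\lambda$ with $\mu,\lambda\in\mathbb{K}^*$ (the constant $\lambda$ must be nonzero since otherwise $h=\mu g$ is not primitive, or simply $\Res_y(g,\mu g)=0$).

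I expect the main obstacle to be the middle step: turning "$C_g$ and $C_h$ meet only at the single smooth point $(\infty,\infty)$, with total intersection number $d_x(g)d_y(h)+d_x(h)d_y(g)$" into the conclusion "$d_y(g)=d_y(h)=1$". One must control the local intersection multiplicity of the two unibranch smooth germs at $(\infty,\infty)$ in terms of the partial degrees; the cleanest route is probably to use the known shape of the Newton polytope (Lemma \ref{pol}, to which the excerpt alludes) to get an explicit Puiseux parametrisation of each branch at $(\infty,\infty)$, observe that for two smooth branches the intersection number is at least $\min(\cdot)$ of their relevant exponents and is small unless the branches are "parallel", and then note that equality $d_x(g)d_y(h)+d_x(h)d_y(g)$ is simply too large unless both $d_y$'s equal $1$. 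An alternative, possibly slicker, is to argue that $g$ and $h$ monic with $\Res_y(g,h)\in\mathbb{K}^*$ means $\mathbb{K}[x,y]/(g,h)$ is a finite-dimensional $\mathbb{K}$-algebra of dimension $d_y(g)d_y(h)$ supported at one point, combine with $g,h$ being coordinates (so $\mathbb{K}[x,y]=\mathbb{K}[x,g]=\mathbb{K}[x,h]$) to get $\mathbb{K}[x,y]/(g,h)=\mathbb{K}[x]/(\text{image of }h)$ which is reduced of dimension $\deg$ of a one-variable polynomial, forcing $d_y(g)d_y(h)=\deg_x(h\bmod g)$ and then a degree count closes it; I would try both and keep whichever is shorter.
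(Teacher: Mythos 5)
Your forward direction contains a genuine gap: the pivotal claim that $\Res_y(g,h)\in\mathbb{K}^*$ forces $d_y(g)=d_y(h)=1$ is simply false. Take $g=y^2+x$ and $h=y^2+x+1$: both are monic minimal (they are coordinate polynomials), $\Res_y(g,h)=1\in\mathbb{K}^*$, and the proposition holds with $h=g+1$, yet both have $y$-degree $2$. The Bézout count cannot rescue the argument: all of the intersection number $d_x(g)d_y(h)+d_x(h)d_y(g)$ is indeed concentrated at $(\infty,\infty)$, but two unibranch germs there can meet with arbitrarily high multiplicity when they are ``parallel'' (tangent to high order), and this is exactly what happens in the case $h=\mu g+\lambda$ that the proposition is meant to single out — so no min-type estimate on Puiseux exponents can exclude it. (Two smaller inaccuracies: the closures in $\mathbb{P}^1\times\mathbb{P}^1$ are unicuspidal, i.e.\ unibranch but in general \emph{not} smooth at $(\infty,\infty)$; and your fallback sketch fails at the start, since $\Res_y(g,h)\in\mathbb{K}^*$ gives $(g,h)=(1)$, so $\mathbb{K}[x,y]/(g,h)$ is the zero ring, not an algebra of dimension $d_y(g)d_y(h)$, and moreover $\mathbb{K}[x,y]\neq\mathbb{K}[x,g]$ in general for a coordinate $g$.)

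The paper's proof avoids this trap by first transporting the whole situation by an automorphism: by Theorem \ref{t3} there is $\sigma\in\Aut(\mathbb{A}^2)$ with $g\circ\sigma=y$, and the key observation is that the condition $\Res_y(\tilde g,\tilde h)\in\mathbb{K}^*$ is preserved under such $\sigma$ — because $\tilde g$ is again monic minimal, so neither transformed curve meets $\mathbb{A}^1\times\{\infty\}$ at a common point, while common points in $\mathbb{A}^2$ are excluded since $\sigma$ is an automorphism. Only \emph{after} this reduction does one get a degree-one statement: $\tilde h(x,0)=\Res_y(y,\tilde h)\in\mathbb{K}^*$, and Lemma \ref{lemma:stillirreducible} (irreducibility of a coordinate minus a constant) forces $\deg_y\tilde h=1$, hence $\tilde h=\mu y+\lambda=\mu\tilde g+\lambda$; pulling back by $\sigma^{-1}$ gives $h=\mu g+\lambda$. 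The degree collapse happens for the transformed polynomials (note $\Aut(\mathbb{A}^2)$ does not preserve $d_y$), never for $g,h$ themselves, which is precisely where your argument goes wrong. Your backward direction is fine.
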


\begin{proof}
We have $\Res_y(g,h)\in \mathbb{K}^*$ if and only if the curves $C_1, C_2\subset \mathbb{P}^1\times\mathbb{P}^1$ respectively defined by $g$ and $h$ do not intersect in the open set $\mathbb{A}^1\times \mathbb{P}^1$. Let $\sigma\in \Aut(\mathbb{A}^2)$ and let $\tilde{g}=g\circ \sigma$ and $\tilde{h}=h\circ\sigma$. Assume that $\deg_y \tilde{g} >0$. Since $g$ is assumed to be monic minimal, so is $\tilde{g}$ by Theorem \ref{t3}. It follows that the respective curves $\tilde{C}_1$ and $\tilde{C}_2$ of $\tilde{g} $ and $\tilde{h}$ do not intersect in  $\mathbb{A}^1\times \{\infty\}$. Since $C_1$ and $C_2$ do not intersect in $\mathbb{A}^2$ by assumption, the curves $\tilde{C}_1$ and $\tilde{C}_2$ can not intersect in $\mathbb{A}^2$ since $\sigma$ is an automorphism of the plane. Hence $\tilde{C}_1$ and $\tilde{C}_2$ do not intersect in $\mathbb{A}^1\times \mathbb{P}^1$, that is 
\begin{equation}\label{resconstant}
\Res_y(\tilde{g} ,\tilde{h})\in \mathbb{K}^*.
\end{equation}
By Theorem \ref{t3}, there exists $\sigma\in \Aut(\mathbb{A}^2)$ such that $\tilde{g} =y$. Combined with (\ref{resconstant}), this implies that $\tilde{h}(x,0)\in \mathbb{K}^*$. Since $\tilde{h} $ is a coordinate polynomial by Theorem \ref{t3}, Lemma \ref{lemma:stillirreducible} (Appendix \ref{C}) implies that $\tilde{h}(x,y)-\tilde{h}(x,0)$ is irreducible, forcing the equality  $deg_y \tilde{h}=1$. Since $h$ is monic, so is $\tilde{h}$ and the condition $\tilde{h}(x,0)\in \mathbb{K}^*$ implies that $\tilde{h}=\mu y + \lambda=\mu \tilde{g} +\lambda$ for some constant $\mu,\lambda\in \mathbb{K}^*$. The result follows by applying $\sigma^{-1}$. 
\end{proof}

\noindent
\paragraph*{Proof of Theorem \ref{tminimal}.} Let $f$ be a monic separable polynomial with $r$ irreducible factors $f_1,\ldots,f_r$ of respective degrees $d_1,\ldots,d_r$. Corollary \ref{cmin} combined with the multiplicative properties of the discriminant gives the inequality 
\[
\deg_y(\Delta_y(f))= \sum_{i=1}^r \deg_y(\Delta_y(f_i)) + \sum_{i\ne j}^r \deg_y(\Res_y(f_i,f_j)) \ge \sum_{i=1}^r (d_i-1)\ge d_y-r.
\]
Moreover, equality holds if and only if all factors $f_i$ are minimal and satisfy $\Res_y(f_i,f_j)\in \mathbb{K}^*$ for all $i\ne j$. If $f$ is monic, all its factors are also monic. We conclude thanks to Theorem \ref{t3} and Proposition \ref{prop:resultant1} that there exists an automorphism $\sigma\in \Aut(\mathbb{K}^2)$ such that $f\circ \sigma$ is a degree $r$ univariate polynomial. 
Note that $r$ automatically divides $d_y$. $\hfill\square$


\section{$GL_2(\mathbb{K}[x])$-orbits of minimal polynomials}\label{SSnonmonic}

We saw that monic minimal polynomials are particularly easy to describe and construct since they coincide with coordinate polynomials. What can be said for nonmonic minimal polynomials ? Thanks to the relation (\ref{invariance}), an easy way to produce nonmonic minimal polynomials is to let act $G:=GL_2(\mathbb{K}[x])$ on a monic minimal polynomial. It is natural to ask if all nonmonic minimal polynomials arise in such a way. We prove here that the answer is no, a counterexample being given by $f= x(x-y^2)^2-2\lambda y(x-y^2)+\lambda^2$ (Theorem \ref{contrexintro} of the introduction). However, we will show that if we assume that $d_y$ is prime, then the answer is yes (Theorem \ref{nonmonicintro}). Both results will follow from divisibility constraints on the partial degrees of a minimal polynomial (Theorem \ref{reduced} and Theorem \ref{conj}).

\begin{definition}
Let $f, g\in \mathbb{K}[x,y]$ be two irreducible polynomials with partial degrees $\deg_y f>1$ and $\deg_y g>1$. We say that $f$ and $g$ are $G$--equivalent, denoted by $f\equiv g$, if there exists $\sigma\in G$ such that $f=\sigma(g)$, the action of $\sigma$ being defined in (\ref{action}). 
\end{definition}

\subsection{$G$-reduction of minimal polynomials. Proof of Theorem \ref{contrexintro}.}\label{orbits}

In this subsection, we focus on the $G$--reduction of minimal polynomials: what is the 'simplest' form of a polynomial in the $G$--orbit of a minimal one ?

\noindent
\paragraph*{Newton polytope.}
We define the \textit{generic} Newton polytope of $f\in \mathbb{K}[x,y]$ as the convex hull
\[
P(f):=\Conv\Big( (0,0)\cup (0,d_y) \cup \Supp(f)\Big),
\]
where $\Supp(f)$ stands for the support of $f$, {\it i.e} the set of exponents that appear in its monomial expansion. It is well known that the edges of the generic polytope that do not pass throw the origin give information about the singularities of $f$ at infinity. In our context, we have the following lemma:

\begin{lemma}\label{pol}
Suppose that $f$ is minimal. Then 
\[
P(f):=\Conv\Big( (0,0),(0,d_y),(b,d_y),(0,a)\Big)
\] 
for some integers $a, b$. 
\end{lemma}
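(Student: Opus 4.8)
\emph{Approach.} The plan is to translate minimality into a constraint on the branches of the projective model $C\subset\mathbb{P}^1\times\mathbb{P}^1$ of $f$ along the toric boundary $B=\mathbb{P}^1\times\mathbb{P}^1\setminus\mathbb{A}^2$, and then to read off the shape of $P(f)$ from the classical correspondence between the edges of the generic Newton polytope and the branches of $C$ at infinity. The starting point is Corollary \ref{cmin}: it tells us that $C$ is rational, is smooth off a single point $p_\infty$ on the line $x=\infty$, and carries one branch at $p_\infty$; in particular $C$ has \emph{exactly one} branch among all points of the line $x=\infty$. This ``uniqueness over $x=\infty$'' is the only input we will need.

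\emph{Set-up.} First I would dispose of the degenerate case $d_x=0$: irreducibility then forces $f=\alpha y+\beta$ with $\alpha\in\mathbb{K}^*$, so $P(f)=\Conv((0,0),(0,d_y))$ and the statement holds with $a=b=0$. (Note that $d_y\ge1$ always, since $\deg_x\Delta_y(f)=d_y-1\ge0$.) So assume $d_x\ge1$; then $y\nmid f$, hence $f(x,0)\ne0$, and I set $a:=\deg_x f(x,0)$ and $b:=\deg_x(\lc_y f)$, where $\lc_y f\in\mathbb{K}[x]$ is the leading coefficient of $f$ regarded as a polynomial in $y$. These are well-defined nonnegative integers with $(a,0),(b,d_y)\in\Supp(f)$. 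Since $(0,0),(0,d_y)\in P(f)$ by definition and $P(f)\subset[0,d_x]\times[0,d_y]$, one checks at once that the left edge of $P(f)$ is $\{0\}\times[0,d_y]$, the bottom edge is $[0,a]\times\{0\}$, and the edge of outer normal $(0,1)$ is $[0,b]\times\{d_y\}$ (reduced to the point $(0,d_y)$ when $b=0$).

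\emph{Main step.} Listing the edges of the convex polygon $P(f)$ clockwise by outer normal, starting from $(-1,0)$, and using that $P(f)$ lies in the first quadrant and contains the whole left and bottom edges, no edge can have outer normal $(-p,q)$ or $(-p,-q)$ with $p>0$; hence the boundary is: the left edge, then (if $b>0$) the top edge, then a chain $E_1,\dots,E_s$ of edges whose outer normals have strictly positive first coordinate, then the bottom edge. Now the key is the dictionary: in the appropriate toric chart of $\mathbb{P}^1\times\mathbb{P}^1$ around each boundary stratum, an edge $E_i$ of positive lattice length, with face polynomial $f_{E_i}$, forces at least one branch of $C$ on the line $x=\infty$ — at a finite nonzero point $(\infty,\gamma)$, $\gamma$ a root of $f_{E_i}$, if the outer normal of $E_i$ is $(1,0)$; at $(\infty,\infty)$ if it is $(p,q)$ with $p,q>0$; at $(\infty,0)$ if it is $(p,-q)$ with $p,q>0$ — while distinct edges force distinct branches, since they sit at distinct points of $x=\infty$ unless two of them point at the same corner, in which case their different slopes yield different leading Puiseux exponents. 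The top edge, of outer normal $(0,1)$, only produces branches at points $(\gamma,\infty)$ with $\gamma\in\mathbb{K}$, off the line $x=\infty$, so it imposes nothing. Since $C$ has a unique branch over $x=\infty$, this gives $s\le1$; and $s\ge1$, for otherwise the endpoint $(b,d_y)$ of the top edge would have to coincide with the endpoint $(a,0)$ of the bottom edge, forcing $d_y=0$. Hence $s=1$, the single edge $E_1$ runs from $(b,d_y)$ to $(a,0)$, and the boundary of $P(f)$ is the closed path $(0,0)\to(0,d_y)\to(b,d_y)\to(a,0)\to(0,0)$, that is
\[
P(f)=\Conv\big((0,0),(0,d_y),(b,d_y),(a,0)\big),
\]
with $a=\deg_x f(x,0)$ and $b=\deg_x(\lc_y f)$, as asserted.

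\emph{Where the work is.} The one substantial ingredient is the dictionary invoked in the main step — that every nontrivial edge of the ``$p>0$'' chain of $P(f)$ forces a branch of $C$ on the line $x=\infty$, and that edges of different slopes force genuinely different branches. Making this precise amounts to a careful Newton--Puiseux analysis of $f$ in the three toric charts of $\mathbb{P}^1\times\mathbb{P}^1$ around $x=\infty$, $(\infty,0)$ and $(\infty,\infty)$, keeping track of which edge of $P(f)$ governs each stratum; the remaining steps are routine convexity bookkeeping.
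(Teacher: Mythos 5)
Your proposal is correct and follows essentially the same route as the paper: Corollary \ref{cmin} supplies the unique place of $C$ over the line $x=\infty$, and the Newton--Puiseux correspondence between the edges of the (generic) Newton polytope with outer normal of positive first coordinate and the branches of $C$ along $x=\infty$ then forces that chain to consist of a single edge; your argument simply spells out the convexity bookkeeping and the dictionary that the paper delegates to the Newton--Puiseux Factorisation Theorem. Note that you have (correctly) read the vertex $(0,a)$ in the statement as $(a,0)$, in agreement with Figure 1 and the subsequent use of $a,b$ as the $x$-degrees of the constant and leading coefficients.
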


\begin{proof}
Since $f$ has a unique place along $x=\infty$, the claim follows from Newton-Puiseux Factorisation Theorem applied along the line $x=\infty$. See for instance {\cite[Chap.\,6]{Cas}}.
\end{proof}

The integers $a=a(f)$ and $b=b(f)$ of Lemma \ref{pol} coincide with 
the respective degrees in $x$ of the constant and leading coefficients of $f$ with respect to $y$. 
Thanks to the previous lemma, we have the relation 
\[
d_x=\max(a,b)
\]
for any  minimal polynomial $f$ and we define the integer $c=c(f)$ as
\[
c:=\min (a,b).
\]
We say that $f$ is in \textit{normal position} if $b\le a$, that is if  $(d_x,c)=(a,b)$. 

\begin{figure}[h]
    \begin{center}
      \resizebox{9.5cm}{!}{\input{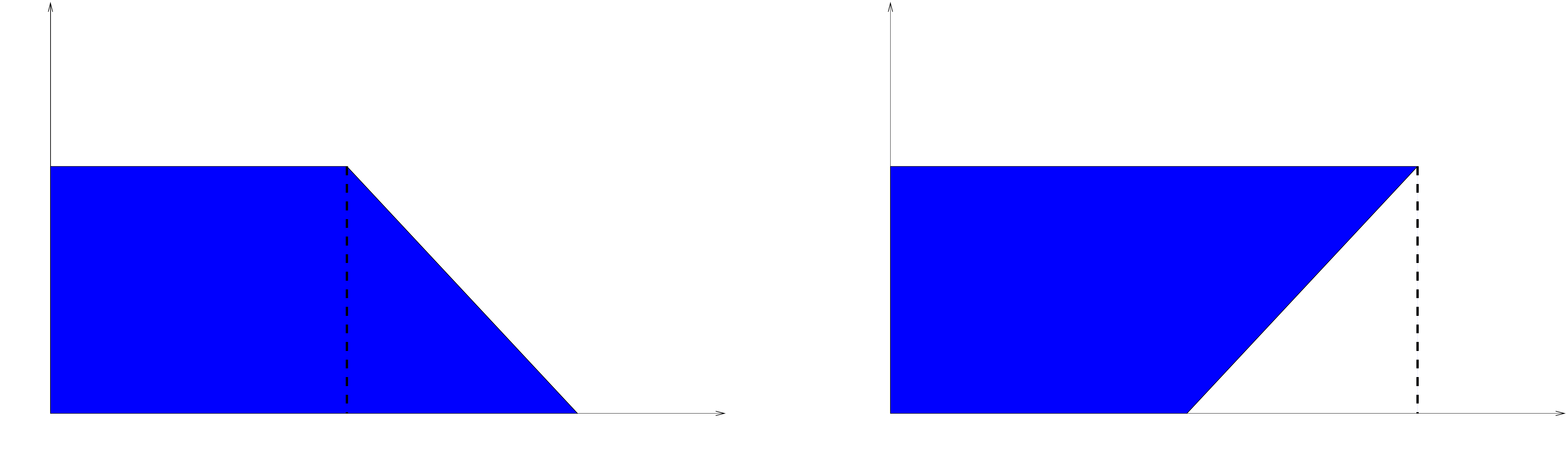_t}}
    \end{center}  
    \caption{The generic Newton polytopes of a minimal polynomial in normal and non normal position.} 
\end{figure}

\paragraph*{Reduced minimal polynomials.}
We can enounce now our main result about $G$--reduction of minimal polynomials. Given $n:\mathbb{K}[x,y]\to \mathbb{Q}^+$ and $f\in \mathbb{K}[x,y]$, we define 
\[
n_{\min} (f):=\inf\{n(g),\,\,g\equiv f\}.
\]

\begin{theorem}\label{reduced}
Let $f$ be a minimal polynomial with parameters $(d_y,d_x,c)$. Denote by $V$ the euclidean volume of $P(f)$. Suppose that $d_y\ge 2$. Then the following assertions are equivalent:
\begin{enumerate}
\item $V=V_{\min}$
\item $d_x=d_{x,\min}$ and $c=c_{\min}$
\item $d_y$ does not divide $d_x-c$.
\end{enumerate}
\end{theorem}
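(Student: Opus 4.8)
The plan is to exploit the action of $G=GL_2(\mathbb{K}[x])$ on the Newton polytope via elementary transformations and to track how the parameters $(d_y,d_x,c)$ change. First I would reduce to understanding the effect of a single elementary matrix $\tau_p=\begin{pmatrix}1&p\\0&1\end{pmatrix}$ with $p\in\mathbb{K}[x]$, together with the "swap" $w=\begin{pmatrix}0&1\\1&0\end{pmatrix}$ and diagonal (unit) scalings, since these generate $G$ modulo $\mathbb{K}[x]^*$-scalars by the Euclidean algorithm in $\mathbb{K}[x]$; the scalars and $w$ do not change $V$, while $w$ interchanges the roles of $a$ and $b$, hence fixes the pair $(d_x,c)=(\max(a,b),\min(a,b))$. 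So the whole content is the behaviour of $\tau_p$. Using Lemma \ref{pol}, a minimal $f$ in normal position has $P(f)=\Conv((0,0),(0,d_y),(c,d_y),(0,d_x))$, so the two "outer" edges have slopes governed by $c/d_y$ (top edge) and $d_x/d_y$ (the long slanted edge from $(0,d_x)$ to $(d_y$-vertex$)$, more precisely the edge joining $(d_x,0)$-type data). The key computation is: applying $\tau_p$ to the dehomogenised $f$, i.e. $f(x,y)\mapsto f(x,y+p(x))$ up to the homogeneous normalisation, moves the support, and the resulting polynomial is again minimal (minimality being visibly $G$-invariant by \eqref{invariance}), hence again has a polytope of the shape in Lemma \ref{pol}; one computes its new parameters $(d_y,d_x',c')$ in terms of $\deg p$ and the old ones. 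The crucial arithmetic fact I expect to extract is that $d_x'-c'\equiv \pm(d_x-c)\pmod{d_y}$ is \emph{not} what happens; rather, the reachable values of $d_x-c$ in the orbit form the set $\{\,|d_x-c - k d_y| : k\in\mathbb{Z}_{\ge 0}\,\}$ intersected with the admissible region, so the minimum of $d_x-c$ over the orbit is the residue of $d_x-c$ modulo $d_y$ taken in $\{0,1,\dots,d_y-1\}$ when it can be achieved, and it is $0$ exactly when $d_y\mid d_x-c$ — but one must check that when the residue is nonzero the reduction genuinely stops (no further decrease is possible), which is where the "unique place at infinity" / Newton–Puiseux rigidity of Lemma \ref{pol} is used to forbid exotic edges.

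\textbf{Key steps in order.} (i) Record that $V$, $d_x$, $c$ are all nonincreasing under a single well-chosen $\tau_p$ and that any $\sigma\in G$ is a product of such $\tau_p$'s, scalars and $w$; deduce $V_{\min}$, $d_{x,\min}$, $c_{\min}$ are simultaneously attained by iterating a greedy reduction. (ii) Compute $V=d_x d_y - \tfrac12 d_y(d_x-c) = \tfrac12 d_y(d_x+c)$ from the trapezoidal polytope of Lemma \ref{pol}; conclude that among orbit representatives of \emph{minimal} polynomials (all of which have such a trapezoid, hence $V=\tfrac12 d_y(d_x+c)$), minimising $V$ is equivalent to minimising $d_x+c$, which — since $d_x\ge c\ge 0$ and $d_x$ is already bounded below by the intersection number / degree constraints — is equivalent to $(d_x,c)=(d_{x,\min},c_{\min})$; this gives $(1)\Leftrightarrow(2)$ almost formally once one knows the two minima are compatible. (iii) For $(2)\Leftrightarrow(3)$: show the reduction step $\tau_p$ with $\deg p$ chosen to kill the top coefficient's $x$-degree replaces $(d_x,c)$ with something whose difference $d_x-c$ drops by a multiple of $d_y$, precisely $|d_x - c - d_y\deg p|$ up to swapping $a\leftrightarrow b$; iterate to reach the residue class minimum. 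Then argue: the reduction can be continued to reach $d_x-c=0$ (equivalently $a=b$, i.e. $V$ strictly smaller, i.e. not yet minimal) \emph{iff} $d_y\mid d_x-c$; and conversely if $d_y\nmid d_x-c$ the greedy reduction terminates at $0<d_x-c<d_y$, and no element of $G$ can do better because any $\tau_p$ either increases $\max(a,b)$ or changes $d_x-c$ only by multiples of $d_y$ (here is the one genuine lemma to prove about how $\tau_p$ acts on the two relevant edge slopes, using that both $f$ and $\tau_p(f)$ are minimal so their polytopes are the trapezoids of Lemma \ref{pol}, leaving no room for intermediate vertices).

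\textbf{Main obstacle.} The hard part will be step (iii), specifically proving that a single $\tau_p$ changes $d_x-c$ only by an integer multiple of $d_y$ and never strictly decreases $\max(a,b)$ once $d_x-c<d_y$. A priori, applying $f(x,y)\mapsto f(x,y+p(x))$ could create a polytope that is not of trapezoidal shape — but \emph{a posteriori} it must be, because the transformed polynomial is still minimal and Lemma \ref{pol} applies to it; so the real work is to combine the forced trapezoidal shape of \emph{both} source and target with an explicit leading-term computation of $f(x,y+p(x))$ (tracking the coefficient of $y^{d_y}$, of $y^0$, and the slanted edge) to pin down exactly how $(a,b)$ transforms. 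This is essentially a discrete/Newton-polygon version of the argument Wightwick uses for $\Aut(\mathbb{C}^2)$-orbits, cited in the introduction, and I would model the bookkeeping on that: classify the possible "moves" on the pair $(a,b)$ as $(a,b)\mapsto(a, b')$ with $b'=a-d_y\deg p$ (when $\deg p$ is small enough to stay admissible) or $(a,b)\mapsto(b,a)$, show these generate a monoid whose orbit of $a-b$ under $|\cdot|$ is the residue class of $d_x-c$ mod $d_y$, and read off that $\min$ is $0$ precisely in the divisibility case. Once this combinatorial lemma is in place, $(1)\Leftrightarrow(2)\Leftrightarrow(3)$ follow by assembling (i)–(iii).
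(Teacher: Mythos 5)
Your overall strategy is the paper's (Newton-polytope bookkeeping under generators of $G$, with the trapezoid rigidity of Lemma \ref{pol} applied to both source and target, in the spirit of Wightwick), and two of the three implications in your plan are essentially sound: the reduction move giving $(1)\Rightarrow(3)$ (when $d_y\mid d_x-c$ a substitution $y\mapsto y+\gamma x^{q}$, $q=(d_x-c)/d_y$, kills the vertex $(a,0)$ and strictly lowers $V$; this is Corollary \ref{c13}, via Lemma \ref{facette}), and $(2)\Rightarrow(1)$, which is immediate from $V=d_y(d_x+c)/2$. The genuine gap is $(3)\Rightarrow(2)$: you must show that once $d_y\nmid d_x-c$, \emph{no} element of $G$ — not just a single transvection — can lower $d_x$ or $c$. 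Your justification is a greedy reduction plus a single-move statement (``any $\tau_p$ either increases $\max(a,b)$ or changes $d_x-c$ only by multiples of $d_y$''), but single-move statements do not exclude a word in the generators that first increases the parameters and later drops below the starting values; moreover you mostly track the difference $a-b$, whereas the theorem asserts minimality of $d_x$ and $c$ (hence of $a+b$) separately. The paper closes exactly this hole with Nagao's theorem: $G$ is the amalgamated product of the De Jonqui\`eres subgroup $U$ and $GL_2(\mathbb{K})$, so every $\sigma$ has an alternating normal form $\sigma_n\tau\cdots\tau\sigma_1$ with $\deg\sigma_i>0$ in the middle, and Proposition \ref{sequence} proves by induction along this form that, starting from a polynomial in normal position with $d_y\nmid d_x-c$, the parameters $(d_i,c_i)$ never decrease and strictly increase in the middle of the word; the mechanism (Lemma \ref{sigma}) is that a nontrivial move forces $d_y\mid d_x-c$ for the image and pushes it out of the position from which any later decrease could start. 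Some such normal-form induction is indispensable; without it ``greedy reduction reaches the orbit minimum'' is unjustified.

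In addition, the explicit combinatorial model you intend to use is incorrect, so the ``monoid of moves'' computation would not go through as written. By Lemma \ref{sigma}, a De Jonqui\`eres move $y\mapsto\lambda y+h(x)$ of degree $k$ fixes $b$ (the leading coefficient is only scaled) and sends $a\mapsto\max(b+kd_y,a)$ when $f$ is in normal position with $d_y\nmid a-b$; it is not of the form $(a,b)\mapsto(a,\,a-d_y\deg p)$. Consequently, from a reduced polynomial the new difference is $\max(kd_y,\,d_x-c)$, which for $k\ge1$ is a multiple of $d_y$ and hence \emph{leaves} the residue class of $d_x-c$; this contradicts both your claim that $d_x-c$ changes only by multiples of $d_y$ and your description of the reachable differences as $\{\,|d_x-c-kd_y| : k\ge 0\,\}$. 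For the paper's example $(d_y,d_x,c)=(4,3,1)$ the reachable differences are $2$ together with positive multiples of $4$, not $2,6,10,\dots$. So the arithmetic you plan to ``read off'' is not the arithmetic of the actual action; the correct statement is not a residue-class invariance but the monotonicity of Proposition \ref{sequence}, from which $(3)\Rightarrow(2)$ follows.
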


\begin{definition}
We say that $f$ is \textit{reduced} if it is minimal and satisfies one of the equivalent conditions of Theorem \ref{reduced}. 
\end{definition}

The remaining part of this subsection is dedicated to the proof of Theorem \ref{reduced}.

\paragraph*{The characteristic polynomial.} It turns out that Newton-Puiseux Theorem gives strong information about the edge polynomial of $f$ attached to the right hand side of $P(f)$. Namely, we have:

\begin{lemma}\label{facette}
Suppose that $f$ is minimal with parameters $(d_y,a,b)$. Then  \begin{equation}\label{eqfacette}
\displaystyle f(x,y)=x^b(\alpha y^p+\beta x^q)^n+\sum_{\substack{j\le pn \\ pi+qj<pqn+np}} c_{ij} x^i y^j
\end{equation}
where $p\in \mathbb{N}^*$ and $q\in \mathbb{Z}$ are coprime integers such that
\begin{equation}\label{eqab}
pn=d_y,\quad qn=a-b,
\end{equation}
where $\alpha,\beta \in \mathbb{K}^*$. We call the polynomial $f_{\infty}:=(\alpha y^p+\beta x^q)^n$ the characteristic polynomial of $f$ at $x=\infty$. 
\end{lemma}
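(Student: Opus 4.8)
The plan is to extract the edge polynomial of $f$ attached to the right-hand edge of the generic Newton polytope $P(f)$ — the edge joining $(b,d_y)$ to $(a,0)$ in normal position (or its mirror) — and to show, using the fact that $f$ has a unique place along $x=\infty$, that this edge polynomial is a perfect power of a binomial. By Lemma \ref{pol} we already know $P(f)$ has vertices $(0,0),(0,d_y),(b,d_y),(a,0)$; write $\ell$ for the edge from $(b,d_y)$ to $(a,0)$ (assuming normal position $b\le a$, the nonnormal case being symmetric) and $f_\ell$ for the sum of the terms $c_{ij}x^iy^j$ of $f$ whose exponents lie on $\ell$. First I would record that every monomial of $f$ satisfies $p\,i+q\,j\le p q n + n p$ for the appropriate coprime $(p,q)$ determined by the slope of $\ell$, with equality exactly on $\ell$; this is just the statement that $\ell$ is a supporting line, and it gives the summation condition $j\le pn$, $pi+qj<pqn+np$ for the error terms once we know $f_\ell=x^b(\alpha y^p+\beta x^q)^n$.

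The heart of the argument is the identification $f_\ell = x^b(\alpha y^p + \beta x^q)^n$ with $pn = d_y$, $qn = a-b$. I would obtain this from the Newton–Puiseux analysis of $f$ near $x=\infty$, exactly as invoked in the proof of Lemma \ref{pol}. Dehomogenise at $x=\infty$: put $x = 1/s$ and clear denominators, so that the branches of $C$ over $x=\infty$ are described by Puiseux series $y = y(s)$. Because $C$ has a \emph{unique} place over $x=\infty$ (Corollary \ref{cmin}), there is a single Puiseux cycle, hence a single Newton–Puiseux edge governing the leading behaviour, and the associated characteristic (edge) polynomial must be a power of a single linear factor in the relevant local coordinate — any factorisation into coprime pieces would produce distinct branches, contradicting unicity of the place. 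Translating back to the $(x,y)$ picture, the edge polynomial $f_\ell$, which is quasi-homogeneous of weighted degree $pqn+np$ for the weights $(q,p)$, factors over $\overline{\mathbb{K}}$ into linear quasi-homogeneous factors $\alpha_k y^p - \gamma_k x^q$; unicity of the place forces all the $\gamma_k$ equal, so $f_\ell = x^b(\alpha y^p+\beta x^q)^n$ for a single pair $(\alpha,\beta)$, with $\alpha,\beta\neq 0$ since $(b,d_y)$ and $(a,0)$ are genuine vertices of $P(f)$. The numerology $pn=d_y$, $qn=a-b$ then just reads off the endpoints of $\ell$: the $y$-degree of $f_\ell$ is $d_y=pn$ and the horizontal displacement along $\ell$ is $a-b=qn$, with $\gcd(p,q)=1$ by construction of the primitive edge direction. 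This yields precisely (\ref{eqfacette}) and (\ref{eqab}).

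I expect the main obstacle to be making the Newton–Puiseux step fully rigorous in the ``at infinity'' chart, i.e.\ correctly matching the combinatorics of the edge $\ell$ of $P(f)$ with the classical Newton polygon of the local equation of $(C,p)$ at the point $p$ on $x=\infty$, and checking that a \emph{unique place} (as opposed to unibranch but with several Puiseux pairs) really does force the top Newton–Puiseux edge polynomial to be a pure power of a binomial rather than merely having a single slope. The cleanest way around this is to argue at the level of the characteristic exponents: a unibranch germ has a well-defined first characteristic pair $(p,q)$ (in lowest terms) and its first edge polynomial is automatically $(\alpha y^p+\beta x^q)^n$ where $n$ is the multiplicity divided by $p$; one then only has to transport this statement through the birational change of charts relating $\mathbb{P}^1\times\mathbb{P}^1$ near $x=\infty$ to the local chart, which preserves the shape of the top-weight part. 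I would cite \cite{Cas}, Chapter 6, for the precise form of this statement, and otherwise the proof is a matter of bookkeeping with the weights $(q,p)$ and the vertices of $P(f)$ supplied by Lemma \ref{pol}.
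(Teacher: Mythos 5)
Your proposal is correct and follows essentially the same route as the paper: the paper's proof is just the observation that, by Corollary \ref{cmin}, $C$ has a unique place along $x=\infty$, so the Newton--Puiseux theorem (citing \cite[Chap.~6]{Cas}) forces the right-hand edge polynomial of $P(f)$ to be $x^b$ times the power of an irreducible quasi-homogeneous polynomial, i.e.\ $x^b(\alpha y^p+\beta x^q)^n$. Your extra bookkeeping (factoring the quasi-homogeneous edge polynomial into binomials and using unicity of the place to force a single factor, then reading $pn=d_y$, $qn=a-b$ off the endpoints) is exactly the detail the paper leaves to the citation.
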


\begin{proof}
By Corollary \ref{cmin}, the Zariski closure in $\mathbb{P}^1\times \mathbb{P}^1$ of the curve defined by $f$ has a unique place along the line $x=\infty$. Thus, it  follows once again from the Newton-Puiseux Theorem applied along the line $x=\infty$ that the edge polynomial attached to the right hand edge of $P(f)$ is of the form $x^b g(x,y)$ where $g$ is the power of an irreducible quasi-homogeneous polynomial {\cite[Chap.\,6]{Cas}}. 
\end{proof}

\begin{corollary}\label{c13}
If $V=V_{\min}$, then $d_y$ does not divide $d_x-c$.
\end{corollary}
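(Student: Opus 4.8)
The plan is to argue by contraposition: assuming that $d_y$ divides $d_x-c$, we produce an element of the $G$-orbit of $f$ with strictly smaller Newton polytope, so that $V \ne V_{\min}$. The starting point is Lemma \ref{facette}. Since $d_y \mid d_x - c = |a-b|$, the equation $qn = a-b$ together with $pn = d_y$ and $\gcd(p,q)=1$ forces $p = 1$ (indeed $p$ divides both $d_y$ and $a-b$, hence divides $\gcd$, but after dividing out we need $\gcd(p,q)=1$; the divisibility $d_y \mid a-b$ makes $n = d_y$, $p=1$). Thus the characteristic polynomial is $f_\infty = (\alpha y + \beta x^q)^{d_y}$ with $q = (a-b)/d_y \in \mathbb{Z}$; assume $f$ is in normal position so $a \ge b$, $q \ge 0$, $d_x = a$, $c = b$.

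The key step is then to apply the elementary transformation $\sigma \in G = GL_2(\mathbb{K}[x])$ corresponding (after dehomogenisation) to $y \mapsto y - (\beta/\alpha) x^q$ — concretely $\sigma = \begin{pmatrix} 1 & -(\beta/\alpha)x^q \\ 0 & 1\end{pmatrix}$ acting via (\ref{action}). Writing $\tilde f = \sigma(f)$, I expect that the term $x^b(\alpha y + \beta x^q)^{d_y}$ becomes $x^b \alpha^{d_y} y^{d_y}$, which now sits on the line $i = b$, while by the degree constraint $pi + qj < pqn + np$ in Lemma \ref{facette} — i.e. $i + q j < q d_y + d_y$ — the remaining monomials $c_{ij}x^iy^j$ are shifted to exponents $(i', j)$ with $i' = i + q(d_y - j)$, and the strict inequality guarantees $i' < b + q d_y - q\cdot 0$ hmm — more precisely one checks $i' < a$ for every monomial below the top edge, so the transformed polynomial has leading coefficient in $y$ of $x$-degree exactly $b < a = d_x$. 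Hence $d_x(\tilde f) < d_x(f)$, and since $\tilde f$ is again minimal (degrees and the discriminant degree are $G$-invariant, and $d_y$ is unchanged), we get $d_{x,\min} \le d_x(\tilde f) < d_x(f)$, so $d_x \ne d_{x,\min}$. By the equivalence (1)$\Leftrightarrow$(2) of Theorem \ref{reduced} — whose proof is presumably being built up through these lemmas, so strictly I should invoke only the part already available, namely that $V_{\min}$ is attained where $d_x$ and $c$ are both minimal, or else argue directly that a polytope with smaller $d_x$ and the same $d_y$ has smaller volume — this contradicts $V = V_{\min}$.

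The main obstacle I anticipate is the bookkeeping on the Newton polytope: one must verify that after the substitution $y \mapsto y - (\beta/\alpha)x^q$ no monomial of $f$ produces an exponent with $x$-degree $\ge a$, which is exactly where the strict inequality $pi + qj < pqn + np$ in (\ref{eqfacette}) and the bound $j \le pn = d_y$ get used; a monomial $x^i y^j$ with $j < d_y$ contributes, among others, the term proportional to $x^{i + q(d_y-j)} y^{j}$, hmm, careful — it contributes $x^{i + q(d_y - j)}$ is wrong, expanding $(y - (\beta/\alpha)x^q)^{\,?}$ — rather each $x^i y^j$ becomes $x^i(y + (\beta/\alpha)x^q)^j$ after we re-expand, contributing $x^{i + q(j-k)}y^{k}$ for $0 \le k \le j$, and the worst case $k=0$ gives $x$-degree $i + qj$, which by the constraint satisfies $i + qj < qd_y + d_y = qd_y + b + (d_y - b)$; so one needs $i + qj \le a - 1 = b + qd_y - 1$, i.e. precisely $i + qj < b + qd_y$, and since $pi+qj < pqn+np$ with $p=1$ reads $i + qj < qd_y + d_y$ — this is slightly weaker than needed when $d_y > b$, so I would instead use the full shape $P(f) = \Conv((0,0),(0,d_y),(b,d_y),(a,0))$ from Lemma \ref{pol}, whose only edge not through the origin with positive slope data is the segment from $(a,0)$ to $(b,d_y)$, on which the points satisfy $i + qj = a$; thus every monomial of $f$ has $i + qj \le a$ with equality only on that edge, which collapses under $\sigma$ to the vertical segment $i = b$, and all other monomials land strictly inside, giving $d_x(\tilde f) = b$. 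That polytope argument is cleaner and is the step I would write out with care.
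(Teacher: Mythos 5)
Your argument is essentially the paper's own proof: reduce to normal position, observe via Lemma \ref{facette} that $d_y \mid d_x - c$ forces $p=1$, apply the shear $y \mapsto y - (\beta/\alpha)x^q$, and conclude that the Newton polytope strictly shrinks, contradicting $V = V_{\min}$; your use of the polytope shape from Lemma \ref{pol} in place of the displayed inequality in (\ref{eqfacette}) is the right way to control the support (note also that "assume normal position" should be justified by Lemma \ref{tau}, which fixes $d_y,d_x,c,V$ and swaps $a,b$). Two points in the write-up need repair, though neither breaks the argument. First, your final claim $d_x(\tilde f)=b$ is too strong: a below-edge monomial $x^iy^j$ contributes after the shear terms of $x$-degree up to $i+qj$, which is only bounded by $a-1$ and may well exceed $b$; moreover, in the degenerate case $a=b$ (where $q=0$ and $d_y\mid d_x-c=0$ trivially) one has $b(\tilde f)=b=a$, so even the intermediate claim $d_x(\tilde f)<d_x(f)$ fails. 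What your computation actually yields --- and all that is needed --- is $b(\tilde f)=b$ and $a(\tilde f)\le a-1$, whence by Lemma \ref{pol} one gets $V(\tilde f)=d_y\bigl(a(\tilde f)+b(\tilde f)\bigr)/2 < d_y(a+b)/2 = V(f)$, contradicting $V=V_{\min}$; this is exactly how the paper concludes. Second, as you yourself suspected, you cannot invoke the equivalence $(1)\Leftrightarrow(2)$ of Theorem \ref{reduced}, since this corollary is an ingredient of its proof; so the direct volume computation is not an optional alternative but the required ending.
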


\begin{proof}
By Lemma \ref{tau} below, the parameters $(d_x,d_y,c,V)$ are invariant under the inversion $\tau$ while the parameters $(a,b)$ are permuted. Hence there is no loss to suppose that $f$ is in normal position, that is $(d_x,c)=(a,b)$. By (\ref{eqab}) in Lemma \ref{facette}, we get that $q\ge 0$ and that $d_y$ divides $d_x-c$ if and only if $p=1$. In such a case, the polynomial
\[
g(x,y):=f(x,y-\beta/\alpha x^q) 
\]
satisfies $b(g)=b(f)$ and $a(g)< a(f)$. Since $g$ is equivalent to $f$, it is also minimal of partial degree $d_y$, and we deduce from Lemma \ref{pol} that
\[
V(g)=\frac{d_y(a(g)+b(g))}{2} < V(f)=\frac{d_y(a(f)+b(f))}{2}.
\]
The corollary follows. 
\end{proof}

\noindent
\paragraph*{Basic transformations.} Let us first study the behaviour of the parameters $d_x$ and $c$ under the inversion and the polynomial De Jonqui\`eres transformations.
We define the inversion $\tau\in G$ by
\[
\tau(f):=y^{d_y} f(x,1/y).
\]
We have the following obvious lemma:

\begin{lemma}\label{tau}
Let $f\in \mathbb{K}[x,y]$ not divisible by $y$. The parameters $d_y,d_x,c$ are invariant by $\tau$ and the parameters $a$ and $b$ are permuted. 
\end{lemma}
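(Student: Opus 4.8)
The plan is to read off the effect of $\tau$ directly from the monomial expansion of $f$ in $y$. First I would write $f=\sum_{j=0}^{d_y}a_j(x)\,y^j$ with $a_{d_y}\neq 0$, and observe that the hypothesis $y\nmid f$ says exactly that $a_0\neq 0$. Applying $\tau$ term by term yields
\[
\tau(f)=y^{d_y}\sum_{j=0}^{d_y}a_j(x)\,y^{-j}=\sum_{j=0}^{d_y}a_j(x)\,y^{d_y-j}=\sum_{k=0}^{d_y}a_{d_y-k}(x)\,y^k,
\]
which is again a polynomial (all exponents $d_y-j\ge 0$) whose coefficient of $y^k$ is $a_{d_y-k}(x)$. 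Equivalently, on Newton polytopes $\tau$ acts by the reflection $(i,j)\mapsto(i,d_y-j)$.

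Next I would extract the four parameters from this formula. The leading coefficient in $y$ of $\tau(f)$ is $a_0(x)\neq 0$, so $\deg_y\tau(f)=d_y$ (hence $d_y$ is $\tau$-invariant, and $\tau$ does map the set of polynomials under consideration to itself; note also $\tau^2=\mathrm{id}$, $\tau$ being induced by the matrix $\left(\begin{smallmatrix}0&1\\1&0\end{smallmatrix}\right)\in G$) and $b(\tau(f))=\deg_x a_0=a(f)$. The constant coefficient in $y$ of $\tau(f)$ is $a_{d_y}(x)\neq 0$, so $y\nmid\tau(f)$ and $a(\tau(f))=\deg_x a_{d_y}=b(f)$. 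Finally $d_x(\tau(f))=\max_k\deg_x a_{d_y-k}=\max_j\deg_x a_j=d_x(f)$, and $c(\tau(f))=\min\bigl(a(\tau(f)),b(\tau(f))\bigr)=\min\bigl(b(f),a(f)\bigr)=c(f)$.

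Combining these, $d_y$, $d_x$ and $c$ are fixed by $\tau$ while $(a,b)$ is transposed, which is the assertion. There is no real obstacle: the only step deserving a remark is checking that $\tau$ preserves the class of polynomials for which the parameters are defined, i.e. that $\tau(f)$ is still not divisible by $y$ and still has $y$-degree $d_y$; both follow immediately from $a_0\neq 0$ and $a_{d_y}\neq 0$, and the rest is bookkeeping on the support of $f$.
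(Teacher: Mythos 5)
Your proof is correct and matches the paper's argument: the paper merely notes that $d_y$ could drop under $\tau$ only if $y$ divides $f$ and declares the rest straightforward, while you spell out the same coefficient-reversal $a_j\mapsto a_{d_y-j}$ and read off the parameters from it. No issues.
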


\begin{proof} It is straightforward to check that $d_y(g)<d_y(f)$ if and only if $f(x,y)=y^k h(x,y)$ with $k>0$, which is excluded by hypothesis. The remaining part of the lemma is straightforward.
\end{proof}

Let $U\subset G$ stands for the polynomial De Jonqui\`eres subgroup of $G$, that is the subgroup of transformations $\sigma$ of type
\[
\sigma(f):(x,y)\longmapsto f(x,\lambda y + h(x)),
\] 
where $\lambda\in \mathbb{K}^*$ and $h\in \mathbb{K}[x]$. We define then $\deg(\sigma):=\deg(h)$, with the convention $\deg(0)=0$. If $\sigma$ is an homothety, that is if $h=0$, then the Newton polytope and all the parameters of $f$ and $\sigma(f)$ obviously coincide. Otherwise, we get:

\begin{lemma}\label{sigma}
Let $f$ be a minimal polynomial of degree $d_x >0$ and let $\sigma\in U$ not an homothety. Let $g=\sigma(f)$. Then:
\begin{enumerate}
\item If $f$ is in normal position and $d_y$ does not divide $d_x-c$ then 
\[
d_x(g)=\max(c(f)+d_y(f)\deg \sigma,d_x(f))\quad {\rm and} \quad c(g)=c(f)
\]
\item If $f$ is not in normal position then 
\[
d_x(g)=d_x(f)+d_y(f)\deg \sigma\quad {\rm and} \quad c(g)=d_x(f).
\]
\end{enumerate}
In both cases, $g$ is in normal position.
\end{lemma}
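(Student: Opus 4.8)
The plan is to track the Newton polytope of $g=\sigma(f)$ directly, using the explicit form of the characteristic polynomial $f_\infty$ provided by Lemma \ref{facette}. Write $\sigma(f)(x,y)=f(x,\lambda y+h(x))$ with $\lambda\in\mathbb{K}^*$, $h\in\mathbb{K}[x]$ of degree $e=\deg\sigma\ge 1$. First I would record the elementary facts: substituting $y\mapsto\lambda y+h(x)$ is a $U$-automorphism, so $g$ is again minimal of partial degree $d_y(g)=d_y$, hence by Lemma \ref{pol} its generic Newton polytope is again a trapezoid $\mathrm{Conv}((0,0),(0,d_y),(b(g),d_y),(0,a(g)))$, and it suffices to compute $a(g)$ and $b(g)$, equivalently the $x$-degrees of the leading and constant coefficients of $g$ with respect to $y$. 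The leading coefficient in $y$ of $g$ is $\lambda^{d_y}$ times the leading coefficient of $f$, so $b(g)=b(f)$ always. Thus the whole content of the lemma is the computation of $a(g)=\deg_x g(x,0)=\deg_x f(x,h(x))$, together with the assertion that $g$ ends up in normal position.

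Next I would compute $\deg_x f(x,h(x))$ by examining which monomials $c_{ij}x^iy^j$ of $f$ contribute the top $x$-degree after the substitution $y\mapsto h(x)$, i.e. maximize $i+ej$ over $\mathrm{Supp}(f)$. Because $f$ is minimal, $\mathrm{Supp}(f)$ lies in the trapezoid of Lemma \ref{pol}, and by Lemma \ref{facette} the monomials on the right-hand (slanted) edge are exactly those of $x^b(\alpha y^p+\beta x^q)^n$. In the normal-position case $(d_x,c)=(a,b)$ one has $q\ge 0$ and, since $d_y\nmid d_x-c$, Corollary \ref{c13}'s argument gives $p\ge 2$; the linear functional $i+ej$ restricted to the trapezoid is maximized either at the vertex $(a,0)$ (value $a=d_x$) or at the vertex $(b,d_y)$ (value $b+ed_y=c+d_y\deg\sigma$), and one checks these are the only candidates because $p\ge 2$ forces the slanted edge to have slope $-1/q$ with the interior lattice points of that edge — the "mixed" monomials $x^{b+qk}y^{p(n-k)}$ for $0<k<n$ — lying strictly below the line through $(a,0)$ and $(b,d_y)$ in the functional $i+ej$ whenever $e\ge 1$ (this uses $p\ge 2$ crucially, as for $p=1$ the whole edge would be a line of slope $-1/e_0$ and cancellation could occur). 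Hence $a(g)=\max(c+d_y\deg\sigma,\,d_x)$ and $c(g)=\min(a(g),b(g))=b(f)=c(f)$ since $a(g)\ge d_x\ge b=c$, proving (1). In the non–normal case $(d_x,c)=(b,a)$, so $a<b$ and the leading-coefficient vertex $(b,d_y)=(d_x,d_y)$ already dominates $(a,0)$; after substitution the top-degree term comes from $(d_x,d_y)$, giving $a(g)=d_x+d_y\deg\sigma$ and therefore $b(g)=b(f)=d_x(f)$, so $c(g)=\min(a(g),b(g))=d_x(f)$, proving (2). In both cases $a(g)\ge b(g)$, i.e. $g$ is in normal position.

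The main obstacle is the degree bookkeeping in case (1): one must be sure that no cancellation of top-degree terms occurs when evaluating $f(x,h(x))$, which is exactly why the hypothesis $d_y\nmid d_x-c$ (equivalently $p\ge 2$) is needed — it guarantees the slanted edge is genuinely "bent" relative to the functional $i+j\deg\sigma$ so that the contributions from the two vertices $(a,0)$ and $(b,d_y)$ cannot be cancelled by intermediate edge monomials, and that the leading term of $h(x)^{d_y}$ times the leading coefficient of $f$ (both honest monomials) survives. I would isolate this as the one careful lemma, handling the sub-case $c+d_y\deg\sigma=d_x$ (a tie between the two vertices) by noting that even then the coefficient of $x^{d_x}$ in $f(x,h(x))$ is a sum of a term from the $(a,0)$-vertex and a term from the $(b,d_y)$-vertex which, being a generic-enough combination — or after a harmless further reduction — does not vanish; alternatively one absorbs this boundary case into the statement by observing $\max$ is attained regardless. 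Everything else is routine manipulation of supports and the already-established structure theorems.
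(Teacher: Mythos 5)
Your overall strategy is the same as the paper's: write $g(x,0)=f(x,h(x))$, maximise the linear functional $(i,j)\mapsto i+j\deg\sigma$ over $\Supp(f)$ using the trapezoidal shape given by Lemma \ref{pol}, and observe that $\lc_y(g)=\lambda^{d_y}\lc_y(f)$, so $b(g)=b(f)$. Case (2) and the part of case (1) ruling out the interior lattice points of the slanted edge are fine. The step that would fail as written is your treatment of the tie $c+d_y\deg\sigma=d_x$ in case (1). First, that tie cannot occur under the hypothesis of case (1): in normal position $a-b=d_x-c$, so $a=b+d_y\deg\sigma$ means precisely $d_y\mid d_x-c$, which is excluded; equivalently, in the notation of Lemma \ref{facette}, the tie reads $q=p\deg\sigma$, impossible once $p\ge 2$. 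This is exactly where the hypothesis does its work — it is the paper's one-line observation that $a(f)\neq b(f)+d_y\deg\sigma$, so the maximum of the functional is attained at a \emph{unique} support monomial and no cancellation is possible — and your write-up misses it: you use $p\ge2$ only to control the intermediate edge monomials and then treat the endpoint tie as a live sub-case. Second, your proposed resolutions of that sub-case are wrong: when the tie does occur (i.e.\ when $d_y\mid d_x-c$), the two top contributions can and do cancel for a legitimate choice of $h$ — this is precisely the reduction $y\mapsto y-(\beta/\alpha)x^q$ used in the proof of Corollary \ref{c13} — so neither ``a generic-enough combination does not vanish'' nor ``the max is attained regardless'' is a valid argument. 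Since the tie is vacuous here, your proof is repaired by a single sentence, but as proposed the justification at the one crucial point is incorrect.

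A minor additional point: ``not a homothety'' only means $h\neq 0$, so $\deg\sigma=0$ (translations $y\mapsto\lambda y+\mu$, $\mu\in\mathbb{K}^*$) is allowed and must be covered, whereas you assume $\deg\sigma\ge 1$. The same functional argument goes through verbatim for $\deg\sigma=0$ (the paper's ``vertical if $k=0$''), so this is an omission rather than an obstruction.
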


\begin{proof}
Let us write $\sigma(f)=f(x,\lambda y+\mu x^k+r(x))$, with $\lambda, \mu\in \mathbb{K}^*$,  $k=\deg \sigma \ge 0$  and $\deg r <k$ and let us write $f=\sum c_{ij} x^{i} y^{j}$. We have
\begin{equation}\label{i+2}
g(x,0)=f(x,\mu x^k + r(x))=\sum_{i+kj=M} c_{ij} \mu^j x^{i+kj}+R(x)
\end{equation}
where 
\[
M:=\max_{(i,j)\in Supp(f)} (i+kj)\quad {\rm and}\quad \deg R < M,
\]
Since the line $i+k j=0$ has negative slope $-1/k$ (vertical if $k=0$), Lemma \ref{pol} forces $M$ to be reached at one of the two vertices $(a(f),0)$ or $(b(f),d_y)$ of $N_f$, forcing the equality
\[
M=\max (a(f),b(f)+k d_y).
\]
Suppose that $f$ is not in normal position. Then $a(f)<b(f)$ and $M=b(f)+k d_y$ is reached at the unique point $(b(f),d_y)$ of $N_f$. Thus, there is a unique monomial in $(\ref{i+2})$ with maximal degree. This forces the equality 
\[
a(g):=\deg_x(g(x,0))=b(f)+k d_y,
\]
On another hand it is clear that for any $f$, we have
\[
\lc_y(g)=\lambda^{d_y} \lc_y(f).
\]
In particular, $b(g)=b(f)=a(g)-k d_y\le a(g)$ forcing equality $(a(g),b(g))=(d_x(g),c(g))$. Since $f$ is not in normal position, we have $(a(f),b(f))=(c(f),d_x(f))$. Claim $(2)$ follows.
Suppose now that $f$ is in normal position and that $d_y$ does not divide $d_x-c$. In particular, we have $a(f)\ne b(f)+k d_y$ so that once again $M$ is reached at a unique point of $N_f$, forcing equality 
\[
a(g):=\deg_x(g(x,0))=\max(b(f)+k d_y,a(f)).
\]
Since $b(g)=b(f)\le a(g)$ we have $(a(g),b(g))=(d_x(g),c(g))$. Since $f$ is in normal position, we have $(a(f),b(f))=(d_x(f),c(f))$. Claim $(1)$ follows. 
\end{proof}

\paragraph*{Decomposition of $GL_2(\mathbb{K}[x])$.} 
Let $V:=GL_2(\mathbb{K})\subset G$. It is well known that $GL_2(\mathbb{K}[x])$ is the amalgamate product of the subgroups $U$ and $V$ along their intersections, see \cite{Nagao} for instance. On another hand, it is a classical fact that $V$ is  generated by translations $y\to y+\lambda$, homotheties $y\to \lambda y$, $\lambda \in \mathbb{K}^*$ and the inversion $\tau$. Since translations and homotheties lie in $U\cap V$, it follows that any transformation $\sigma\in G$ can be decomposed as an alternate product 
\begin{equation}\label{dec}
\sigma=\sigma_n\tau \sigma_{n-1} \tau \cdots \sigma_2\tau\sigma_1, 
\end{equation}
with  $\sigma_i\in U$ for all $i$. We can assume moreover that $\sigma_i\notin U\cap V$ except possibly for $i=1$ or $i=n$, that is
\[
\deg \sigma_i > 0 \qquad \forall\,\, i=2,\ldots,n-1.
\]
Let now $\sigma\in G$ having decomposition (\ref{dec}) and let $f\in \mathbb{K}[x,y]$.  We introduce the notation
\[
f_1=\sigma_1(f)\quad {\rm and}\quad f_i=(\sigma_i \tau)(f_{i-1}), \,\, i=2,\ldots,n.
\] 
and we write for short $d_i=d_x(f_i)$ and  $c_i=c(f_i)$. The following proposition has to be compared to \cite{wight} where the author considers the behaviour of the total degree of a bivariate polynomial under the action of $\Aut(\mathbb{A}^2)$.

\begin{proposition}\label{sequence}
Let $f$ be a minimal polynomial in normal position such that $d_y$ does not divide $d_x-c$ and let $\sigma\in G$. With the notation introduced before, we have
\[
d_x\le d_1< d_2< \cdots < d_{n-1}\le d_n \quad {\rm and } \quad c=c_1< c_2< \cdots < c_{n-1}\le c_n.
\]
Moreover $d_y$ does not divides $d_n-c_n$ if and only if $(d_n,c_n)= (d_x,c)$.
\end{proposition}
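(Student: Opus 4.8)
\textbf{Proof strategy for Proposition \ref{sequence}.}

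The plan is to argue by induction on $n$, tracking how the parameters $(d_x,c)$ evolve under the alternate decomposition $\sigma=\sigma_n\tau\sigma_{n-1}\tau\cdots\sigma_2\tau\sigma_1$, and the main tool is Lemma \ref{sigma} together with Lemma \ref{tau}. First I would set up the induction by establishing the base case: $f_1=\sigma_1(f)$. If $\sigma_1$ is a homothety (or lies in $U\cap V$), then $f_1=f$ up to the obvious coincidence of Newton polytopes, so $(d_1,c_1)=(d_x,c)$ and $f_1$ is still minimal in normal position with $d_y\nmid d_1-c_1$; if $\sigma_1$ has positive degree, then case (1) of Lemma \ref{sigma} gives $d_1=\max(c+d_y\deg\sigma_1,d_x)\ge d_x$ and $c_1=c$, and Lemma \ref{sigma} also guarantees $f_1$ is in normal position. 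In either case $f_1$ is minimal (minimality is $G$-invariant by (\ref{invariance})), and $c_1=c$ as required.

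Next I would run the inductive step. Assume $f_{i-1}$ is minimal in normal position with $d_y\nmid d_{i-1}-c_{i-1}$, and set $f_i=(\sigma_i\tau)(f_{i-1})$. Applying $\tau$ first: by Lemma \ref{tau}, $\tau(f_{i-1})$ has the same $(d_y,d_x,c)$ but is now \emph{not} in normal position (since normal position means $b\le a$, and $\tau$ swaps $a$ and $b$; the non-divisibility $d_y\nmid d_x-c$ forces $a\ne b$, so $\tau(f_{i-1})$ is genuinely in non-normal position). Now apply $\sigma_i$. For $2\le i\le n-1$ we have $\deg\sigma_i>0$, so $\sigma_i$ is not a homothety and case (2) of Lemma \ref{sigma} applies to the non-normal polynomial $\tau(f_{i-1})$: this yields
\[
d_i=d_x(\tau(f_{i-1}))+d_y\deg\sigma_i=d_{i-1}+d_y\deg\sigma_i>d_{i-1}
\]
and $c_i=d_x(\tau(f_{i-1}))=d_{i-1}>c_{i-1}$, with $f_i$ again in normal position. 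This simultaneously gives the strict inequalities $d_{i-1}<d_i$ and $c_{i-1}<c_i$ for $i$ in the middle range, and it also shows $d_i-c_i=d_y\deg\sigma_i$, hence $d_y\mid d_i-c_i$ for all such $i$ — so once we pass the first factor, non-divisibility is lost, which is exactly why the chain of parameters must strictly grow. For the last index $i=n$, the factor $\sigma_n$ may be a homothety (lie in $U\cap V$); if it is, then $d_n=d_{n-1}$ and $c_n=c_{n-1}$, giving the nonstrict inequalities $d_{n-1}\le d_n$ and $c_{n-1}\le c_n$; if $\deg\sigma_n>0$ we again get strict inequalities via case (2) of Lemma \ref{sigma} applied to the non-normal $\tau(f_{n-1})$.

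It remains to prove the final equivalence: $d_y\nmid d_n-c_n$ iff $(d_n,c_n)=(d_x,c)$. The reverse implication is the hypothesis on $f$. For the forward implication, suppose $n\ge 2$; then by the computation above $d_n-c_n$ equals either $d_y\deg\sigma_n$ (if $\deg\sigma_n>0$) or $d_{n-1}-c_{n-1}=d_y\deg\sigma_{n-1}$ (if $\sigma_n$ is a homothety, using $n-1\ge 2$ so that $\deg\sigma_{n-1}>0$) — wait, I should be careful here: if $n=2$ and $\sigma_2$ is a homothety, then $d_2-c_2=d_1-c_1$, and $d_1-c_1$ need not be a multiple of $d_y$ in general. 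So I would handle this edge case separately: if $n=1$, or if $n=2$ with $\sigma_2\in U\cap V$, then effectively $\sigma$ acts through a single $U$-factor (composed with elements of $V$ that are homotheties/translations up front) and $(d_n,c_n)=(d_1,c_1)$; I then need the sub-claim that $d_y\nmid d_1-c_1$ forces $d_1=d_x$, i.e.\ that $\deg\sigma_1=0$ or the max in Lemma \ref{sigma}(1) is attained at $d_x$. Concretely: if $\deg\sigma_1=k>0$ and $c+d_yk>d_x$, then $d_1-c_1=(c+d_yk)-c=d_yk$, a multiple of $d_y$, contradiction; so $d_1=d_x$, giving $(d_1,c_1)=(d_x,c)$. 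If $\deg\sigma_1=k>0$ and $c+d_yk\le d_x$, then $d_1=d_x$ and $c_1=c$ directly. For $n\ge 2$ with $\deg\sigma_n>0$, $d_n-c_n=d_y\deg\sigma_n$ is a multiple of $d_y$, so $d_y\mid d_n-c_n$; and for $n\ge 3$ with $\sigma_n\in U\cap V$, $d_n-c_n=d_{n-1}-c_{n-1}=d_y\deg\sigma_{n-1}$ (since $3\le n$ means $n-1\ge 2$ is a middle index with positive degree), again a multiple of $d_y$. Thus in all cases with $n\ge 2$ and genuine intermediate transformations, $d_y\mid d_n-c_n$, so $d_y\nmid d_n-c_n$ forces $n=1$ (or the degenerate $n=2$ case), whence $(d_n,c_n)=(d_x,c)$.

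\textbf{Main obstacle.} The delicate point is the bookkeeping around the \emph{boundary indices} $i=1$ and $i=n$, where $\sigma_i$ is allowed to lie in $U\cap V$ (homotheties/translations) so that the strict inequalities degrade to $\le$, and around verifying that the non-divisibility condition on $f_1$ propagates correctly in the degenerate small-$n$ cases. Everything in the ``middle'' of the chain is a clean application of Lemma \ref{sigma}(2) after conjugating by $\tau$ into non-normal position; the real care is needed to confirm that the first factor $\sigma_1$ cannot simultaneously have positive degree, produce $d_1>d_x$, and preserve $d_y\nmid d_1-c_1$ — this is where the arithmetic of the max in Lemma \ref{sigma}(1) does the work and forces $d_1=d_x$ under the non-divisibility hypothesis.
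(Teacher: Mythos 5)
Your proof is correct and follows essentially the same route as the paper: decompose $\sigma$ as an alternating word, apply Lemma \ref{tau} and the two cases of Lemma \ref{sigma} inductively, note that after the first factor one has $d_i-c_i=d_y\deg\sigma_i$ (so divisibility by $d_y$ takes over and the parameters strictly grow), and treat the boundary factors $\sigma_1,\sigma_n$ separately exactly as the paper does. The only cosmetic slips are that your stated inductive hypothesis should not retain $d_y\nmid d_{i-1}-c_{i-1}$ (it fails from the second step on, as you yourself observe; what actually propagates and is all you use is the strict inequality $c_{i-1}<d_{i-1}$, which your computation does establish), and that an endpoint factor in $U\cap V$ may be a translation $y\mapsto y+\mu$ rather than a homothety, in which case Lemma \ref{sigma}(2) gives $c_n=d_{n-1}$ rather than $c_n=c_{n-1}$ --- neither affects the inequalities nor the final equivalence, and the paper's own proof is equally brief on this point.
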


\begin{proof}
By Lemma \ref{sigma}, the proposition is true if $n=1$. We have $a(f)>b(f)$ by assumption so that $a(f_1)>b(f_1)$ by Lemma \ref{sigma}. By Lemma \ref{tau}, it follows that $\tau(f_1)$ is not in normal position. If $\sigma_2$ is an homothety, then $n=2$ and $f_2=\sigma(\tau(f_1))$ has the same parameters than $f_1$, proving the proposition in that case. If $\sigma_2$ is not an homothety, then $d_2 > d_1$, $c_2>c_1$ and $d_y$ divides $d_2-c_2$ by Lemma \ref{sigma}. Hence the Proposition follows for $n=2$. Moreover we have 
\[
n>2\quad \Longrightarrow \quad \deg \sigma_2>0 \quad \Longrightarrow\quad  a(f_2)>b(f_2)
\]
the second implication using again Lemma \ref{sigma}. Thus $n>2$ implies moreover that $\tau(f_2)$ is not in normal position. The Proposition then follows by induction. 
\end{proof}

\paragraph*{Proof of Theorem \ref{reduced}.} We have $(1)\Rightarrow (3)$ by Corollary \ref{c13} while the implication $(3)\Rightarrow (2)$ is an immediate consequence of Proposition \ref{sequence}. The remaining implication $(2)\Rightarrow (1)$ follows from equality $V=d_y(c+d_x)/2$ that holds for minimal polynomials thanks to Lemma \ref{pol}. $\hfill\square$

\vskip2mm
\noindent

As announced at the beginning of the section, Theorem \ref{contrexintro} is an easy corollary of Theorem \ref{reduced}.

\paragraph*{Proof of Theorem \ref{contrexintro}.} A direct computation shows that the polynomial $f= x(x-y^2)^2-2\lambda y(x-y^2)+\lambda^2$ is minimal with parameters $(d_y,d_x,c)=(4,3,1)$ for all $\lambda\in \mathbb{K}^*$ (for $\lambda=0$, the polynomial $f$ is reducible). Since $d_y$ does not divide $d_x-c$, $f$ is reduced by  Theorem \ref{reduced}. Hence $c=c_{\min}=1\ne 0$ and $f$ is not equivalent to a monic polynomial by Theorem \ref{reduced}. $\hfill{\square}$

\subsection{Cremona equivalence of minimal polynomials.}\label{Cremona} 

Theorem \ref{contrexintro} shows that we can not hope that a nonmonic minimal polynomial can be transformed to a coordinate via a composition of an element of $GL_2(\mathbb{K}[x])$ with an element of $\Aut(\mathbb{A}^2)$. However, both groups act on the curve of $f$ as subgroups of the Cremona group $\Bir(\mathbb{A}^2)$ of birational transformations of the plane, and both Theorem \ref{t3} and $GL_2(\mathbb{K}[x])$-invariance of the degree of the discriminant leads us to ask the natural following question:

\begin{question}\label{q1}
Do minimal polynomials define curves Cremona equivalent to lines ?
\end{question}

Theorem \ref{t3} gives a positive answer in the case of monic polynomials, and more generally for all members of their $GL_2(\mathbb{K}[x])$-orbits. This is also the case for the nonmonic minimal polynomial of Theorem \ref{contrexintro} as it will be shown in the next Proposition. Note that being Cremona equivalent to a line does not imply minimality.
In a close context, it has recently been proved in \cite{KP} that any rational cuspidal curve of $\mathbb{P}^2$ is Cremona equivalent  to a line, solving a famous problem of Coolidge and Nagata. In the present context, minimal polynomials define rational unicuspidal curves of $\mathbb{P}^1\times\mathbb{P}^1$ (Corollary \ref{cmin}) and we may ask whether the result of Koras-Palka extends to this case.
These kind of problems are closely related to the geometry of the minimal embedded resolution.

\begin{proposition}\label{prop:exemple}
The curve defined be the polynomial $f= x(x-y^2)^2-2\lambda y(x-y^2)+\lambda^2$ is Cremona equivalent to a line.
\end{proposition}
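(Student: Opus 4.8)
The plan is to exhibit an explicit birational map of $\mathbb{P}^1\times\mathbb{P}^1$ (or of $\mathbb{A}^2$) that sends the curve $C\colon f=0$ to a line, using the structure of $f$ that is already apparent. The key observation is that $f$ is built out of the two "coordinate-like" quantities $u:=x-y^2$ and $y$ itself: writing $f = x u^2 - 2\lambda y u + \lambda^2$ and substituting $x = u + y^2$ gives $f = (u+y^2)u^2 - 2\lambda y u + \lambda^2 = u^3 + y^2 u^2 - 2\lambda y u + \lambda^2$. The point is that $(x,y)\mapsto(u,y)=(x-y^2,y)$ is a polynomial automorphism of $\mathbb{A}^2$ (an elementary De Jonquières transformation), so it suffices to show that the curve $D\colon u^3 + y^2u^2 - 2\lambda y u + \lambda^2 = 0$ is Cremona equivalent to a line. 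Now I would look for a further birational transformation adapted to $D$: the shape $u^3 + (yu-\lambda)^2 = 0$ — indeed $y^2u^2 - 2\lambda yu + \lambda^2 = (yu-\lambda)^2$ — is very suggestive. So $D$ is $u^3 + (yu-\lambda)^2 = 0$, i.e. $(yu-\lambda)^2 = -u^3$. Setting $v := yu-\lambda$, one would like $(u,y)\mapsto(u,v)$ to be birational; it is, since $y = (v+\lambda)/u$, with inverse $v\mapsto y$ rational. Under it $D$ becomes the cuspidal cubic $v^2 + u^3 = 0$ in the $(u,v)$-plane, which is the standard example of a rational cuspidal plane curve, well known to be Cremona equivalent to a line (e.g. parametrise it as $(u,v)=(-t^2,t^3)$ and apply a quadratic Cremona transformation based at the cusp and two other points, or simply invoke \cite{KP}).

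Concretely, the steps I would carry out in order are: (1) verify that $\sigma_1\colon(x,y)\mapsto(x-y^2,y)$ lies in $\Aut(\mathbb{A}^2)$ and compute $f\circ\sigma_1^{-1}$, obtaining $u^3+(yu-\lambda)^2$; (2) introduce the birational map $\sigma_2\colon(u,y)\dashrightarrow(u,yu-\lambda)=(u,v)$, check it is birational (its inverse is $(u,v)\mapsto(u,(v+\lambda)/u)$), and note its indeterminacy lies on $u=0$, which meets $D$ only at the point corresponding to the unique place of $C$ at infinity — so one must check $\sigma_2$ does not contract the curve $D$, which holds because $D$ is not a component of $u=0$; (3) identify the image $\sigma_2(D)$ as the cuspidal cubic $v^2+u^3=0$; (4) conclude by the classical fact that the cuspidal cubic is Cremona equivalent to a line, either by the explicit quadratic transformation centred at the singular point (blow up the cusp and its infinitely near point, plus one more base point, to straighten it) or directly by citing the Coolidge–Nagata result \cite{KP} for rational cuspidal plane curves. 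Composing $\sigma_1$, $\sigma_2$ and this last Cremona map, all of which are birational self-maps of the plane (after the standard identification of $\mathbb{A}^2$-birational maps with $\mathbb{P}^2$- or $\mathbb{P}^1\times\mathbb{P}^1$-birational maps), yields a Cremona equivalence between $C$ and a line.

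The main obstacle I anticipate is purely bookkeeping about base points and the passage between the various models ($\mathbb{A}^2$, $\mathbb{P}^2$, $\mathbb{P}^1\times\mathbb{P}^1$): one needs to be sure that none of the birational maps contracts the strict transform of the curve and that "Cremona equivalent to a line" is interpreted consistently (the notion is independent of the rational surface chosen). The transformation $\sigma_2$ is the delicate one since it is not an automorphism of $\mathbb{A}^2$ — its points of indeterminacy must be tracked, but they are concentrated over $u=0$, exactly where $C$ has its single place, so no contraction of the curve occurs. Once one trusts the explicit substitutions, the computational core is a one-line identity, $x(x-y^2)^2 - 2\lambda y(x-y^2)+\lambda^2 \mapsto u^3 + (yu-\lambda)^2 \mapsto v^2 + u^3$, and the geometric content reduces to the classical cuspidal-cubic case, which I would simply cite rather than reprove.
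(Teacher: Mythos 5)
Your proposal is correct and takes essentially the same route as the paper: the same two substitutions $(x,y)\mapsto(x-y^2,y)$ and $y\mapsto(y+\lambda)/u$ reduce $f$ to the cuspidal cubic $u^3+v^2=0$, after which one concludes by the classical fact that this curve is Cremona equivalent to a line (the paper makes this explicit with the degree-three De Jonqui\`eres map $(X:Y:Z)\mapsto(XY^2:Y^3:X^3+Y^2Z)$). One small caveat: a single quadratic transformation based at the cusp and two further points of the cubic sends it to a conic, not a line, so for the last step either compose with one more quadratic map, use an explicit cubic transformation as in the paper, or cite \cite{KP} as you also propose.
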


\begin{proof}
The polynomial $f$ being minimal with parameters $(d_x,d_y,c)=(3,4,1)$, it is easy to see that it defines a unicuspidal curve of $\mathbb{P}^2$. Hence the claim follows from \cite{KP}. It has to be noticed that we can 'read' the underlying birational transformation on the Newton polytope of $f$. We have $f_1(x,y):=f(x+y^2,y)=x^3 +(xy-\lambda)^2$ and $f_2(x,y):=f_1(x,y/x+\lambda)=x^3+y^2$ defines a curve which is clearly Cremona equivalent to $f=0$. Let $C\subset \mathbb{P}^2$ be the projective plane curve defined by the homogenisation $F(X,Y,Z)=X^3+Y^2Z$ of $f_2$. Consider the rational map 
\begin{eqnarray*}
\mathbb{P}^2 & \dashrightarrow &  \mathbb{P}^2 \\ (X:Y:Z)  & \mapsto & (XY^2:Y^3:X^3+Y^2 Z).
\end{eqnarray*}
The restriction of $\sigma$ to the chart $Y=1$ coincides with the affine map $(x,z)\to (x,x^3+z)$ which is clearly invertible. Hence $\sigma\in \Bir(\mathbb{P}^2)$ is a Cremona transformation that satisfies $\sigma^{-1}(Y=0)=C$. 
\end{proof}

\subsection{Divisibility constraints for minimal reduced polynomials. Proof of Theorem \ref{nonmonicintro}.}\label{toric}


Thanks to Theorem \ref{tminimal}, monic minimal polynomials coincide with coordinate polynomials. In particular, it follows from \cite{Ab} that they obey to the crucial property:

\begin{proposition}[(Abhyankar-Moh's Theorem reformulated)]\label{prop:conj}
Let $f$ be a monic minimal polynomial. Then  $d_x$ divides $d_y$ or $d_y$ divides $d_x$. 
\end{proposition}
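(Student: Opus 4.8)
The plan is to reduce Proposition \ref{prop:conj} to the original Abhyankar–Moh theorem by exploiting the characterisation of monic minimal polynomials given by Theorem \ref{tminimal} (equivalently Theorem \ref{t3}). First I would observe that since $f$ is monic minimal, there is an automorphism $\sigma\in\Aut(\mathbb{A}^2)$ with $f\circ\sigma=y$, so the affine curve $f=0$ is isomorphic to $\mathbb{A}^1$; moreover, as recorded in the proof of Theorem \ref{t3}, the generic Newton polytope of $f$ has an edge joining $(d_x,0)$ to $(0,d_y)$, and $f$ is simultaneously monic with respect to $x$ and to $y$. Hence the Zariski closure of $f=0$ in $\mathbb{P}^2$ is a curve of degree $d=\max(d_x,d_y)$ — in fact of degree $d_x+d_y$ minus the contribution at the corner, but the key point is that it meets the line at infinity in a single point with a single place, because $C\subset\mathbb{P}^1\times\mathbb{P}^1$ has a unique place on the boundary divisor and this point is $(\infty,\infty)$.

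Next, the heart of the argument: the affine plane curve $f=0$ is a smooth rational curve with a single place at infinity of $\mathbb{P}^2$, i.e.\ it is (up to automorphism of $\mathbb{A}^2$) an embedding of the line, so by the Abhyankar–Moh–Suzuki theorem \cite{Ab,AM} the two Puiseux pairs / characteristic exponents of its branch at infinity satisfy the divisibility relation that one degree divides the other. Concretely, writing $(d_x,d_y)$ for the bidegree, the local behaviour at $(\infty,\infty)$ is governed by the edge of the Newton polytope from $(d_x,0)$ to $(0,d_y)$, and the single-place condition forces this edge — after dividing by $\gcd(d_x,d_y)$ — to have the form dictated by a Puiseux expansion $y \sim x^{d_y/d_x}$ (or $x\sim y^{d_x/d_y}$). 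The standard Abhyankar–Moh inequality for embeddings of the line states precisely that $d_x\mid d_y$ or $d_y\mid d_x$; I would invoke this, citing \cite{Ab}, rather than reprove it.

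The cleanest way to phrase the reduction is: by Theorem \ref{tminimal}, $f=g\circ\sigma_y$ with $\deg g=1$, so there is $\sigma\in\Aut(\mathbb{A}^2)$ with $f\circ\sigma=y$; then apply Jung's theorem to decompose $\sigma$ into affine and elementary (de Jonquières) automorphisms and track how the bidegree $(d_x,d_y)$ of $f$ evolves. Since $f=y\circ\sigma^{-1}$, the bidegree of $f$ is exactly the pair of degrees of a coordinate function of $\mathbb{A}^2$; Abhyankar–Moh's theorem, in the reformulation that the degree pair $(n,m)$ of any coordinate polynomial satisfies $n\mid m$ or $m\mid n$, gives the claim directly. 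I expect the main obstacle to be purely expository: making precise that the bidegree $(d_x,d_y)$ of the \emph{monic} minimal polynomial $f$ equals the pair of one-variable degrees appearing in the classical Abhyankar–Moh statement (the classical statement is usually phrased for the total degree of a coordinate in $\mathbb{A}^2$), and checking that monicity in both variables — which we get from the Newton polytope description in the proof of Theorem \ref{t3} — is exactly what forces these two numbers to be the relevant Puiseux data at the single place at infinity. Once that identification is in place, the proposition is immediate.
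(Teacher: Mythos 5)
Your reduction is exactly the paper's argument: by Theorem \ref{tminimal} (equivalently Theorem \ref{t3}) a monic minimal polynomial is a coordinate polynomial, and then the divisibility $d_x\mid d_y$ or $d_y\mid d_x$ is precisely the classical Abhyankar--Moh epimorphism theorem \cite{Ab} applied to the polynomial parametrisation, whose degrees are $(d_x,d_y)$ (cf.\ Proposition \ref{paramet}). The additional detours (Puiseux pairs, Jung decomposition, and the muddled remark about the degree of the closure in $\mathbb{P}^2$) are not needed and play no role in the correct core reduction.
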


Proposition \ref{prop:conj} is another reformulation of the embedding line theorem of Abhyankar-Moh \cite{Ab}. Indeed, this property allows to reduce the degree of $f$ with translations $x\mapsto x-\alpha y^k$ or $y\mapsto y-\alpha x^k$. Since these translations preserve the property of being simultaneously monic and minimal, we can reach $f=y$. In the nonmonic case, a similar reduction process requires a positive answer to the following question: 

\begin{question}\label{q2}
If $f$ is minimal, is it true that $d_x-c$ divides $d_y$ or $d_y$ divides $d_x-c$ ?
\end{question}

Here, the parameter $c$ is the one defined in the previous Subsection \ref{orbits}. 
This property holds for all polynomials in the $G$-orbit of a monic minimal polynomial by Proposition \ref{prop:conj} and Proposition \ref{sequence}. It also holds for the minimal reduced polynomial $f$ of Theorem \ref{contrexintro} ($d_x-c=2$ divides $d_y=4$), and might be seen as a key point in the explicit construction of the birational map of  Proposition \ref{prop:exemple}.  Although Questions \ref{q1} and \ref{q2} are closely related, translations on $x$ do not preserve the minimality of a nonmonic minimal polynomial, and it is not clear that a positive answer to Question \ref{q2} leads to a positive answer to Question \ref{q1}. Anyway, it would be an important property for reducing minimal polynomials to a "nice canonical form".  We prove here a partial result that shows that if $f$ is minimal and $d_y$ does not divide $d_x-c$ then  $d_y$ and $d_x-c$ are not coprime as soon as $d_x>1$ . 

\begin{theorem}\label{conj}
Let $f$ be a minimal polynomial of degree $d_y\ge 1$. If $f$ is nonreduced, then $d_y$ divides $d_x-c$. If $f$ is reduced, we have:
\begin{enumerate}
\item If $d_x=0$ then $c=0$ and $d_y=1$.
\item If $d_x=1$ then $c=0$ and $d_y>1$.
\item If $d_x>1$ and $c=0$ then $d_x$ divides $d_y$. 
\item If $d_x>1$ and $c>0$ then $2\le \gcd(d_x-c,d_y)\le d_y/2$.
\end{enumerate}
\end{theorem}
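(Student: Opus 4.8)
The plan is to dispose of the degenerate situations by inspection and to concentrate all the work on item~(4). If $f$ is nonreduced, then $d_y\mid d_x-c$ by the very definition of ``reduced'' together with the equivalence $(1)\Leftrightarrow(3)$ of Theorem~\ref{reduced}. So assume from now on that $f$ is reduced; then $d_y\nmid d_x-c$, which already forces $d_x-c\ge1$ (otherwise $d_y\mid0$) and $d_y\ge2$ (otherwise $d_y=1$ divides everything). If $d_x=0$, then $f\in\mathbb{K}[y]$ is irreducible over an algebraically closed field, hence linear, so $d_y=1$ and $c=0$, which is item~(1). If $d_x=1$, then $c=\min(a,b)\le\max(a,b)=1$, and $c=1$ is impossible (it would give $d_x-c=0$), so $c=0$ and $d_y>1$, which is item~(2). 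For items~(3)–(4) we may, after replacing $f$ by $\tau(f)$ if necessary (Lemma~\ref{tau} preserves minimality, reducedness and the triple $(d_y,d_x,c)$), assume that $f$ is in normal position, so that $(d_x,c)=(a,b)$. Then item~(3), i.e.\ the case $c=0$, $d_x>1$, is quick: $b=c=0$ means the leading coefficient of $f$ with respect to $y$ is a nonzero constant, so $f$ is monic; by Theorem~\ref{tminimal} (equivalently Theorem~\ref{t3}) a monic minimal polynomial is a coordinate of $\mathbb{K}[x,y]$, so Proposition~\ref{prop:conj} gives $d_x\mid d_y$ or $d_y\mid d_x$, and the second alternative reads $d_y\mid d_x-c$ and contradicts reducedness; hence $d_x\mid d_y$.

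It remains to treat item~(4): $d_x>1$, $c>0$, $f$ reduced. Let $f_\infty=(\alpha y^p+\beta x^q)^n$ be the characteristic polynomial at $x=\infty$ of Lemma~\ref{facette}; in normal position $pn=d_y$, $qn=d_x-c$ and $\gcd(p,q)=1$, so $n=\gcd(d_y,d_x-c)$ and $q\ge1$ (since $d_x>c$ by reducedness). The upper bound is immediate: $n$ is a divisor of $d_y$ and $n<d_y$ by reducedness, so $n\le d_y/p_{\min}\le d_y/2$, where $p_{\min}$ is the smallest prime factor of $d_y$. For the lower bound $n\ge2$ I argue by contradiction, supposing $n=1$, so that $q=d_x-c$ and $\gcd(d_y,d_x-c)=1$. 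By Corollary~\ref{cmin} the curve $C\subset\mathbb{P}^1\times\mathbb{P}^1$ defined by $f$ is rational, is smooth outside the line $x=\infty$, and has a single place there; a single place over $x=\infty$ means $r_\infty=d_y-1$ (Definition~\ref{def1}), so that place is totally ramified of index $d_y$, and since $q\ge1$ the corresponding point is $p_0=(\infty,\infty)$, the unique singular point of $C$. In the smooth local coordinates $u=1/x$, $v=1/y$ at $p_0$, Lemma~\ref{facette} fixes the leading exponent of the (unique) branch, and $\gcd(d_y,d_x-c)=1$ leaves room for no further characteristic pair, so the branch is parametrised by $u=t^{d_y}$, $v=\gamma\,t^{\,d_x-c}+\cdots$ and has the single characteristic pair $(d_y,\,d_x-c)$; hence
\[
\delta_{p_0}(C)=\frac{(d_y-1)(d_x-c-1)}{2}.
\]
On the other hand the adjunction formula~(\ref{adjunction}), together with $g(C)=0$, gives $\delta_{p_0}(C)=p_a(C)=(d_x-1)(d_y-1)$. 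Cancelling the factor $d_y-1\ne0$ yields $d_x-c-1=2(d_x-1)$, i.e.\ $c=1-d_x<0$, contradicting $c\ge0$. Therefore $n\ge2$, and altogether $2\le\gcd(d_x-c,d_y)\le d_y/2$.

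The genuinely delicate point is the local analysis at $p_0$ in item~(4): one must be certain that, when $n=1$, the leading Puiseux term $v\sim\gamma\,u^{(d_x-c)/d_y}$ of the unique place at $x=\infty$ is the only characteristic datum of that branch (no higher Puiseux exponents hidden in the lower-order terms of $f$), and one must compute its delta invariant in a surface that is smooth at $p_0$ — here $\mathbb{P}^1\times\mathbb{P}^1$ — so that the Sylvester value $(d_y-1)(d_x-c-1)/2$ is legitimate. The cleanest way to organise this, and the one I would follow in the write-up, is to embed $C$ in the toric surface attached to the polytope $P(f)$ of Lemma~\ref{pol}: its right-hand edge, of lattice length exactly $n=\gcd(d_y,d_x-c)$, is the edge governing the place at $x=\infty$, and $n=1$ is precisely the regime in which that place is ``as simple as possible'', so that the strictly positive genus drop $(d_x-1)(d_y-1)$ supplied by Corollary~\ref{cmin} collapses into the numerical impossibility $c=1-d_x$. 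Everything outside item~(4) is bookkeeping with the Newton polytope, the inversion $\tau$, and the two structural inputs Theorem~\ref{tminimal} and Proposition~\ref{prop:conj}.
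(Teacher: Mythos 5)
Your proposal is correct, and for the nonreduced case and items (1)--(3) it follows the paper's proof essentially verbatim (Theorem~\ref{reduced}, the inversion $\tau$ to reach normal position, and Abhyankar--Moh via Proposition~\ref{prop:conj}). Where you genuinely diverge is the heart of the matter, the lower bound $\gcd(d_x-c,d_y)\ge 2$ in item~(4). The paper argues globally: it embeds $C$ in the toric surface $X$ attached to the normal fan of $P(f)$, notes that $\gcd(d_x-c,d_y)=1$ means the right-hand edge has lattice length one, so $C\cdot E=1$, $C$ meets the divisor $E$ transversally and is therefore smooth in all of $X$, and then the lattice-point genus formula $g(C)=\Card(\interieur(P)\cap\mathbb{Z}^2)$ contradicts rationality, since $d_x>1$, $c>0$, $d_y\ge 2$ force interior lattice points. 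You instead argue locally in $\mathbb{P}^1\times\mathbb{P}^1$ at the unique place $(\infty,\infty)$: the place has ramification index $d_y$ and the pole order of $y$ there is $d_x-c$, so the value semigroup of the single branch contains the coprime pair $(d_y,d_x-c)$, giving $\delta_{p_0}\le (d_y-1)(d_x-c-1)/2$, while adjunction and $g=0$ force $\delta_{p_0}=(d_x-1)(d_y-1)$, an arithmetic impossibility ($c=1-d_x<0$). This is a valid alternative; in fact the ``delicate point'' you flag (that there is exactly one characteristic pair, so that $\delta_{p_0}$ \emph{equals} $(d_y-1)(d_x-c-1)/2$) is not even needed, since the semigroup containment already yields the upper bound on $\delta_{p_0}$, and that inequality alone produces the contradiction -- a small simplification worth making explicit in a write-up. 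The trade-off: the paper's toric route avoids any Puiseux/semigroup bookkeeping but imports the toric genus formula and the identification $X\setminus E=\mathbb{A}^1\times\mathbb{P}^1$, whereas your route stays on $\mathbb{P}^1\times\mathbb{P}^1$ and reuses only Proposition~\ref{p1}'s framework (adjunction plus the local data of the unique place). Amusingly, the ``cleanest organisation'' you sketch at the end -- passing to the toric surface of $P(f)$ and reading the place off the right-hand edge -- is precisely the proof the paper gives.
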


\paragraph*{Proof of Theorem \ref{conj}.} If $f$ is nonreduced, then $d_y$ divides $d_x-c$ by  Theorem \ref{reduced}. Assume that $f$ is reduced. If $d_x=0$, then $c=0$ is obvious and $d_y=1$ since otherwise $f$ would not be irreducible. If $d_x=1$, then $c\le 1$. Since $f$ is reduced, we must have $c=0$ and $d_y>1$ by Theorem \ref{reduced} since otherwise $d_y$ would divide $d_x-c$. Suppose now that $d_x>1$. If $c=0$, then we can suppose that $f$ is monic up to apply the inversion $y\to 1/y$. The claim thus follows from  Proposition \ref{prop:conj} combined with the fact that $d_y$ can not divide $d_x$ since $f$ is assumed to be reduced (Theorem \ref{reduced}). Suppose now that $d_x>1$ and $c>0$. Then $\gcd(d_x-c,d_y)\le d_y/2$ by Theorem \ref{reduced}. Let $P:=P(f)$ be the generic Newton polytope of $f$. Let $X$ be the complete simplicial toric surface associated to the normal fan of $P$ and let $C\subset X$ be the curve defined by $f$. Since $c>0$, $P$ has exactly four edges.   To the right hand edge $\Lambda$ of $P$ corresponds a toric divisor $E\subset X$ such that $E\simeq \mathbb{P}^1$,
\[
X\setminus E=\mathbb{A}^1\times \mathbb{P}^1\quad {\rm and}\quad E\cdot C=\Card(\Lambda \cap \mathbb{Z}^2)-1,
\]
where $E\cdot C$ stands for the intersection degree. In particular we have by minimality of $f$ that $C$ is smooth in $X\setminus E$. 
Now, we have $\gcd(d_x-c,d_y)=1$ if and only if $\Lambda$ has no interior lattice points, that is if and only if $C\cdot E=1$. Since both $C$ and $E$ are effective divisors, it follows in particular that $C$ intersects $E$ at a unique point and is transversal to $E$ at that point. In particular it is smooth along $E$, hence smooth in $X$ by what we said before. The genus formula for smooth curves in toric surface, combined with the rationality of $C$ leads to the equality
\[
0=g(C)=\Card( \interieur(P)\cap \mathbb{Z}^2),
\]
where $\interieur(P)$ stands for the interior of $P$. But this contradicts the fact that $P$ is the convex hull of $(0,0),(0,d_y),(c,d_y),(d_x,0)$ with $d_x>1$, $c>0$ and $d_y\ge 2$.  $\hfill{\square}$

\paragraph*{Proof of Theorem \ref{nonmonicintro}.} It is an immediate consequence of Theorem \ref{conj}. Namely, if $d_y$ is prime, only case $(2)$ in Theorem \ref{conj} can occur for a reduced form of $f$. $\hfill{\square}$

\paragraph*{Another proof of Theorem \ref{nonmonicintro}.} We found it instructive to give a direct proof of Theorem \ref{nonmonicintro} that only uses properties of the discriminant. Let $f$ be a minimal polynomial with parameters $a,b$ and $d=d_y$ prime. Let us write $d=pn$ and $a-b=qn$ with $p,q$ coprime, as in equation (\ref{facette}). We must have $d_{x,\min}>0$ since otherwise $f$ would have a constant discriminant, contradicting minimality and $d\ge 2$. It follows from Corollary \ref{c13} that a suitable $GL_2(\mathbb{K}[x])$-reduction leads to the case $p\ne 1$. One can suppose also that $a-b>0$ up to apply an inversion. Since $d$ is assumed to be prime, it follows that $p=d$ and $n=1$. 

By equation (\ref{facette}), the Newton polytope of $f$ has a unique edge $\Lambda$ that connects the points $(d,b)$ and $(0,a)$, with slope $\alpha:=\frac{d}{b-a}$, that is
\[
\Lambda=\big\{(i,j)\in \mathbb{N}^2\,\,|\,\, i(a-b)+jd=ad,\,\,0\le j \le d\big\}.
\]
Since $d$ is coprime to $a-b$, all lattice points of the polytope of $f$ lie below $\Lambda$, except $(0,a)$ and $(d,b)$. Let $f=\sum_{j=0}^{d} f_j y^j$ with $f_j\in \mathbb{K}[x]$, and let $n_j:=\deg_x(f_j)$. We get that 
\begin{equation}\label{ineq_facette}
n_0=a,\quad n_d=b,\quad {\rm and} \quad d n_j+(a-b)j<ad  \quad \forall\,\, j\ne 0,d.
\end{equation}
By Lemma \ref{lemma:genericdisc_p0}, we have
\[
\Delta_y(f) = (-1)^{d(d-1)/2} d^d f_d^{d-1} f_0^{d-1} + o(f_0^{d-1}) \; \]
as a polynomial in $f_0$. Let $c f_0^{\beta_0}\cdots f_d^{\beta_d}$ be a monomial appearing in $\Delta_y(f)$, with $c\in \mathbb{K}^*$. It is a well known fact that $\Delta_y(f)$ is a homogeneous polynomial in $(f_0,\ldots,f_d)$ of degree $2(d-1)$, and a quasi-homogeneous polynomial of weighted degree $d(d-1)$ with respect to the weight $(0,1,\ldots,d)$.   In other words, we have:
\begin{equation}\label{homog}
\sum_{j=0}^d \beta_j =2(d-1)\quad {\rm and}\quad \sum_{j=0}^d j\beta_j=d(d-1).
\end{equation}
We have $\deg_x(f_d^{d-1} f_0^{d-1})=(a+b)(d-1)$ while $\deg_x(\Delta_y(f))=d-1$ by minimality of $f$. Suppose that $a+b > 1$. Then there must appear at least another monomial  $c f_0^{\beta_0}\cdots f_d^{\beta_d}$ in $\Delta_y(f)$ with a nonzero coefficient $c\in \mathbb{K}^*$ and such that
\[
\deg_x(f_0^{\beta_0}\cdots f_d^{\beta_d})=(a+b)(d-1).
\]
By (\ref{homog}), and since $d>1$, we see that $f_d^{d-1} f_0^{d-1}$ is the only monomial in $\Delta_y(f)$ that involves only $f_0$ and $f_d$. Hence there exists at least one exponent $\beta_j>0$ for some $0<j<d$. Since $a-b>0$ by assumption, (\ref{ineq_facette}) and (\ref{homog}) lead to a strict inequality
\begin{eqnarray*}
\qquad\qquad \deg_x(f_0^{\beta_0}\cdots f_d^{\beta_d})=\sum_{j=0}^d \beta_j n_j & < & \sum_{j=0}^d \beta_j \Big(a-\frac{j(a-b)}{d}\Big)\\
& = & a\sum_{j=0}^d \beta_j+\frac{(a-b)}{d}\sum_{j=0}^d j\beta_j\\
& = & 2a(d-1)-(a-b)(d-1) \\
& =  & (a+b)(d-1),
\end{eqnarray*}
leading to a contradiction. Hence $a+b=1$, that is $a=1$ and $b=0$ since we assumed $a-b>0$. It follows that $f(x,y)=g(y)+\lambda x$ for some $g\in \mathbb{K}[y]$ of degree $d$ and for some $\lambda \in \mathbb{K}^*$. $\hfill\square$

\section{A uniform lower bound for reducible polynomials}\label{Sfin}

We now focus on the non monic reducible case and we prove Theorem \ref{treducible} of the introduction: all polynomials $f\in \mathbb{K}[x,y]$ with non constant discriminant satisfy
\[
\deg_x \Delta_y(f) \ge \Big\lceil \frac{d_y-1}{2}\Big\rceil
\]
and we have a complete classification of polynomials for which equality holds.
The proof requires some preliminary lemmas. In order to study the discriminant of reducible polynomials, it is more convenient 
to consider homogeneous polynomials in $Y=(Y_0:Y_1)$. The homogeneity in $x$
is not necessary. We thus consider polynomials $F\in \mathbb{K}[x][Y]$. 

\begin{lemma}\label{lemma:reducible1}
Let $F \in \mathbb{K}[x][Y]$ be a squarefree polynomial of degree $\deg_Y F = d \ge 0$
with no factor in $\mathbb{K}[x]$.
Assume that $F$ has only linear factors.
Then exactly one of the following occurs:
\begin{enumerate}
\item\label{case1} $\deg_x \Delta_Y F = 0$ and $F$ is $G$--equivalent to some polynomial of $\mathbb{K}[Y]$.
\item\label{case2} $d = 2$   and $\deg_x \Delta_Y F \ge 2      > \frac d2 $.
\item\label{case3} $d \ge 3$ and $\deg_x \Delta_Y F \ge 2(d-2) > \frac d2$.
\end{enumerate}
\end{lemma}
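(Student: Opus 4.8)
Since $F$ has only linear factors and no factor in $\mathbb{K}[x]$, we may write $F = \prod_{i=1}^d L_i$ where $L_i = a_i(x)Y_0 - b_i(x)Y_1$ with $(a_i,b_i)$ coprime in $\mathbb{K}[x]$ and no $L_i$ proportional (over $\mathbb{K}(x)$) to another, by squarefreeness. The first observation is that $\Delta_Y F$ is, up to a nonzero constant, the product of the pairwise "cross-resultants": $\Delta_Y F = \pm \prod_{i<j} \mathrm{Res}_Y(L_i,L_j)^2 = \pm \prod_{i<j}(a_i b_j - a_j b_i)^2$. So $\deg_x \Delta_Y F = 2\sum_{i<j} \deg_x(a_ib_j - a_jb_i)$, and the whole problem becomes: how small can $\sum_{i<j}\deg_x(a_ib_j-a_jb_i)$ be, given that the $d$ points $[a_i:b_i]\in\mathbb{P}^1(\mathbb{K}(x))$ are pairwise distinct?

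The plan is then to analyze this combinatorially. First I would dispose of case~\eqref{case1}: if every $a_ib_j - a_jb_i$ is a nonzero constant, then all the $L_i$ are linear combinations of two fixed linear forms $L,L'$ with constant coefficients (choose $L = L_1$, $L' = L_2$; then $L_i = \lambda_i L + \mu_i L'$ with $\lambda_i,\mu_i\in\mathbb{K}$ forced by the constancy of the cross terms — this is a short linear-algebra argument using that $L_1,L_2$ are $\mathbb{K}(x)$-independent), and conjugating by the matrix in $G$ sending $(L,L')$ to $(Y_0,Y_1)$ puts $F$ into $\mathbb{K}[Y]$; conversely such $F$ has constant discriminant. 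For $d\le 1$ the discriminant is a nonzero constant and we are in case~\eqref{case1} trivially. For $d=2$, if $\Delta_Y F$ is non-constant then $\deg_x\Delta_Y F = 2\deg_x(a_1b_2-a_2b_1)\ge 2$, which is case~\eqref{case2}.

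The heart is case~\eqref{case3}, $d\ge 3$ with non-constant discriminant. Here I would argue that if $F$ is not $G$-equivalent to an element of $\mathbb{K}[Y]$, then at most one of the $d$ linear factors can be a "constant-coefficient" form relative to the others; more precisely, I claim that among any three of the factors, at least two of the three cross-resultants are non-constant. Indeed, after a $G$-change of coordinates we may assume $L_1 = Y_0$ and $L_2 = Y_1$ (since $\mathrm{Res}_Y(L_1,L_2)\ne 0$). If $\mathrm{Res}_Y(L_1,L_3)$ and $\mathrm{Res}_Y(L_2,L_3)$ are both constant, then $L_3 = c_0 Y_0 + c_1 Y_1$ has constant coefficients; iterating, the set $S$ of indices $i$ with $L_i$ of constant coefficients (in these coordinates) contributes only constants among themselves, but each such $L_i$ paired with any $L_j$, $j\notin S$, gives a non-constant cross-resultant — because if it were constant too, $L_j$ would also have constant coefficients, i.e. $j\in S$. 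Since we are not in case~\eqref{case1}, the complement of $S$ is nonempty, say $|S| = d - k$ with $k\ge 1$; but also $S$ cannot be all of $\{1,\dots,d\}$. If $k\ge 2$, then the number of non-constant cross-resultants is at least $k(d-k)\ge 2(d-2)$ (minimized at $k=2$ or $k=d-2$), each contributing at least $2$ to $\deg_x\Delta_Y F$, giving $\deg_x\Delta_Y F \ge 2\cdot 2(d-2)/1$... here I must be careful: each non-constant cross term contributes $\ge 1$ to $\sum_{i<j}\deg_x(\cdots)$, hence $\ge 2$ to $\deg_x\Delta_Y F$, so $\deg_x\Delta_Y F \ge 2k(d-k)\ge 4(d-2)$ when $k\ge 2$, more than enough. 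The delicate case is $k=1$: a single factor $L_j$ with non-constant coefficients, all others with constant coefficients in suitable coordinates. Then the non-constant cross-resultants are exactly $\mathrm{Res}_Y(L_i,L_j)$ for $i\ne j$, that is $d-1$ of them; I need each to contribute enough, and crucially I must rule out that $\deg_x\Delta_Y F$ is too small. Writing $L_j = a(x)Y_0 - b(x)Y_1$ and $L_i = \lambda_i Y_0 + \mu_i Y_1$ with $(\lambda_i:\mu_i)$ distinct points of $\mathbb{P}^1(\mathbb{K})$, the cross-resultant is $\lambda_i b + \mu_i a$. If $\max(\deg a,\deg b) = e\ge 1$, these $d-1$ linear forms $\lambda_i b + \mu_i a$ in the two polynomials $a,b$ can have total degree as low as... generically $e$ each, giving $\deg_x\Delta_Y F = 2\sum \deg(\lambda_i b+\mu_i a)$; the minimum of $\sum_i \deg(\lambda_i b + \mu_i a)$ over the $\ge d-1$ distinct ratios, with $\mathrm{lcm}$-type constraints, needs a short argument — at most one of the forms $\lambda_i b + \mu_i a$ can drop below degree $e$ (two of them dropping would force $a,b$ both of degree $<e$ by linear independence). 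So at least $d-2$ of them have degree exactly $e\ge 1$, yielding $\sum_i \deg(\lambda_i b+\mu_i a)\ge (d-2)e\ge d-2$, hence $\deg_x\Delta_Y F\ge 2(d-2)$, which is exactly the bound; and $2(d-2) > d/2$ for $d\ge 3$ is immediate.

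The main obstacle I anticipate is precisely this last case-$k=1$ bookkeeping: getting the sharp constant $2(d-2)$ rather than something weaker, which hinges on the "at most one form can have low degree" dichotomy and on correctly tracking the factor of $2$ coming from squaring in the discriminant formula. The rest — the structure theorem placing $F$ in $\mathbb{K}[Y]$, and the $d=2$ bound — is routine linear algebra over $\mathbb{K}(x)$ plus the standard resultant factorization of the discriminant. I would also double-check the mutual exclusivity claim (exactly one of the three cases): case~\eqref{case1} is $\deg_x\Delta_Y F = 0$, cases~\eqref{case2},\eqref{case3} have $\deg_x\Delta_Y F>0$, and \eqref{case2} versus \eqref{case3} is separated by the value of $d$, so exclusivity is automatic.
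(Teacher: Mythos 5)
Your overall strategy is essentially the paper's: write $\Delta_Y F=\pm\prod_{i<j}\Res_Y(L_i,L_j)^2$, identify a maximal set of factors that can be made constant-coefficient after a $G$-normalisation, and count the remaining nonconstant pairwise resultants, each contributing at least $2$ to $\deg_x\Delta_Y F$. However, two steps are wrong as written. First, the normalisation ``we may assume $L_1=Y_0$, $L_2=Y_1$ since $\Res_Y(L_1,L_2)\neq 0$'' is not justified: the matrix built from the coefficients of $L_1,L_2$ lies in $G=GL_2(\mathbb{K}[x])$ only when its determinant, which is $\pm\Res_Y(L_1,L_2)$, is a nonzero \emph{constant}; a nonconstant resultant only gives a $GL_2(\mathbb{K}(x))$ change of coordinates, which does not preserve the $x$-degrees you are counting. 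You must either first pick (after reindexing) a pair with constant resultant, or treat separately the case where no such pair exists --- in that case all $\binom d2$ resultants are nonconstant, so $\deg_x\Delta_Y F\ge d(d-1)$ and you are done; this is exactly the case $m=1$ in the paper's proof, which your write-up silently skips.

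Second, the claim ``each $L_i$ with $i\in S$ paired with any $L_j$, $j\notin S$, gives a non-constant cross-resultant, because otherwise $L_j$ would also have constant coefficients'' is false: $\Res_Y(Y_1,\,Y_0+xY_1)=\pm 1$ is constant while $Y_0+xY_1\notin\mathbb{K}[Y]$ (for the same reason, your preliminary claim that among any three factors at least two of the three cross-resultants are nonconstant fails, e.g.\ for $Y_0,Y_1,Y_0+Y_1$ inside a larger $F$). One constant resultant with a single constant-coefficient form forces nothing; you need constant resultants with \emph{two} independent constant forms to conclude $L_j\in\mathbb{K}[Y]$. The correct statement --- each $j\notin S$ has at most one constant resultant with members of $S$ --- is precisely what the paper uses, and it lowers your count of nonconstant cross pairs from $|S|\,k$ to $(|S|-1)k$, which is still $\ge d-2$ when $k\ge 2$ (and $|S|\ge 2$), so the bound $2(d-2)$ survives, just not with the $4(d-2)$ margin you claim. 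Your treatment of the delicate case $k=1$ (at most one of the forms $\lambda_i b+\mu_i a$ can drop below $\max(\deg a,\deg b)$, by invertibility of a $2\times 2$ constant matrix) is correct and gives the sharp bound there; with the two repairs above, your argument becomes a valid variant of the paper's proof, which instead organises the cases by the maximal cardinality $m$ of a subset $I$ with $\Delta_Y F_I\in\mathbb{K}$ and handles $m=1$, $m=d$ and $2\le m<d$ separately.
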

\begin{proof}
The cases $d = 0$ and $d = 1$ are trivially in case (\ref{case1}).
We now assume that $d \ge 2$.
We have $F = \prod_{i=1}^d F_i$ with $F_i = a_i Y_0 + b_i Y_1$, for
some $a_i$ and $b_i$ in $\mathbb{K}[x]$.
For all nonempty subset $I \subset \{1,\dots,d\}$, we write $F_I = \prod_{i\in I} F_i$. If $I$ has only $1$ element, then clearly $\Delta_Y F_I \in \mathbb{K}$.
Among all subsets $I$ such that $\Delta_Y F_I \in \mathbb{K}$, we consider
one with a maximal number of elements and write $m$ for its cardinality.
We have $1 \le m \le d$.

Consider first the case $m = 1$.
For all $i\neq j$, we have $\deg_x \Res_Y(F_i,F_j) \ge 1$.
This implies that $\deg_x \Delta_Y F \ge d(d-1)$.
This proves the case (\ref{case2}) if $d=2$ and the case (\ref{case3}) if $d>2$.

Consider now the case $2 \le m$. We can assume that $I = \{1,2,\dots,m\}$.
We have then $\Res(F_1,F_2) \in \mathbb{K}$. The matrix
$\sigma = \begin{pmatrix} b_2 & -b_1 \\ -a_2 & a_1 \end{pmatrix}$
is therefore an element of $GL_2(\mathbb{K}[x])$.
Via the action of $\sigma$, $F_1$ and $F_2$ are transformed into
$Y_0$ and $Y_1$. Without loss of generality, we assume that
$F_1 = Y_0$ and $F_2 = Y_1$. For all $3 \le i \le m$, we have
$\Res_Y(F_i,Y_0) \in \mathbb{K}$, hence $b_i \in \mathbb{K}$. Similarly, we have
$\Res_Y(F_i,Y_1) \in \mathbb{K}$, hence $a_i \in \mathbb{K}$. This proves that
$F_i \in \mathbb{K}[Y]$ for all $i \in I$.
If $m=d$, then we have proved that $F$ is equivalent to a polynomial in $\mathbb{K}[Y]$, hence we are in case (\ref{case1}). 

It remains to consider the case $2\le m<d$. This case is possible only if
$d \ge 3$.
As before, we can assume that $I = \{1,2,\dots,m\}$ and $F_i \in \mathbb{K}[Y]$ for
all $i \in I$.
For an integer $j \not\in I$, there exists at most one value of $i \in I$
such that $\Res_Y(F_i,F_j) \in \mathbb{K}$. Otherwise, using
a similar argument as before, we would have $F_j \in \mathbb{K}[Y]$ and
$\Res_Y(F_i,F_j) \in \mathbb{K}$ for all $i \in I$, contradicting the maximality of $I$.
Since each $F_j$ for $j\not\in I$ has at least $m-1$ nonconstant
resultants with $F_i$ for $i \in I$, this proves that
$\deg_x \Delta_Y F \ge 2(m-1)(d-m)$. It is an exercise to verify
the inequalities $2(m-1)(d-m) \ge 2(d-2) > \frac d2$.
\end{proof}

\begin{lemma}\label{lemma:minimalresultant1}
Let $F \in \mathbb{K}[x][Y]$ be an irreducible polynomial of degree $d \ge 2$.
Assume that $F$ is minimal. Consider an integer $n$ and polynomials
$F_i = a_iY_0 + b_iY_1$, for $1 \le i \le n$ and $a_i,b_i \in \mathbb{K}$, that are
pairwise coprime. 
If $\Res_Y(F,F_i) \in \mathbb{K}$ for all $1\le i \le n$, then $n \le 1$.
\end{lemma}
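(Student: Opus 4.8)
The plan is to translate the hypothesis $\Res_Y(F,F_i)\in\mathbb{K}$ into a statement about the location of the curve $C\subset\mathbb{P}^1\times\mathbb{P}^1$ defined by $F$, and then to invoke the unicuspidality at infinity that minimality forces through Corollary \ref{cmin}. Since each $F_i=a_iY_0+b_iY_1$ has \emph{constant} coefficients, it cuts out a fixed point $P_i\in\mathbb{P}^1$, and the resultant $\Res_Y(F,F_i)$ equals, up to a nonzero constant of $\mathbb{K}$, the one-variable polynomial $F(x,P_i)\in\mathbb{K}[x]$ obtained by specializing $Y$ at $P_i$. Because $F$ is irreducible of degree $d\ge 2$, the linear form $F_i$ does not divide $F$, so $F(x,P_i)$ is a nonzero polynomial; hence $\Res_Y(F,F_i)\in\mathbb{K}$ is equivalent to $F(x,P_i)\in\mathbb{K}^{*}$, i.e.\ to the fact that $C$ does not meet the horizontal line $\mathbb{A}^1\times\{P_i\}$ inside $\mathbb{A}^1\times\mathbb{P}^1$.

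First I would note that $d_x:=\deg_xF\ge 1$, since otherwise $F\in\mathbb{K}[Y]$ would be a binary form of degree $d\ge 2$ over the algebraically closed field $\mathbb{K}$, hence a product of linear forms, contradicting irreducibility. The curve $C$, having bidegree $(d_x,d)$ in $\mathbb{P}^1\times\mathbb{P}^1$, satisfies $C\cdot(\mathbb{P}^1\times\{P_i\})=d_x\ge 1$, so $C$ does meet the fibre $\mathbb{P}^1\times\{P_i\}$; by the previous paragraph it cannot meet it in the affine part $\mathbb{A}^1\times\{P_i\}$, hence $(\infty,P_i)\in C$ for each $i$, where $(\infty,P_i)$ denotes the point of the line $\{x=\infty\}$ with second coordinate $P_i$.

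To finish, suppose $n\ge 2$. Pairwise coprimality of $F_1$ and $F_2$ gives $P_1\ne P_2$, so $(\infty,P_1)$ and $(\infty,P_2)$ are two distinct points of $C$ lying on the line $x=\infty$. But $F$ is minimal, so by Corollary \ref{cmin} the curve $C$ has a unique place over $x=\infty$, and in particular meets that line in a single point --- a contradiction. Hence $n\le 1$. I do not anticipate a genuine obstacle here; the points requiring care are merely the identification of $\Res_Y(F,F_i)$ with the evaluation of $F$ at the zero of $F_i$ (legitimate precisely because the $F_i$ have constant coefficients) and keeping straight the roles of the $x$- and $Y$-directions at infinity. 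Conceptually this is the ``horizontal'' counterpart of Proposition \ref{prop:resultant1}: there two minimal curves were prevented from meeting in a vertical line using Theorem \ref{t3}, whereas here constancy of the $F_i$ concentrates all possible intersections on the single fibre $x=\infty$, where uniqueness of the place does the work.
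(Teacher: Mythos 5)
Your proof is correct, but it follows a genuinely different route from the paper's. The paper argues algebraically: it reduces to $n=2$, normalises $F_1=Y_0$, $F_2=Y_1$ by the $GL_2(\mathbb{K})$--action, observes that $\Res_Y(F,Y_0)\in\mathbb{K}$ makes the dehomogenisation $F(1,y)$ monic and minimal, hence a coordinate polynomial by Theorem \ref{t3}, and then gets a contradiction from Lemma \ref{lemma:stillirreducible}: the polynomial $F(1,y)-r_1$ with $r_1=\Res_Y(F,Y_1)\in\mathbb{K}$ would be simultaneously irreducible of degree $d\ge 2$ and divisible by $y$. You instead argue geometrically: since each $F_i$ has constant coefficients, $\Res_Y(F,F_i)$ is (up to a nonzero scalar) the specialisation $F(x,P_i)$ at the zero $P_i\in\mathbb{P}^1$ of $F_i$, so a constant resultant means $C$ avoids $\mathbb{A}^1\times\{P_i\}$; combined with $d_x\ge 1$ and $C\cdot(\mathbb{P}^1\times\{P_i\})=d_x>0$ this forces $(\infty,P_i)\in C$, and coprimality of $F_1,F_2$ would put two distinct points of $C$ on the line $x=\infty$, contradicting the unique place over $x=\infty$ given by Corollary \ref{cmin}. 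Your identification of the resultant with the evaluation at $P_i$ is legitimate precisely because the $a_i,b_i$ are constants, and the passage from ``unique place'' to ``unique point'' over $x=\infty$ is valid since every point of $C$ on that line carries at least one branch. What each approach buys: yours is lighter on machinery --- it uses only Corollary \ref{cmin} (i.e.\ Theorem \ref{t1}) and elementary intersection theory on $\mathbb{P}^1\times\mathbb{P}^1$, and in particular does not invoke the Abhyankar--Moh input hidden in Theorem \ref{t3}; the paper's argument, by contrast, stays entirely within resultant/irreducibility manipulations in the algebraic framework of Section \ref{s2} and reuses lemmas (notably Lemma \ref{lemma:stillirreducible}) already needed elsewhere, such as in Proposition \ref{prop:resultant1}.
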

\begin{proof}
It is enough to prove that the case $n=2$ is impossible.
Suppose that two such polynomials exist.
Using the action of $GL_2(\mathbb{K})$, we can assume that $F_1 = Y_0$ and
$F_2 = Y_1$. We write $r_1 = \Res_Y(F,Y_1) \in \mathbb{K}$.
The relation $\Res_Y(F,Y_0) \in \mathbb{K}$ implies that
$F(Y_0,Y_1)$ is monic in $Y_1$. By Theorem \ref{t3}, $F(1,y)$ is equivalent
to $y$ up to an automorphism of $\mathbb{A}^2$, so that we can apply Lemma 
\ref{lemma:stillirreducible} and deduce that $F(1,y)-r_1$ is irreducible
of degree $d>2$. However, it is by construction divisible by $y$.
We get a contradiction.
\end{proof}

\begin{lemma}\label{lemma:reducible2}
Let $F \in \mathbb{K}[x][Y]$ be a squarefree polynomial of degree $\deg_Y F = d \ge 2$
with no factor in $\mathbb{K}[x]$.
Assume that $F=PQ$, where $P$ is irreducible of degree $\deg_Y P \ge 2$,
and $Q$ has only linear factors.
Then
\[ \deg_x \Delta_Y F \ge \big\lceil \frac {d-1}2 \big\rceil \]
Furthermore, equality holds if and only if $F$ is $G$--equivalent to one
of the following exceptional polynomials:
\begin{itemize}
\item (case $d=2$): $Y_0^2+(x+a)Y_1^2$, ($a \in \mathbb{K}$)
\item (case $d=3$): $Y_1(Y_0^2+(x+a)Y_1^2)$, ($a \in \mathbb{K}$)
\item (case $d=4$): $Y_1(Y_0^3+aY_0Y_1^2+(x+b)Y_1^3)$, ($a,b \in \mathbb{K}$)
\item (case $d=4$): $Y_0Y_1(Y_0^2 + (ax+b)Y_0Y_1 + Y_1^2)$, ($a \in \mathbb{K}^*$ and $b \in \mathbb{K}$).
\end{itemize}
\end{lemma}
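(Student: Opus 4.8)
The plan is to split the discriminant multiplicatively as
\[
\Delta_Y F = \Delta_Y P \cdot \Delta_Y Q \cdot \Res_Y(P,Q)^2,
\]
and to control each factor separately. Since $P$ is irreducible of degree $e:=\deg_Y P\ge 2$, Corollary \ref{cmin} gives $\deg_x\Delta_Y P\ge e-1$, with equality exactly when $P$ is minimal. Write $Q=\prod_{i=1}^{s}F_i$ with $F_i=a_iY_0+b_iY_1$ and $e+s=d$. For the factor $\Res_Y(P,Q)^2=\prod_i\Res_Y(P,F_i)^2$, I would invoke Lemma \ref{lemma:minimalresultant1}: if $P$ is minimal, then among the $F_i$ (which we may first assume pairwise coprime, else $\deg_x\Delta_Y Q$ jumps) at most one can have $\Res_Y(P,F_i)\in\mathbb{K}$, so $\deg_x\Res_Y(P,Q)^2\ge 2(s-1)$ when $s\ge 1$. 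Finally $\deg_x\Delta_Y Q$ is handled by Lemma \ref{lemma:reducible1}: either it is $0$ (only when $Q$ is $G$-equivalent to a constant-coefficient form, which severely restricts the $F_i$) or it is at least $2(s-2)$ for $s\ge 3$, at least $2$ for $s=2$.

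Adding these up, in the ``good'' case ($P$ minimal, the $F_i$ pairwise coprime, exactly one $F_i$ with constant resultant against $P$, $Q$ of constant discriminant) one gets
\[
\deg_x\Delta_Y F\ \ge\ (e-1)+0+2(s-1)\ =\ d+s-3.
\]
This is $\ge \lceil(d-1)/2\rceil$ as soon as $s\ge 3$ (indeed $d+s-3\ge d\ge \lceil(d-1)/2\rceil$ once $d\ge 3$), and more generally the other cases of Lemma \ref{lemma:reducible1} and of the resultant count only make the bound larger, so the inequality $\deg_x\Delta_Y F\ge\lceil(d-1)/2\rceil$ follows in every configuration by a short case analysis on $s\in\{0,1,2,\ge 3\}$ (note $s\ge 1$ since $\deg_Y P\ge 2$ forces $s=d-e\le d-2$, and $s=0$ is excluded by hypothesis that $Q$ is nontrivial — if $Q$ is a unit we are in the purely irreducible case already covered). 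I would tabulate: $s=1$ gives $\ge e-1=d-2\ge\lceil(d-1)/2\rceil$ for $d\ge 3$ and the boundary cases $d=2,3,4$ must be inspected by hand; $s=2$ gives $\ge (e-1)+2+2=e+3>d/2$; $s\ge 3$ as above.

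For the equality discussion, I would run the bound backwards: equality forces $P$ minimal (so by Theorem \ref{t3}, $P$ is $G$-equivalent to a coordinate, hence after a $GL_2(\mathbb{K})$-move $P(1,y)$ is a coordinate polynomial and we may normalize), forces the $F_i$ pairwise coprime, forces $\deg_x\Delta_Y Q=0$ hence by Lemma \ref{lemma:reducible1}(\ref{case1}) that $Q$ is $G$-equivalent to a form in $\mathbb{K}[Y]$ — but the same $\sigma\in GL_2(\mathbb{K}[x])$ must be used to keep $\Res_Y(P,F_i)$ under control, so really the whole $F$ is brought into a normalized shape — and forces exactly one $F_i$, say $F_1$, with $\Res_Y(P,F_1)\in\mathbb{K}$ while $\deg_x\Res_Y(P,F_i)=1$ for $i\ge 2$, with no extra slack, which in particular caps $\lceil(d-1)/2\rceil = 2(s-1)+(e-1)$-type balancing and pins down $(e,s)$: one finds $d\le 4$. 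A finite check then isolates the four listed normal forms. The main obstacle I anticipate is precisely this last bookkeeping: the three lower bounds must be made simultaneously sharp by a \emph{single} element of $G$, so I must track how normalizing $P$ via $GL_2(\mathbb{K})$ interacts with the linear factors $F_i$ and with the hypothesis $\deg_Y P\ge 2$; concretely, after sending $P(1,y)\mapsto y$, I must show the surviving linear factor is forced to be $Y_1$ (up to the residual stabilizer), re-homogenize, and read off that $P$ must have Newton polytope forcing $\deg_Y P\in\{2,3\}$, then enumerate. I would also double-check the small-degree corner cases $d=2,3$ directly, since there $\lceil(d-1)/2\rceil$ is small and the asymptotic inequalities $2(d-2)>d/2$ etc.\ are not available.
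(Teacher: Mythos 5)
Your overall strategy (splitting $\Delta_YF$ as $\Delta_YP\cdot\Delta_YQ\cdot\Res_Y(P,Q)^2$, feeding Lemma \ref{lemma:reducible1} into the $Q$-part, Lemma \ref{lemma:minimalresultant1} into the resultant, and normalizing $Q$ into $\mathbb{K}[Y]$ by a single element of $G$) is the same as the paper's, but there are two genuine gaps. First, your only control on $\Res_Y(P,Q)$ comes from Lemma \ref{lemma:minimalresultant1} and therefore requires $P$ minimal; your claim that the remaining configurations ``only make the bound larger'' is false, because dropping minimality gains just $1$ on $\deg_x\Delta_YP$ (from $e-1$ to $\ge e$) while losing the whole $2(s-1)$ from the resultant. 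For $P$ non-minimal with many linear factors (say $e=2$, $s=4$, $d=6$) your tally only gives $\deg_x\Delta_YF\ge 2<\lceil(d-1)/2\rceil=3$, so even the inequality is unproven in that range. What is missing is the interpolation bound the paper uses: an irreducible $P$ of degree $e$ cannot have constant resultant with more than $e$ pairwise coprime linear forms of $\mathbb{K}[Y]$ (otherwise interpolation would put $P$ in $\mathbb{K}[Y]$), whence $\deg_x\Res_Y(P,Q)\ge s-e$, which rescues exactly this case.

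Second, and more seriously for the classification, your equality analysis starts from ``equality forces $P$ minimal'', which is false: the fourth exceptional form $Y_0Y_1(Y_0^2+(ax+b)Y_0Y_1+Y_1^2)$ has $P=Y_0^2+(ax+b)Y_0Y_1+Y_1^2$ with $\deg_x\Delta_YP=2=d_P$ (so $P$ is not minimal), while all resultants and $\Delta_YQ$ are constant, giving $\deg_x\Delta_YF=2=\lceil(4-1)/2\rceil$. Equality with non-minimal $P$ can occur precisely when $d_Q=d_P$, and the paper needs a separate argument there (interpolation gives $P=\lambda QR_0+b$ with $b\in\mathbb{K}[Y]$, and a leading-coefficient computation shows $\deg_x\Delta_YP=2(d_P-1)\deg_xP$, forcing $\deg_xP=1$ and $d_P=2$); your enumeration would miss this family entirely. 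Two smaller points: the hypothesis does not exclude $Q$ being a unit, and the $d=2$ normal form $Y_0^2+(x+a)Y_1^2$ lives exactly in that case, so $s=0$ must be treated rather than declared ``already covered''; and the reduction of a minimal $P$ to a normal form should quote Theorem \ref{nonmonicintro} (legitimate because the relevant degrees $d_P=2,3$ are prime), not Theorem \ref{t3}: minimality does not imply monicity, and by Theorem \ref{contrexintro} a minimal polynomial need not be $G$-equivalent to a coordinate.
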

\begin{proof}
We write $F = PQ$.
In order to shorten some expressions, we write $d_P = \deg_Y P$
and $d_P = \deg_Y Q$. We have $d = d_P + d_Q$.
By Theorem \ref{t1} we already have $\deg_x \Delta_Y P \ge d_P-1$.
 The proof splits into different cases according to which case
corresponds to the polynomial $F$ in Lemma \ref{lemma:reducible1}.

Case (0): if $d_Q = 0$. We have
$\deg_x \Delta_Y F = \deg_x \Delta_Y P \ge d-1 
\ge \big\lceil \frac {d-1}2 \big\rceil $.
Equality holds if and only if $d=2$ and $P$ is minimal.
By Theorem \ref{nonmonicintro}, $P$ is $G$--equivalent to a polynomial of the
form $Y_0^2 + (x+c)Y_1^2$, with $c \in \mathbb{K}$.

Case (\ref{case1}): if $d_Q > 0$ and $\deg_x \Delta_Y Q = 0$.
By Lemma \ref{lemma:reducible1}, we can assume that $Q \in \mathbb{K}[Y]$.

Sub-case (\ref{case1}.1): if $d_Q \le d_H-2$. Here, we simply have
$\deg_x \Delta_Y F \ge d_P-1 \ge \frac {d_P+d_Q}2$. In this case, the announced inequality is proved. We then observe that equality implies that
$P$ is minimal, $d_Q = d_P-2$, and $\Res_Y(P,Q)\in \mathbb{K}$.
By Lemma \ref{lemma:minimalresultant1}, this is possible only if
$d_Q=1$ and $d_P=3$. By Theorem \ref{nonmonicintro}, we deduce that
$P$ is $G$--equivalent to a polynomial of the form
$Y_0^3+aY_0Y_1^2+(x+b)Y_1^3$. In this case, $Q$ can only be $Y_1$.

Sub-case (\ref{case1}.2): if $d_Q = d_Q-1$, we have
$\deg_x \Delta_Y F \ge d_P-1 = \frac {d-1}2$. This proves the inequality.
The equality holds if and only if $P$ is minimal and $\Res_Y(P,Q) \in \mathbb{K}$.
By Lemma \ref{lemma:minimalresultant1}, this is possible only if
$d_Q=1$ and $d_P=2$.
By Theorem \ref{nonmonicintro}, we deduce that
$P$ is $G$--equivalent to a polynomial of the form
$Y_0^2+(x+a)Y_1^2$. In this case, $Q$ can only be $Y_1$.

Sub-case (\ref{case1}.3): if $d_Q = d_P$. In this case, we have $d = 2d_P$.
If $P$ is minimal, then by Lemma \ref{lemma:minimalresultant1},
$\deg_x \Res_Y(P,Q) \ge d_P-1$, hence
$\deg_x \Delta_Y F \ge d_P-1 + 2(d_P-1)$. This is always larger than $\frac d2$.
If $P$ is not minimal, we have $\deg_x \Delta_Y P \ge d_P$, whence
the inequalities
$\deg_x \Delta_Y F \ge d_P = \frac d2$. This proves the inequality.
We see here that equality holds only if $\deg_x \Delta_Y P = d_P$ and
$\Res_Y(P,Q) \in \mathbb{K}$. Let $Q = \prod_{i=1}^{d_P} Q_i$ be the factorisation of $Q$
into linear factors in $\mathbb{K}[Y]$, and let $Q_0$ be another linear polynomial 
in $\mathbb{K}[Y]$, coprime to $Q$. We define $R_0 = \Res_Y(P,Q_0) \in \mathbb{K}[x]$.
Using interpolation at the $Q_i$'s, we see that
$P$ can be written as $P = \lambda Q R_0 + b$,
with $\lambda \in \mathbb{K}^*$ and $b \in \mathbb{K}[Y]$. We clearly have
$\deg_x R_0 = \deg_x P = \deg_x F$. We denote by $r_0 \in \mathbb{K}^*$ the leading
coefficient of $R_0$.
$\Delta_Y P$ is an homogeneous polynomial of degree $2(d_P-1)$ in terms
of the coefficients of $P$, hence of degree at most $D=2(d_P-1)\deg_x P$ in $x$.
The coefficient in $x^D$ in its expansion is equal to
$\Disc_Y (\lambda Q r_0)$, which is not zero since $Q$ is squarefree.
This proves that $\deg_x \Delta_Y P = 2(d_P-1)\deg_x P$.
Since this is also equal to $d_P$, the only possibility is $\deg_x P = 1$
and $d_P = 2$. Using the action of $GL_2(\mathbb{K})$, we can therefore assume that
$Q = Y_0Y_1$. Under all these conditions, $P$ is of the form
$P = Y_0^2 + (ax+b)Y_0Y_1 + Y_1^2$, for some $a \in \mathbb{K}^*$ and $b \in \mathbb{K}$.

Sub-case (\ref{case1}.4): if $d_Q \ge d_P+1$. It is impossible for $P$ to have
constant resultants with strictly more than $d_P$ linear polynomials
in $\mathbb{K}[Y]$, since otherwise, by interpolation, it would have coefficients
in $\mathbb{K}$, contradicting its irreducibility. This proves that
$\deg_x \Res_Y(P,Q) \ge d_Q - d_P$. We then have the inequalities
$\deg_x \Delta_Y F \ge d_P-1 + 2(d_Q-d_P) \ge d_Q \ge \frac {d+1}2 $.
This proves the announced inequality and in this case an equality is impossible.

Cases (\ref{case2}) and (\ref{case3}): in the remaining cases,
we have $d_Q \ge 2$ and $\deg_x \Delta_Y Q > \frac {d_Q}2$.
This gives
$\deg_x \Delta_Y F > d_P-1 + \frac {d_Q}2
\ge \frac {d_P}2 + \frac {d_Q}2 = \frac d2$, whence the conclusion.
\end{proof}

\begin{lemma}\label{lemma:resultant1}
Let $q = y^2+ay+b$ be a polynomial in $\mathbb{K}[x][y]$, with
$a$ and $b$ in $\mathbb{K}[x]$. Assume that $\deg_x a^2-4b$ is odd.

For a polynomial $p \in \mathbb{K}[x][y]$, we have
$ \Res_y(p,q) \in \mathbb{K} $ if and only if
$p = \alpha q + \beta$ for some $\alpha \in \mathbb{K}[x][y]$ and $\beta \in \mathbb{K}$.
\end{lemma}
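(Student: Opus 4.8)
The plan is to reduce everything to an explicit formula for $\Res_y(p,q)$ and then to use the parity hypothesis to control $x$-degrees. First I would exploit that $q$ is monic of degree $2$ in $y$: Euclidean division gives $p=\alpha q+\rho$ with $\alpha\in\mathbb{K}[x][y]$ and $\rho=uy+v$ for some $u,v\in\mathbb{K}[x]$, and since $\lc_y(q)=1$ the resultant is insensitive to multiples of $q$, so $\Res_y(p,q)=\Res_y(\rho,q)$. Writing $y_1,y_2$ for the two roots of $q$ over $\overline{\mathbb{K}(x)}$, the product formula for the resultant together with $\lc_y(q)=1$ gives $\Res_y(p,q)=p(y_1)p(y_2)=\rho(y_1)\rho(y_2)$ (the last equality because $q(y_i)=0$). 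Using $y_1+y_2=-a$ and $y_1y_2=b$ this yields the key identity
\[
\Res_y(p,q)=\rho(y_1)\rho(y_2)=v^2-auv+bu^2 .
\]
The "if" direction is then immediate: if $p=\alpha q+\beta$ with $\beta\in\mathbb{K}$ then $\rho=\beta$, and the identity gives $\Res_y(p,q)=\beta^2\in\mathbb{K}$.

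For the converse I would assume $\Res_y(p,q)=v^2-auv+bu^2\in\mathbb{K}$ and argue by contradiction, supposing $u\neq 0$. The idea is to complete the square: since $\mathrm{char}\,\mathbb{K}=0$, setting $w:=v-\tfrac a2 u\in\mathbb{K}[x]$ transforms the identity into
\[
4\,\Res_y(p,q)=(2w)^2-(a^2-4b)\,u^2 ,
\]
so $(2w)^2-(a^2-4b)u^2\in\mathbb{K}$. Now I would compare $x$-degrees: $\deg_x\big((2w)^2\big)$ is even (or $(2w)^2$ vanishes), whereas $\deg_x\big((a^2-4b)u^2\big)=\deg_x(a^2-4b)+2\deg_x u$ is odd by hypothesis — in particular finite and at least $1$, using that $a^2-4b\neq 0$ because it has odd degree. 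Hence the two summands have distinct $x$-degrees, no top-degree cancellation occurs, and the right-hand side has $x$-degree $\geq 1$, contradicting membership in $\mathbb{K}$. Therefore $u=0$, and then $\Res_y(p,q)=v^2\in\mathbb{K}$; since $\mathbb{K}[x]$ is an integral domain this forces $v\in\mathbb{K}$. Taking $\beta:=v$ gives $p=\alpha q+\beta$ with $\beta\in\mathbb{K}$, as desired.

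I do not expect a real obstacle here: the statement is elementary once the explicit resultant formula is available. The only two points deserving attention are (i) the reduction $\Res_y(p,q)=\Res_y(\rho,q)$, which follows cleanly from the monicity of $q$ and the product-over-roots expression; and (ii) making sure the odd-degree hypothesis is invoked exactly where it is indispensable, namely to guarantee that the leading terms of $(2w)^2$ and $(a^2-4b)u^2$ sit in different degrees and therefore cannot cancel in the completed-square identity. That single parity observation is the whole content of the lemma.
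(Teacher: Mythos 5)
Your proof is correct and follows essentially the same route as the paper: Euclidean division by the monic polynomial $q$, the explicit formula $\Res_y(p,q)=v^2-auv+bu^2=(v-\tfrac a2u)^2-\tfrac{a^2-4b}{4}u^2$, and the parity-of-degrees argument forcing $u=0$ and $v\in\mathbb{K}$. The only difference is presentational (you derive the formula via the roots of $q$ and argue by contradiction, while the paper states it directly), so there is nothing to add.
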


\begin{proof}
Let $p = \alpha q + uy + v$ be the euclidean division of $p$ by $q$, with
$u$ and $v$ in $\mathbb{K}[x]$. We have
$ \Res_y(p,q) = \Res_y (uy+v,q) = (v-au/2)^2-\frac {a^2-4b}{4} u^2$.
By assumption, this is an element of $\mathbb{K}$.
Since $\deg_x a^2-4b$ is odd, inspecting degrees shows that this is
possible only if $u = 0$ and $v-au/2 \in \mathbb{K}$. This gives the conclusion.
\end{proof}

We are now ready to prove Theorem \ref{treducible} that we reformulate in a more convenient form for the proof:

\begin{theorem}\label{t4}
Let $F \in \mathbb{K}[x][Y]$ be a squarefree polynomial of degree $\deg_Y F = d \ge 0$
with no factor in $\mathbb{K}[x]$.
Then exactly one of the following occurs:
\begin{enumerate}
\item $\deg_x \Delta_Y F = 0$ and $F$ is $G$--equivalent to some polynomial of $\mathbb{K}[Y]$.
\item $d\ge 2$ and $\deg_x \Delta_Y F \ge \big\lceil \frac {d-1}2 \big\rceil$.
\end{enumerate}

Furthermore, if $d\ge 2$, equality $\deg_x \Delta_Y F = \big\lceil \frac {d-1}2 \big\rceil$ occurs if and only if $F$ is $G$--equivalent
to one of the following polynomials:
\begin{itemize}
\item (case $d$ odd): $Y_1 \prod_{i=1}^n (Y_0^2+(x+a_i)Y_1^2)$ ($a_i \in \mathbb{K}$).
\item (case $d$ even): $~~\prod_{i=1}^n (Y_0^2+(x+a_i)Y_1^2)$ ($a_i \in \mathbb{K}$).
\item (case $d=4$): $Y_1(Y_0^3+aY_0Y_1^2+(x+b)Y_1^3)$ ($a,b \in \mathbb{K}$)
\item (case $d=4$): $Y_0Y_1(Y_0^2 + (ax+b)Y_0Y_1 + Y_1^2)$ ($a \in \mathbb{K}^*$ and $b \in \mathbb{K}$).
\end{itemize}
\end{theorem}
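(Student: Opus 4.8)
}
The plan is to prove both the inequality and the classification by induction on the number $n$ of irreducible factors of $F$ of $Y$-degree at least $2$. The base cases $n=0$ and $n=1$ are exactly Lemma \ref{lemma:reducible1} and Lemma \ref{lemma:reducible2}: if $F$ splits into linear factors one is either in case (1) or has a strict inequality $\deg_x\Delta_Y F>\frac d2\ge\lceil\frac{d-1}2\rceil$, and if $F=PQ$ with $P$ irreducible of $Y$-degree $\ge 2$ and $Q$ a product of linear factors one gets the bound together with the four exceptional forms, the first two of which are the $n=1$ members of the even and odd families appearing in the statement.

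For the inductive step assume $n\ge 2$ and factor $F=PH$ with $P$ an irreducible factor of $Y$-degree $d_P\ge 2$ and $H=F/P$. Then $H$ is squarefree, has no factor in $\mathbb{K}[x]$, and has $n-1\ge 1$ nonlinear factors; in particular $H$ is \emph{not} $G$-equivalent to a form in $\mathbb{K}[Y]$, since such a form splits into linear factors over $\mathbb{K}$ and hence its image under any element of $G$ splits into linear factors over $\mathbb{K}[x]$. By the inductive hypothesis $\deg_x\Delta_Y H\ge\lceil\frac{d_H-1}2\rceil$, with equality only if $H$ is $G$-equivalent to one of the already classified forms. Combining the multiplicativity formula
\[
\deg_x\Delta_Y F=\deg_x\Delta_Y P+\deg_x\Delta_Y H+2\deg_x\Res_Y(P,H),
\]
the bound $\deg_x\Delta_Y P\ge d_P-1$ of Corollary \ref{cmin}, and the elementary inequality $(d_P-1)+\lceil\frac{d_H-1}2\rceil\ge\lceil\frac{d-1}2\rceil$ (valid for $d_P\ge 2$, with equality iff $d_P=2$, or $d_P=3$ and $d$ even), one obtains $\deg_x\Delta_Y F\ge\lceil\frac{d-1}2\rceil$; in particular $\deg_x\Delta_Y F\ge d_P-1\ge 1$, so we are not in case (1).

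The substantive point is the equality case. Equality in the chain above forces, for \emph{every} way of peeling off a nonlinear factor $P_i$: that $P_i$ is minimal; that $\Res_Y(P_i,P_j)$ and $\Res_Y(P_i,L)$ are nonzero constants for every other irreducible factor $P_j$ and every linear factor $L$ of $F$; that $\deg_Y P_i\in\{2,3\}$, with $\deg_Y P_i=3$ only when $d$ is even; and that $F/P_i$ is one of the classified forms. I would first exclude minimal cubic factors when $n\ge 2$. If $P_i$ is such a factor then $d$ is even, $F/P_i$ has odd $Y$-degree, and when $n\ge 3$ this forces $F/P_i=Y_1\prod_{j}(Y_0^2+(x+a_j)Y_1^2)$; since $\Res_Y(P_i,Y_1)\in\mathbb{K}$ makes $P_i$ monic in $Y_0$, Lemma \ref{lemma:resultant1} applied to each $\Res_Y(P_i,Y_0^2+(x+a_j)Y_1^2)\in\mathbb{K}$ writes $P_i=(\ell Y_0+A_jY_1)(Y_0^2+(x+a_j)Y_1^2)+B_jY_1^3$ with $\ell\in\mathbb{K}^*$ fixed, whose $Y_0Y_1^2$-coefficient $\ell(x+a_j)$ would then have to be independent of $j$, contradicting the distinctness of the $a_j$. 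For $n=2$, a cubic factor forces $F/P_i$ to be one of the exceptional $d=4$ forms; minimality of the complementary quadratic and oddness of its discriminant degree pin its shape down, after which Lemma \ref{lemma:resultant1} and a one-line computation show that the $x$-degree of the cubic discriminant equals $3$, not $2$ --- a contradiction; and the sporadic configuration $F/P_1=Y_0Y_1(Y_0^2+(ax+b)Y_0Y_1+Y_1^2)$ is killed because $\Res_Y(P_1,Y_0),\Res_Y(P_1,Y_1)\in\mathbb{K}$ make both outer coefficients of $P_1$ constant, so its discriminant has even $x$-degree and cannot equal $1$. Hence all $P_i$ are minimal quadratics, $Q$ has at most one linear factor (from the shape of the classified $H$, or from Lemma \ref{lemma:minimalresultant1}), and after a $G$-transformation $H=F/P_1=\prod_{j\ge 2}(Y_0^2+(x+a_j)Y_1^2)$, times $Y_1$ when $d$ is odd. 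Writing $P_1=\alpha Y_0^2+\beta Y_0Y_1+\gamma Y_1^2$, Lemma \ref{lemma:resultant1} applied to $\Res_Y(P_1,Y_0^2+(x+a_j)Y_1^2)\in\mathbb{K}$ gives $P_1=\alpha(Y_0^2+(x+a_j)Y_1^2)+\beta'Y_1^2$ with $\alpha\in\mathbb{K}[x]$, $\beta'\in\mathbb{K}$, and then minimality of $P_1$ forces $\alpha\in\mathbb{K}^*$, so $P_1\equiv Y_0^2+(x+a_1)Y_1^2$ and $F\equiv\prod_{i=1}^n(Y_0^2+(x+a_i)Y_1^2)$ or $Y_1\prod_{i=1}^n(Y_0^2+(x+a_i)Y_1^2)$. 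The converse direction is immediate from the same multiplicativity formula, using $\deg_x\Delta_Y(Y_0^2+(x+a_i)Y_1^2)=1$ and $\Res_Y(Y_0^2+(x+a_i)Y_1^2,\,Y_0^2+(x+a_j)Y_1^2)=(a_i-a_j)^2\in\mathbb{K}^*$ (and, for the two exceptional $d=4$ forms, from Lemma \ref{lemma:reducible2}).

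The main obstacle is precisely this equality classification: the inequality itself is a short induction, whereas the equality case demands a somewhat delicate finite case analysis --- excluding minimal cubic factors, disposing of the sporadic mixed configurations in $Y$-degree $\le 6$, and rigidly recovering a minimal quadratic from the constancy of its resultants against $Y_0^2+(x+a)Y_1^2$ and $Y_1$. The recurring mechanism that keeps all of this tractable is a degree-parity obstruction, encapsulated in Lemma \ref{lemma:resultant1}: for a minimal polynomial the relevant resultants and discriminants are polynomials in $x$ whose ``even'' and ``odd'' parts have $x$-degrees of opposite parity and therefore cannot cancel, which repeatedly forces the intermediate coefficients to vanish.
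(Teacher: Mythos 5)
Your proposal is correct, and it is recognisably a variant of the paper's argument: the same ingredients (Lemma \ref{lemma:reducible1}, Lemma \ref{lemma:reducible2}, Lemma \ref{lemma:minimalresultant1}, Lemma \ref{lemma:resultant1}, multiplicativity of the discriminant, minimality of the nonlinear factors) carry all the weight, and your rigidification of the remaining quadratic factors via the parity obstruction in Lemma \ref{lemma:resultant1} is exactly the paper's mechanism. The difference is organisational rather than conceptual. The paper makes no induction: it writes $F=P_1\cdots P_nQ$, applies Lemma \ref{lemma:reducible2} once to the block $F_1=P_1Q$, bounds each remaining $P_i$ by $\deg_Y P_i-1\ge \tfrac12\deg_Y P_i$, and in the equality case argues directly, splitting according to the parity of $d$ and the number of linear factors (which Lemma \ref{lemma:reducible2} caps at two), showing $n=1$ in the two sporadic $d=4$ configurations. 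You instead induct on the number $n$ of nonlinear irreducible factors, peeling a single factor $P$ and invoking the full classification of $F/P$ as inductive hypothesis; this buys a uniform treatment of the generic odd and even families (one rigidification computation covers both), at the price of the extra combinatorial equality analysis of $(d_P-1)+\lceil (d_H-1)/2\rceil\ge\lceil (d-1)/2\rceil$ and of a separate exclusion of cubic factors when $n\ge 2$, which plays the role of the paper's ``$n=1$'' arguments in the even case. Two small points to tighten in a full write-up: in your $n=2$ cubic exclusion, the discriminant of $(\ell y+A)(y^2+x+a)+\beta$ has $x$-degree $3$ only when $A$ is constant (it is $4\deg_x A+1\ge 5$ otherwise), so the conclusion should be ``degree $\ne 2$'' rather than ``equals $3$''; and the parenthetical appeal to Lemma \ref{lemma:minimalresultant1} to bound the number of linear factors requires those factors to have constant coefficients, which you only know after normalising the classified complement, so the justification via the shape of $F/P_1$ is the one to keep. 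Neither affects the validity of the approach.
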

\begin{proof}
Write $F = PQ$ where $Q$ has only linear factors and $P$ has no linear factor.
Let $P = \prod_{i=1}^n P_i$ be the decomposition of $P$ into irreducible factors in $\mathbb{K}[x][Y]$.
If $n = 0$ then the result is given by Lemma \ref{lemma:reducible1}.
Assume now that $n\ge 1$.
The polynomial $F_1 = P_1Q$ satisfies Lemma \ref{lemma:reducible2}, hence
$\deg_x \Delta_Y F_1 \ge \frac {\deg Q + \deg P_1-1}2$.
For $i\ge 2$, the polynomials $P_i$ satisfy Theorem \ref{t1}, hence
$\deg_x \Delta_Y P_i \ge \deg P_i-1 \ge \frac {\deg P_i}2$.
Putting these inequalities together gives
\begin{equation}\label{eq1}
\deg_x \Delta_Y F \ge \frac {\deg Q + \deg P_1-1}2 + \sum_{i\ge 2} \frac {\deg P_i}2
= \frac {d-1}2
\end{equation}
Consider now the question of equality.
The easiest case is when $d$ is odd. In this situation, all inequalities
in (\ref{eq1}) are equalities. This implies that $\deg P_i = 2$ for all
$i\ge 2$ and $\deg F_1$ is odd with $\deg_x \Delta_Y F_1 = \frac {\deg F_1-1}2$.
By Lemma \ref{lemma:reducible2}, we can therefore assume that
$F_1 = Y_1 (Y_0^2+(x+a_1)Y_1^2)$.
The $P_i$'s have constant resultant with $Y_1$ and $Y_0^2+(x+a_1)Y_1^2$.
Using Lemma \ref{lemma:resultant1}, we deduce that they
are of the form $P_i = b_i (Y_0^2 + (x+a_i)Y_1^2)$ with $a_i,b_i \in \mathbb{K}$. The constant $\prod b_i$ can be removed using $G$--equivalence.
This gives the conclusion for $d$ odd.

If $d$ is even, Lemma \ref{lemma:reducible2} shows that
$F$ can not have more that $2$ linear factors. We have therefore three cases
to consider:

$\bullet$ If $F$ has no linear factor, then by \ref{lemma:reducible2}
we can assume that $P_1 = Y_0^2+(x+a_1)Y_1^2$ for some $a_1 \in\mathbb{K}$. 
The proof in this case is very similar to the previous case and left to
the reader.

$\bullet$ If $F$ has one linear factor, then by Lemma \ref{lemma:reducible2},
it is enough to consider the case $F_1 = Y_1P_1$ with 
$P_1 = Y_0^3+aY_0Y_1^2+(x+b)Y_1^3$ for some $a,b \in \mathbb{K}$.
The other factors $P_i$ must be quadratic and minimal, and
also have constant resultant with $F_1$. The resultant with $Y_1$ shows that
the $P_i$'s are monic in $Y_0$. If $n\ge 2$, the resultant of
$P_2$ and $P_1=Y_0^3+aY_0Y_1^2+(x+b)Y_1^3$ is constant, and
Lemma \ref{lemma:resultant1} imposes that $P_1 = Y_0 P_2 + \beta Y_1^3$ with
$ \beta \in \mathbb{K}$. This is incompatible with $\beta = x+b$, hence we must deduce
that $n = 1$ and $F = F_1$.

$\bullet$ If $F$ has two linear factors, then by Lemma \ref{lemma:reducible2},
it is enough to consider the case $F_1 = Y_0Y_1P_1$ with
$P_1=Y_0^2 + (ax+b)Y_0Y_1 + Y_1^2$ for some $a \in \mathbb{K}^*$ and $b \in \mathbb{K}$.
The other factors $P_i$ must be
quadratic and minimal, and also have constant resultant with $F_1$.
In particular, if $n\ge 2$, $\Res_Y(Y_0Y_1,P_2)\in \mathbb{K}$ imposes that
$P_2 = a_2Y_0^2+b_2Y_0Y_1+c_2Y_1^2$ with $a_2$ and $c_2$ in $\mathbb{K}$. But this
is incompatible with $\deg_x \Delta_Y P_2 = 1$, hence we must deduce
that $n = 1$ and $F = F_1$.
\end{proof}

\appendix

\section{Small $\Delta_y$ versus small $\Delta_x$}\label{s3}

The equivalence $(a)\Leftrightarrow (b)$ of Theorem \ref{t3} asserts that a monic polynomial is minimal with respect to $y$ if and only it is monic and minimal with respect to $x$. We prove here a generalisation of this statement to the case of nonmonic polynomials.

\vskip2mm
\noindent

For $f\in \mathbb{K}[x,y]$ a nonconstant bivariate polynomial we let $\lc_y(f)$ (resp. $\lc_x(f)$) stand for the leading coefficient of $f$ seen as a polynomial in $y$ (resp. in $x$). We denote by $n_x$ (resp. $n_y$) the number of \textit{distinct roots} of $\lc_y$ (resp. of $\lc_x$). We have the inequalities 
\[
n_x\le \deg_x \lc_y(f)\quad {\rm and} \quad n_y\le \deg_y \lc_x(f)
\] 
and we say that $f$ is \textit{nondegenerate} if both equalities hold, that is if both leading coefficients of $f$ are squarefree. We write for short $f(\infty,\infty)=0$ if the bihomogenisation $F$ of $f$ vanishes at the point $X_1=Y_1=0$, that is if $f$ has no monomial of bidegree $(d_x,d_y)$.

\begin{proposition}\label{symmetry}
Let $f\in \mathbb{K}[x,y]$ be a nondegenerate irreducible bivariate polynomial such that $f(\infty,\infty)=0$. The following assertions are equivalent:
\begin{enumerate}[(a)]
\item $\deg_x \Delta_y(f)=d_y+n_y-1$.
\item $\deg_y \Delta_x(f)=d_x+n_x-1$.
\item The Zariski closure $C\subset \mathbb{P}^1\times \mathbb{P}^1$ of the affine curve $f=0$ is rational, unicuspidal and smooth outside $(\infty,\infty)$.
\end{enumerate}
Moreover, the equivalence $(c)\Leftrightarrow (a)\cap (b)$ still holds for degenerate polynomials.
\end{proposition}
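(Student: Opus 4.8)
The strategy is to run the valuation‑at‑infinity computation of Proposition~\ref{p1} along \emph{both} lines $\{X=\infty\}$ and $\{Y=\infty\}$ and to turn the resulting numerical identities into geometry via the adjunction formula~(\ref{adjunction}). Bihomogenise $f$ into $F$ and write $C\subset\mathbb{P}^1\times\mathbb{P}^1$ for its curve, of bidegree $(d_x,d_y)$, so that $p_a(C)=(d_x-1)(d_y-1)$ (primitivity of $f$ guarantees $F$ has no factor in $\mathbb K[X]$, so Proposition~\ref{p1} applies). The first step is to pin down the fibre $Z_\infty:=C\cap\{X=\infty\}$. Since $f(\infty,\infty)=0$, the coefficient of $x^{d_x}y^{d_y}$ vanishes; hence $(\infty,\infty)\in C$, and $F|_{X=\infty}$ factors as $Y_1^{\,d_y-\deg_y\lc_x(f)}$ times a form whose remaining zeros are the roots of $\lc_x(f)$. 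Therefore $Z_\infty=\{(\infty,\infty)\}\cup\{(\infty,\beta):\lc_x(f)(\beta)=0\}$, a set of exactly $n_y+1$ points.

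By Proposition~\ref{p1}, $\deg_x\Delta_y(f)=2d_x(d_y-1)-2\delta_\infty-r_\infty$ with $\delta_\infty=\sum_{p\in Z_\infty}\delta_p(C)$ and $r_\infty=d_y-\sum_{p\in Z_\infty}n_p$. Set $\epsilon:=\sum_{p\in Z_\infty\setminus\{(\infty,\infty)\}}(n_p-1)\ge0$, so that $\sum_{p\in Z_\infty}n_p=n_{(\infty,\infty)}+n_y+\epsilon$. A short manipulation then shows that condition~(a) is equivalent to
\[
2\,\delta_\infty=2(d_x-1)(d_y-1)+\bigl(n_{(\infty,\infty)}-1\bigr)+\epsilon .
\]
For $(c)\Rightarrow(a)$ (and, by the complete $x\leftrightarrow y$ symmetry of the hypotheses, $(c)\Rightarrow(b)$): if (c) holds then $C$ is smooth at every point of $Z_\infty$ other than $(\infty,\infty)$, where it has a single place, so $\epsilon=0$ and $n_{(\infty,\infty)}=1$; and since $(\infty,\infty)$ is the only singular point, adjunction gives $\delta_\infty=\delta_{(\infty,\infty)}(C)=p_a(C)-g(C)=(d_x-1)(d_y-1)$. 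Substituting into the displayed identity yields $\deg_x\Delta_y(f)=d_y+n_y-1$. Note that this implication uses no nondegeneracy.

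For the converse, assume~(a). The displayed identity gives $\delta_\infty\ge(d_x-1)(d_y-1)$, while adjunction gives $\delta_\infty\le\sum_{p\in\Sing(C)}\delta_p(C)=p_a(C)-g(C)=(d_x-1)(d_y-1)-g(C)$. Hence $g(C)=0$, $n_{(\infty,\infty)}=1$, $\epsilon=0$, and $\sum_{p\in\Sing(C)\setminus Z_\infty}\delta_p(C)=0$; since $\delta_p(C)>0$ at every singular point, this forces $\Sing(C)\subset Z_\infty\subset\{X=\infty\}$, and $\epsilon=0$ says $C$ is unibranch at every point of $Z_\infty$. If $f$ is nondegenerate, $\lc_x(f)$ is squarefree, so at each of its roots $\beta$ the curve $C$ is, near $(\infty,\beta)$, a smooth graph transverse to $\{X=\infty\}$; thus the finite points of $Z_\infty$ are smooth, whence $\Sing(C)\subset\{(\infty,\infty)\}$, which together with $g(C)=0$ and $n_{(\infty,\infty)}=1$ is exactly~(c). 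By symmetry $(b)\Rightarrow(c)$ too, so $(a)\Leftrightarrow(b)\Leftrightarrow(c)$ for nondegenerate $f$. If instead we only assume (a) \emph{and} (b), then (a) gives $\Sing(C)\subset\{X=\infty\}$ and (b) gives $\Sing(C)\subset\{Y=\infty\}$, so $\Sing(C)\subset\{(\infty,\infty)\}$, and again $g(C)=0$, $n_{(\infty,\infty)}=1$ yield~(c); combined with $(c)\Rightarrow(a)\cap(b)$ from the previous paragraph, this proves the last assertion for arbitrary $f$.

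The technical heart is the local analysis of $C$ along $\{X=\infty\}$: the exact description of $Z_\infty$ together with the contact order $d_y-\deg_y\lc_x(f)$ of $C$ with $\{X=\infty\}$ at $(\infty,\infty)$, and the observation that a \emph{degenerate} leading coefficient $\lc_x(f)$ no longer rules out a unibranch singular point of $C$ at a finite point $(\infty,\beta)$ of that line. This is precisely why~(a) by itself confines $\Sing(C)$ only to the line $\{X=\infty\}$, and why the symmetric hypothesis~(b) must be added in the degenerate case in order to force the singular locus down to the single point $(\infty,\infty)$. All the remaining input — delta invariants, branch numbers, ramification — is already packaged in Proposition~\ref{p1} and the adjunction formula, so beyond the fibre analysis the argument is a bookkeeping of inequalities.
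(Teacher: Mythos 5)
Your proof is correct and follows essentially the same route as the paper: Proposition~\ref{p1} applied over $x=\infty$ (and symmetrically over $y=\infty$), the count of $n_y+1$ points of $C$ on the line $X=\infty$ coming from $f(\infty,\infty)=0$ and the roots of $\lc_x(f)$, the adjunction formula to force $g=0$ and confine the singularities, and squarefreeness of $\lc_x(f)$ giving transversality (hence smoothness) at the finite points of that line in the nondegenerate case. Your packaging of the computation into the single identity $2\delta_\infty=2(d_x-1)(d_y-1)+(n_{(\infty,\infty)}-1)+\epsilon$ is only a cosmetic reorganisation of the paper's inequalities $r_\infty\le d_y-n_y-1$ and $\delta_\infty\le(d_x-1)(d_y-1)$.
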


\begin{proof} Let us first prove $(c)\Leftrightarrow (a)\cap (b)$. Hence $f$ is allowed to be degenerate.

$\bullet\,\,(c)\Rightarrow (a)\cap (b).$ By Proposition \ref{p1}, we have the equality
\[
\deg_x \Delta_y(f)=2d_x(d_y-1)-2\delta_{\infty}-r_{\infty}
\]
where $\delta_{\infty}$ and $r_{\infty}$ stand respectively for the delta invariant and the ramification index of $f$ over $x=\infty$. Since $C$ is assumed to be rational with a unique possible singularity at $(\infty,\infty)$, the adjunction formula leads to the equality
\[
\delta_{\infty}=p_a(C)=(d_x-1)(d_y-1).
\]
Moreover, the curve is assumed to be everywhere locally irreducible. Hence the number of places of $C$ over $x=\infty$ coincides with the number of intersection points of $C$ with $x=\infty$, that is $n_x+1$. It follows that
\[
r_{\infty}=d_y-(n_x+1).
\] 
Equality $(a)$ then follows from Proposition \ref{p1}. The implication 
$(c)\Rightarrow (b)$ follows from $(c)\Rightarrow (a)$ by symmetry.

$\bullet\,\,(a)\cap (b)\Rightarrow (c)$. Let us assume that $(a)$ holds. By Proposition \ref{p1}, we have:
\begin{equation}\label{delta}
2\delta_{\infty}=2d_x(d_y-1)-(d_y+n_x-1)-r_{\infty}
\end{equation}
By assumption, the curve $C$ of $f$ has at least $n_y+1$ places over $x=\infty$ so that
\[
r_{\infty}\le d_y-n_y-1.
\]
Combined with (\ref{delta}), we get the inequality
\[
2\delta_{\infty}\ge 2(d_x-1)(d_y-1).
\]
On  the other hand, the genus being nonnegative, the adjunction formula leads to the inequality
\[
\delta_{\infty}= p_a(C)-g\le p_a(C)=(d_x-1)(d_y-1).
\]
This forces $\delta_{\infty}=p_a(C)$. Hence $g=0$ and the singularities of $C$ are located along the line $x=\infty$.  This forces also $r_{\infty}=d_y-n_y-1$ so that the curve $C$ has exactly $n_y+1$ places over $x=\infty$, hence is locally irreducible along the line $x=\infty$. If moreover $(b)$ holds, we get by symmetry that $C$ has all its singularities located on the line $y=\infty$, and that $C$ has exactly $n_x+1$ places over $y=\infty$. Hence $(a)\cap (b)$  forces $C$ to be rational, with a unique possible singularity at $(\infty,\infty)$, this singularity being irreducible.

To finish the proof, we need to show that implication $(a)\Rightarrow (c)$ holds when $f$ is nondegenerate. We just proved that $(a)$ implies that $C$ is rational with all its singularities irreducible and located on the line $x=\infty$. The nondegenerate assumption ensures that $C$ is transversal to the line $x=\infty$ (hence smooth) except possibly at $(\infty,\infty)$. Hence $(c)$ holds. 
\end{proof}

\begin{corollary}\label{minimaldisc}
Let $f\in \mathbb{K}[x,y]$ be an irreducible bivariate polynomial such that $f(\infty,\infty)=0$. Then 
\[
\deg_x \Delta_y(f)=d_y-1 \Longrightarrow \begin{cases} \deg_y \Delta_x(f)=d_x+n_x-1 \\ 
n_y=0 \end{cases}
\]
and the converse holds for nondegenerate polynomials. In particular, polynomials vanishing at $(\infty,\infty)$ and minimal with respect to $y$ are monic with respect to $x$.
\end{corollary}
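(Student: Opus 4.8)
The plan is to derive the corollary from Proposition~\ref{symmetry}, using Corollary~\ref{cmin} to convert the numerical minimality hypothesis into the geometric condition $(c)$ of that proposition. Observe first that the standing assumption $f(\infty,\infty)=0$ --- which says precisely that the coefficient of $x^{d_x}y^{d_y}$ in $f$ vanishes --- forces $d_x,d_y\ge 1$, so that Corollary~\ref{cmin} applies.

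For the forward implication, assume $\deg_x\Delta_y(f)=d_y-1$. I would first invoke Corollary~\ref{cmin}: the curve $C\subset\mathbb{P}^1\times\mathbb{P}^1$ is rational, has a unique place over the line $x=\infty$, and is smooth outside this place. The crucial step is to locate that place. Since $f(\infty,\infty)=0$ we have $(\infty,\infty)\in C\cap(x=\infty)$, and as $C$ carries at least one branch at $(\infty,\infty)$ while admitting only one place over the whole line $x=\infty$, it follows that $C\cap(x=\infty)=\{(\infty,\infty)\}$, that $C$ has a single branch there, and that $C$ is smooth away from $(\infty,\infty)$ --- in other words, condition $(c)$ of Proposition~\ref{symmetry} holds. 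The implication $(c)\Rightarrow(a)\cap(b)$ of that proposition, which is valid even for degenerate $f$, then gives both $\deg_y\Delta_x(f)=d_x+n_x-1$ and $\deg_x\Delta_y(f)=d_y+n_y-1$; comparing the second equality with the hypothesis forces $n_y=0$. Equivalently, $C\cap(x=\infty)=\{(\infty,\infty)\}$ means that the coefficient $\lc_x(f)\in\mathbb{K}[y]$ has no root in $\mathbb{K}$, hence is a nonzero constant since $\mathbb{K}$ is algebraically closed, which yields $n_y=0$ directly and is precisely the assertion that $f$ is monic with respect to $x$.

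For the converse I assume in addition that $f$ is nondegenerate and that $\deg_y\Delta_x(f)=d_x+n_x-1$ and $n_y=0$. Reading the equality $\deg_y\Delta_x(f)=d_x+n_x-1$ as condition $(b)$ of Proposition~\ref{symmetry} and using nondegeneracy to invoke the full equivalence there, condition $(a)$ holds, i.e.\ $\deg_x\Delta_y(f)=d_y+n_y-1=d_y-1$. The final sentence of the corollary is then the forward implication combined with the algebraic closedness of $\mathbb{K}$. I expect the only genuinely delicate point to be the first step of the forward implication --- pinning the unique place of $C$ over $x=\infty$ down to the point $(\infty,\infty)$ via $f(\infty,\infty)=0$, and hence upgrading ``$f$ minimal with respect to $y$'' to condition $(c)$; once that is done, everything is a direct application of the already-established Proposition~\ref{symmetry}.
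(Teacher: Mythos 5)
Your proof is correct and follows essentially the same route as the paper: use minimality (via Corollary \ref{cmin}/Theorem \ref{t1}) to get a unique place over $x=\infty$, pin it at $(\infty,\infty)$ using $f(\infty,\infty)=0$ to conclude $n_y=0$ (monicity in $x$), and then apply Proposition \ref{symmetry} for $\deg_y\Delta_x(f)=d_x+n_x-1$ and for the converse under nondegeneracy. The only difference is cosmetic: you spell out the identification $C\cap(x=\infty)=\{(\infty,\infty)\}\Leftrightarrow\lc_x(f)$ rootless, which the paper leaves implicit.
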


\begin{proof}
If $f$ is minimal, its curve $C\subset \mathbb{P}^1\times \mathbb{P}^1$ is rational unicuspidal with a unique place on $x=\infty$ by Theorem \ref{t1}. This place has to be $(\infty,\infty)$ by assumption. This forces $n_y=0$. The equality $\deg_y \Delta_x(f)=d_x+n_x-1$ follows from Proposition \ref{symmetry}. If $f$ is nondegenerate, the converse holds again by Proposition \ref{symmetry}.
\end{proof}

\section{Parametrisation of minimal polynomials.}\label{ssparam}

Let $f=\sum \alpha_{ij} x^i y^j\in \mathbb{K}[x,y]$ be a polynomial with parameters $(d_x,d_y,c)$ and with indeterminate coefficients 
\[
\alpha=(\alpha_{ij})_{(i,j)\in P(f)\cap \mathbb{Z}^2}.
\]
The  discriminant of $f$ is a polynomial in $(x,\alpha)$ of degree $2d_x(d_y-1)$ in $x$. Thus, in order to find which specialisations of $\alpha$ lead to a minimal polynomial, one needs to compute $\Delta_y(f)$ and then to solve a system of 
\[
2d_x(d_y-1)-(d_y-1)\in \mathcal{O}(d_x d_y)
\] 
polynomial equations in $\alpha$ with 
\[
\Card(P(f)\cap \mathbb{Z}^2)\in \mathcal{O}(d_x d_y)
\]
unknowns. This polynomial system turns out to be very quickly too complicated to be solved on a computer, even for reasonable size of $d_x$ and $d_y$. Moreover, there remains to perform an irreducibility test for each solution. However, we know that minimal polynomials define a rational curve, a strong information that is not used in the previous basic strategy. 
In particular, the curve admits a rational parametrisation, that is to say there exist two rational functions $u,v\in \mathbb{K}(s)$ such that the equality 
\[
f(u(s),v(s))=0
\]
holds in $\mathbb{K}(s)$. The following result summarises the relations between  minimality and parametrisation.

\begin{proposition}\label{paramet}
An irreducible polynomial $f\in \mathbb{K}[x,y]$ is minimal if and only if there exist two rational functions $u,v\in \mathbb{K}(s)$ such that:
\begin{enumerate}
\item $f(u,v)=0$ in $\mathbb{K}(s)$ (rationality)
\item $\mathbb{K}(s)=\mathbb{K}(u,v)$ (proper parametrisation)
\item $u\in \mathbb{K}[s]$ (unique place along $x=\infty$)
\item $\mathbb{K}[s]= \mathbb{K}[u,v]\cap \mathbb{K}[u,v^{-1}]$ (smoothness in $\mathbb{A}^1\times \mathbb{P}^1$).
\end{enumerate}
Moreover, given such a pair $u,v$, we have the equality
\[
d_y=\deg_s u,\quad a(f)=\deg_s v_1\quad and \quad b(f)=\deg_s v_2
\]
where $v_1,v_2\in \mathbb{K}[t]$ are coprime polynomials such that $v=v_1/v_2$. 
\end{proposition}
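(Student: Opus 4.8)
The plan is to match the four conditions, one by one, against the geometric description of minimality provided by Corollary~\ref{cmin}, using the normalisation of the curve $C\subset\mathbb{P}^1\times\mathbb{P}^1$. So I would first assume $f$ minimal. By Corollary~\ref{cmin}, $C$ is rational, has a unique place over $x=\infty$, and is smooth away from that place. Let $\nu\colon\mathbb{P}^1\to C$ be the normalisation and set $u:=x\circ\nu$, $v:=y\circ\nu\in\mathbb{K}(\mathbb{P}^1)=\mathbb{K}(s)$, i.e.\ the two projections $C\to\mathbb{P}^1$ composed with $\nu$. Then $f(u,v)=0$ is (1), and $\mathbb{K}(u,v)=\mathbb{K}(C)=\mathbb{K}(s)$ because $\nu$ is birational is (2). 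The places of $C$ over $x=\infty$ are exactly the points of $\nu^{-1}(Z_\infty)$, that is, the poles of $u$; since there is only one, after post-composing $\nu$ with a M\"obius transformation of $\mathbb{P}^1$ sending that point to $s=\infty$ we may assume $u$ has its only pole at $\infty$, so $u\in\mathbb{K}[s]$, which is (3).

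The key step is (4). Here I would observe that $C\setminus Z_\infty=C\cap(\mathbb{A}^1\times\mathbb{P}^1)$ is \emph{affine}, being finite over $\mathbb{A}^1$ via the first projection (primitivity of $f$ rules out vertical components), and that it is covered by the two affine opens $C\cap\mathbb{A}^2$ and $C\cap(\mathbb{A}^1\times\mathbb{A}^1_w)$ with $w=1/y$, whose coordinate rings are $\mathbb{K}[u,v]$ and $\mathbb{K}[u,v^{-1}]$; the sheaf property of $\mathcal{O}_C$ then gives $\Gamma(C\setminus Z_\infty,\mathcal{O}_C)=\mathbb{K}[u,v]\cap\mathbb{K}[u,v^{-1}]$ inside $\mathbb{K}(s)$. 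By the unique-place hypothesis $\nu$ restricts to the normalisation $\mathbb{A}^1_s\to C\setminus Z_\infty$, so the integral closure of $\mathbb{K}[u,v]\cap\mathbb{K}[u,v^{-1}]$ is $\mathbb{K}[s]$; smoothness of $C$ outside $Z_\infty$ means this ring is integrally closed, hence equal to $\mathbb{K}[s]$, which is (4). For the converse, suppose $u,v$ satisfy (1)--(4): by (1)--(2) the map $s\mapsto(u(s),v(s))$ is a birational morphism $\mathbb{P}^1\to C$, so $C$ is rational and this is its normalisation; $u$ is nonconstant (otherwise $f(u,v)=0$ forces a factor of $f$ in $\mathbb{K}[x]$, against primitivity), so by (3) its only pole is $s=\infty$ and $C$ has a unique place over $x=\infty$; and (4) says the coordinate ring of $C\setminus Z_\infty$ is $\mathbb{K}[s]$, which is integrally closed, so $C\setminus Z_\infty$ is smooth. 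Corollary~\ref{cmin} then yields minimality.

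For the degree formulas I would use that $\nu$ restricts to an \emph{isomorphism} $\mathbb{A}^1_s\to C\setminus Z_\infty$ (now available from the analysis above). The degree of $u\colon\mathbb{P}^1\to\mathbb{P}^1$ equals the degree of the first projection $C\to\mathbb{P}^1_x$, which is $d_y$; since $u\in\mathbb{K}[s]$ this degree is $\deg_s u$, giving $d_y=\deg_s u$. Writing $v=v_1/v_2$ with $v_1,v_2\in\mathbb{K}[s]$ coprime, the zeros (resp.\ poles) of $v$ lying in $\mathbb{A}^1_s$ are precisely the zeros of $v_1$ (resp.\ of $v_2$), so there are $\deg_s v_1$ (resp.\ $\deg_s v_2$) of them counted with multiplicity. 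Under the isomorphism $\nu$ these correspond to the points of $C$ with $y=0$ (resp.\ $y=\infty$) lying in $\mathbb{A}^1\times\mathbb{P}^1$, counted with multiplicity, which in the charts $(x,y)$ and $(x,w)$ are cut out by $f(x,0)=f_0(x)$ and by $f_{d_y}(x)$; hence $\deg_s v_1=\deg_x f_0=a(f)$ and $\deg_s v_2=\deg_x f_{d_y}=b(f)$ (using Lemma~\ref{pol}).

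The main obstacle, I expect, is the bookkeeping around condition (4): checking that $C\setminus Z_\infty$ is affine with coordinate ring exactly $\mathbb{K}[u,v]\cap\mathbb{K}[u,v^{-1}]$, and that the unique-place hypothesis upgrades $\nu$ over $\mathbb{A}^1\times\mathbb{P}^1$ from a normalisation to an isomorphism once smoothness is known. Everything else reduces to routine manipulations with the divisors of $u$ and $v$ on $\mathbb{P}^1$ and to the dictionary between rational and ordinary Newton-polytope data already set up before Lemma~\ref{pol}.
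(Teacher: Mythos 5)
Your proposal is correct and follows essentially the same route as the paper: minimality is translated through Corollary~\ref{cmin}, items (1)--(3) come from the normalisation and the unique pole of $u$, and item (4) is the integral-closure reformulation of smoothness in $\mathbb{A}^1\times\mathbb{P}^1$ via the rings $\mathbb{K}[u,v]$ and $\mathbb{K}[u,v^{-1}]$. The only divergence is the final degree formulas, which you prove directly by counting zeros and poles of $u$ and $v$ (a clean, self-contained argument), whereas the paper simply invokes the Newton-polytope result of \cite{DS}.
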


\begin{proof}
We know by Theorem \ref{t1} that $f$ is minimal if and only if the curve $C\subset \mathbb{P}^1\times \mathbb{P}^1$ is rational, with a unique place along $x=\infty$ and smooth outside this line.
Rationality is equivalent to the existence of a proper parametrisation, that is the existence of rational functions $u,v\in \mathbb{K}(s)$ such that items $(1)$ and $(2)$ hold. The rational map 
\[
(u,v):\mathbb{K} \dashrightarrow \mathbb{K}^2
\]
extends to a morphism 
\[
\rho: \mathbb{P}^1 \rightarrow  \mathbb{P}^1\times \mathbb{P}^1 
\]
whose image is $C$. Moreover, the parametrisation being proper, this morphism establishes a one-to-one correspondence between $\mathbb{P}^1$ and the places of $C$. The fact that $C$ has a unique place along the line $x=\infty$ is equivalent to the fact that $u$ as a unique pole on $\mathbb{P}^1$. Up to a Moebius transformation on $\mathbb{P}^1$, there is no less to assume that this pole is $s=\infty$, meaning precisely that $u\in \mathbb{K}[s]$. The  restriction of $C$ to $\mathbb{A}^1\times \mathbb{P}^1$ is smooth if and only if its restrictions to the two affine charts $U:=\mathbb{A}^1\times \{y\ne \infty\} \simeq  \mathbb{A}^2$ and $V:=\mathbb{A}^1\times \{y\ne 0\} \simeq  \mathbb{A}^2$. But this is also equivalent to the fact that the coordinate rings 
\[
\frac{\mathbb{K}[x,y]}{(f(x,y))}\simeq \mathbb{K}[u,v]	\quad {\rm and} \quad \frac{\mathbb{K}[x,y]}{(y^{d_y}f(x,1/y))}\simeq \mathbb{K}[u,v^{-1}]
\]  
of the affine curves $C_{|U}$ and $C_{|V}$ are integrally closed in their field of fractions $\mathbb{K}(u,v)=\mathbb{K}(s)$. Since $u\in \mathbb{K}[s]$, we deduce that $s$ is integrally closed over $\mathbb{K}[u,v]$ and over $\mathbb{K}[u,v^{-1}]$. Hence, an inclusion $\mathbb{K}[s]\subset \mathbb{K}[u,v]\cap \mathbb{K}[u,v^{-1}]$. The reverse inclusion always holds by a Gauss Lemma argument, and we get item $(4)$. Conversely, if item $(4)$ holds, then $\mathbb{K}[u,v]\cap \mathbb{K}[u,v^{-1}]=\mathbb{K}[s]$ is integrally closed so that the curve is smooth in $\mathbb{A}^1\times \mathbb{P}^1$.  The formulas for $\deg_y f$, $a(f)$ and $b(f)$ follow for instance from \cite{DS}, where the authors compute the Newton polytope of a parametrised curve.  
\end{proof}

\paragraph*{Computation of minimal polynomials.} Thanks to Proposition \ref{paramet}, computing all minimal polynomials of given parameters $(d_x,d_y,c)$ is equivalent to computing the discriminant of the  implicit equation of the parametrisation $(u,v)$ with indeterminate coefficients that satisfies items $(1), (2), (3)$ and solving a system of 
\[
(2d_x-1)(d_y-1)
\in \mathcal{O}(d_x d_y)
\] 
polynomial equations with 
\[
d_y+d_x+c\in \mathcal{O}(d_x + d_y)
\]
unknowns. When compared to the previous approach, we reduce drastically the number of unknwons and we avoid the irreducibility tests. This is the approach which allowed us to find the crucial example of Theorem \ref{contrexintro} by computer. It has to be noticed however that the degree of the polynomial system then increases.
Finally, let us mention that item $(4)$ (hence minimality) can also be checked directly by requiring that the so-called $D$-resultant of the pair $(u,v)$ is constant \cite{gut}, a computational problem of an {\it a priori} equivalent complexity.

\section{Coordinate polynomials are minimal}\label{C}

We found it instructive to give a direct proof of $(d)\Rightarrow (a)$ in Theorem \ref{t3} (coordinate polynomials are minimal) that does not use the embedding line theorem  of Abhyankar-Moh.

We recall that $\Aut(\mathbb{A}^2)$ is the set of polynomial automorphisms of the affine plane, that is maps $\sigma = (\sigma_x,\sigma_y)$,
where $\sigma_x$ and $\sigma_y$ are polynomials of $\mathbb{K}[x,y]$, such that there exists another
$\sigma^{-1} = (\sigma_x^{-1} , \sigma_y^{-1})$,
with $\sigma_x^{-1}$ and $\sigma_y^{-1}$ also elements of $\mathbb{K}[x,y]$, satisfying the relations
\[ \sigma \circ \sigma^{-1} = \sigma^{-1}\circ \sigma = \id.\]
As is easily seen, $\Aut(\mathbb{A}^2)$ is a group for the composition.

\begin{lemma}\label{lemma:irreducible}
For $\sigma \in \Aut(\mathbb{A}^2)$, the polynomials
$\sigma_x$ and $\sigma_y$ are irreducible in $\mathbb{K}[x,y]$.
\end{lemma}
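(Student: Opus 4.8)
The plan is to reduce the statement to a purely ideal-theoretic fact by passing to comorphisms. Since $\sigma\in\Aut(\mathbb{A}^2)$ admits a polynomial inverse $\sigma^{-1}=(\sigma_x^{-1},\sigma_y^{-1})$, the pullback $\sigma^*\colon\mathbb{K}[x,y]\to\mathbb{K}[x,y]$, $g\mapsto g\circ\sigma$, is a $\mathbb{K}$--algebra homomorphism; the relations $\sigma\circ\sigma^{-1}=\sigma^{-1}\circ\sigma=\id$ give $(\sigma^{-1})^*\circ\sigma^*=\sigma^*\circ(\sigma^{-1})^*=\id$, so $\sigma^*$ is a ring automorphism. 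By construction $\sigma^*(x)=\sigma_x$ and $\sigma^*(y)=\sigma_y$, so it suffices to show that the automorphism $\sigma^*$ sends the irreducible elements $x$ and $y$ to irreducible elements.

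First I would note that a coordinate is nonconstant: if $\sigma_x\in\mathbb{K}$, then the image of $\sigma$ is contained in the line $\{x=\sigma_x\}$, contradicting the surjectivity (bijectivity) of $\sigma$; hence $\sigma_x\notin\mathbb{K}$, and likewise $\sigma_y\notin\mathbb{K}$, so neither is a unit of $\mathbb{K}[x,y]$. Next, because $\sigma^*$ is a surjective ring homomorphism, it carries the principal ideal $(x)$ onto the ideal $(\sigma^*(x))=(\sigma_x)$: indeed $\sigma^*(rx)=\sigma^*(r)\,\sigma_x$ and $\sigma^*$ is onto. A ring automorphism maps prime ideals to prime ideals, and $(x)$ is prime in $\mathbb{K}[x,y]$ since $\mathbb{K}[x,y]/(x)\cong\mathbb{K}[y]$ is a domain; therefore $(\sigma_x)$ is a prime ideal. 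In the UFD $\mathbb{K}[x,y]$ a principal ideal generated by a nonzero non-unit is prime precisely when the generator is irreducible, so $\sigma_x$ is irreducible. Applying the same argument to the prime ideal $(y)$ yields the irreducibility of $\sigma_y$.

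I do not expect a genuine obstacle here: the only points deserving a line of justification are that a polynomial automorphism of $\mathbb{A}^2$ induces an algebra automorphism of $\mathbb{K}[x,y]$ (immediate from functoriality of $g\mapsto g\circ\sigma$ together with $\sigma\circ\sigma^{-1}=\sigma^{-1}\circ\sigma=\id$) and that automorphisms preserve prime principal ideals. If one prefers to avoid the language of comorphisms, the same content can be phrased geometrically: were $\sigma_x=gh$ with $g,h$ non-units, the curve $\{\sigma_x=0\}$ would be reducible or non-reduced, yet it is the image under the isomorphism $\sigma^{-1}$ of the smooth irreducible line $\{x=0\}$, a contradiction; making "non-reduced'' precise is exactly the ideal argument above, so I would present the comorphism version as the cleanest one.
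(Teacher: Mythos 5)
Your proof is correct, and it rests on the same key mechanism as the paper's: the inverse $\sigma^{-1}$ is polynomial, so composing with it (equivalently, using that the comorphism $\sigma^*$ is a ring automorphism with $\sigma^*(x)=\sigma_x$) transports irreducibility of $x$ to $\sigma_x$. The paper carries this out by direct factor-chasing (from $\sigma_x=fg$ one gets $x=(f\circ\sigma^{-1})(g\circ\sigma^{-1})$, forcing one factor constant), while you package the identical idea as preservation of the prime principal ideal $(x)$ under $\sigma^*$; the two formulations are interchangeable.
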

\begin{proof}
Assume $\sigma_x = fg$ for some $f,g \in \mathbb{K}[x,y]$.
Then $x = \sigma_x \circ \sigma^{-1} = f\circ \sigma^{-1} \times g\circ \sigma^{-1}$.
But $x$ is irreducible and $f\circ \sigma^{-1}$ and $g\circ \sigma^{-1}$ are polynomials.
Hence one of them is constant, say $f \circ \sigma^{-1}$. Composing again with $\sigma$, we deduce
that $f$ itself is constant. Hence $\sigma_x$ is irreducible.
From the relation $y = \sigma_y \circ \sigma^{-1}$, we also deduce that $\sigma_y$ is irreducible.
\end{proof}

\begin{lemma}\label{lemma:stillirreducible}
For $\sigma \in \Aut(\mathbb{A}^2)$, and any $u,v \in \mathbb{K}$,
the polynomials $\sigma_x-u$ and $\sigma_y-v$ are irreducible in $\mathbb{K}[x,y]$.
\end{lemma}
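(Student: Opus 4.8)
The statement should reduce immediately to Lemma \ref{lemma:irreducible} by pre-composing $\sigma$ with a translation. The plan is to observe that the translation
\[
\tau_{u,v}:\mathbb{A}^2\longrightarrow \mathbb{A}^2,\qquad (x,y)\longmapsto (x-u,\,y-v)
\]
is itself a polynomial automorphism of $\mathbb{A}^2$, with inverse $(x,y)\mapsto (x+u,y+v)$; indeed both maps have polynomial components and compose to the identity in either order. Since $\Aut(\mathbb{A}^2)$ is a group for composition, the map
\[
\tau_{u,v}\circ \sigma = (\sigma_x-u,\;\sigma_y-v)
\]
again belongs to $\Aut(\mathbb{A}^2)$.

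\textbf{Conclusion and main point.} Applying Lemma \ref{lemma:irreducible} to the automorphism $\tau_{u,v}\circ\sigma$ instead of $\sigma$, we get that its two components, namely $\sigma_x-u$ and $\sigma_y-v$, are irreducible in $\mathbb{K}[x,y]$. This completes the argument. There is essentially no obstacle here: the only thing to check is that translations are polynomial automorphisms (immediate) and that the group structure of $\Aut(\mathbb{A}^2)$ lets us transport the previous lemma along this composition; the real content was already extracted in Lemma \ref{lemma:irreducible}, whose proof uses irreducibility of the coordinate functions $x,y$ together with the fact that a product of two polynomials equal to $x$ (resp. $y$) forces one factor to be a unit.
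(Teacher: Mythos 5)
Your proof is correct and is essentially identical to the paper's own argument: the paper also composes $\sigma$ with the translation $\tau=(x-u,y-v)$ and applies Lemma \ref{lemma:irreducible} to $\tau\circ\sigma$. Nothing further is needed.
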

\begin{proof}
Let us define $\tau = (x-u , y-v)$.
Clearly, $\tau$ and $\tau \circ \sigma$ are in $\Aut(\mathbb{A}^2)$.
The conclusion is given by Lemma \ref{lemma:irreducible} applied to $\tau \circ \sigma$.
\end{proof}

\begin{remark}
We know that the conclusion of Lemma
\ref{lemma:stillirreducible} also holds for $\sigma^{-1}$. Hence $\sigma_x^{-1}-u$, and $\sigma_y^{-1}-v$ are irreducible polynomials.
\end{remark}

For the next lemma, we introduce the jacobian of $\sigma$, which is defined as
\[ J_\sigma = 
  \begin{pmatrix}
    \frac{\partial \sigma_x}{\partial x} & \frac{\partial \sigma_y}{\partial x} \\
    \frac{\partial \sigma_x}{\partial y} & \frac{\partial \sigma_y}{\partial y} \\
  \end{pmatrix} \; .
\]

\begin{lemma}\label{lemma:jacobian}
Let $\sigma$ be an element of $\Aut(\mathbb{A}^2)$.
There exists some $\lambda \in \mathbb{K}^*$ such that
\[ \det J_\sigma = \lambda \]
and
\[   
  \begin{pmatrix}
    \frac{\partial \sigma_x}{\partial x} & \frac{\partial \sigma_y}{\partial x} \\
    \frac{\partial \sigma_x}{\partial y} & \frac{\partial \sigma_y}{\partial y} \\
  \end{pmatrix}
= \lambda
  \begin{pmatrix}
    \frac{\partial \sigma_x^{-1}}{\partial x} \circ \sigma & \frac{\partial \sigma_y^{-1}}{\partial x} \circ \sigma \\
    \frac{\partial \sigma_x^{-1}}{\partial y} \circ \sigma & \frac{\partial \sigma_y^{-1}}{\partial y} \circ \sigma \\
  \end{pmatrix}
\]
\end{lemma}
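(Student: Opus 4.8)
The plan is to exploit the chain rule applied to the identity $\sigma^{-1}\circ\sigma=\id$, which is the only structural fact available about $\sigma$ at this point, together with Lemma \ref{lemma:irreducible} to pin down the Jacobian determinant.

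First I would differentiate the two component relations $\sigma_x^{-1}\circ\sigma = x$ and $\sigma_y^{-1}\circ\sigma = y$ with respect to $x$ and $y$. Writing these out via the chain rule gives, for each component, a linear relation among the partial derivatives of $\sigma_x^{-1}$ and $\sigma_y^{-1}$ evaluated at $\sigma$, with coefficients the partial derivatives of $\sigma_x,\sigma_y$. Packaging the four resulting scalar equations as a matrix product yields exactly
\[
  \begin{pmatrix}
    \frac{\partial \sigma_x^{-1}}{\partial x}\circ\sigma & \frac{\partial \sigma_y^{-1}}{\partial x}\circ\sigma \\
    \frac{\partial \sigma_x^{-1}}{\partial y}\circ\sigma & \frac{\partial \sigma_y^{-1}}{\partial y}\circ\sigma
  \end{pmatrix}
  \cdot
  \begin{pmatrix}
    \frac{\partial \sigma_x}{\partial x} & \frac{\partial \sigma_y}{\partial x} \\
    \frac{\partial \sigma_x}{\partial y} & \frac{\partial \sigma_y}{\partial y}
  \end{pmatrix}
  = I,
\]
once one is careful about the ordering of factors (the outer matrix is $(J_{\sigma^{-1}}\circ\sigma)^{t}$ in the notation of the statement, and the inner one is $J_\sigma$, up to the transpose convention fixed just before the lemma). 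Taking determinants of this relation shows that $\det J_\sigma$ is a unit in $\mathbb{K}[x,y]$, hence a nonzero constant $\lambda\in\mathbb{K}^*$; this establishes the first assertion. Then multiplying the matrix identity on the right by the classical adjugate formula $J_\sigma^{-1} = \tfrac{1}{\det J_\sigma}\,\mathrm{adj}(J_\sigma)$, and substituting $\det J_\sigma=\lambda$, rearranges to the displayed second equality.

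The only genuine subtlety, and what I expect to be the main thing to get right, is the bookkeeping of transposes and left-versus-right composition: the paper defines $J_\sigma$ with $\sigma_x,\sigma_y$ along the columns and $\partial_x,\partial_y$ along the rows, so the chain rule $D(\sigma^{-1}\circ\sigma) = (D\sigma^{-1}\circ\sigma)\cdot D\sigma$ from ordinary multivariable calculus must be transcribed into this convention without sign or index errors; once the indexing is fixed, everything else is the two-line determinant argument above. There is no analytic or geometric obstacle here — the content is entirely the observation that a polynomial map with a polynomial inverse has invertible, hence constant, Jacobian — so the write-up is short and the care goes into matching the lemma's matrix layout.
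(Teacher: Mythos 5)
Your proof follows the paper's argument exactly: differentiate $\sigma^{-1}\circ\sigma=\id$ to get $J_\sigma\,(J_{\sigma^{-1}}\circ\sigma)=I$, note that both matrices have polynomial entries so $\det J_\sigma$ is a unit of $\mathbb{K}[x,y]$, hence a constant $\lambda\in\mathbb{K}^*$, and then invoke the classical $2\times 2$ inverse (adjugate) formula for the second display. The only caveat is that the adjugate formula literally yields $\lambda\,(J_{\sigma^{-1}}\circ\sigma)=\mathrm{adj}(J_\sigma)$, i.e. the entries of $J_\sigma$ permuted and with signs (as the swap $\sigma=(y,x)$ shows, the display as printed cannot hold verbatim), so the stated identity should be read in that adjugate form --- an imprecision your write-up shares with the paper's own proof and which is harmless for the later degree computation where only $\lambda\in\mathbb{K}^*$ matters.
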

\begin{proof}
Differentiating the relation $\sigma^{-1} \circ \sigma = \id$ gives
\[ J_\sigma\times (J_{\sigma^{-1}}\circ \sigma) = \begin{pmatrix} 1&0\\0&1 \end{pmatrix}\]
Each matrix involved in this relation has polynomial coefficients, hence the determinant of the 
jacobians must be nonzero constants. The announced relation among $J_\sigma$ and
$J_{\sigma^{-1}}\circ\sigma$ is then given by the classical formula for the inverse of a $2\times 2$ matrix.
\end{proof}

\begin{lemma}\label{lemma:Phi_xconstant}
Let $\sigma$ be an element of $\Aut(\mathbb{A}^2)$. The following properties are equivalent:
\begin{enumerate}
\item There exists some $x_0 \in \mathbb{K}$ such that $\sigma_x(x_0,y)$ is a constant polynomial
\item There exists some $x_0 \in \mathbb{K}$ such that $\sigma_x^{-1}(x_0,y)$ is a constant polynomial
\item There exist some $a\in \mathbb{K}^*$ and $b \in \mathbb{K}$ such that $\sigma_x = ax+b$
\item There exist some $a\in \mathbb{K}^*$ and $b \in \mathbb{K}$ such that $\sigma_x^{-1} = a^{-1}x-a^{-1}b$
\item $\deg_y \sigma_x = 0$
\item $\deg_y \sigma_x^{-1} = 0$
\end{enumerate}
\end{lemma}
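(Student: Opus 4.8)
The plan is to prove the chain $(1)\Leftrightarrow(3)\Leftrightarrow(5)$ for an arbitrary automorphism, then to deduce $(2)\Leftrightarrow(4)\Leftrightarrow(6)$ by applying exactly the same statements to $\sigma^{-1}$ in place of $\sigma$, and finally to bridge the two triples through the equivalence $(3)\Leftrightarrow(4)$. The implications $(3)\Rightarrow(1)$ (any $x_0$ works) and $(3)\Rightarrow(5)$ are immediate, so the work is concentrated in three places.

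First I would treat $(1)\Rightarrow(3)$, which I expect to be the heart of the matter. Suppose $\sigma_x(x_0,y)=c$ is constant. Writing $\sigma_x-c=\sum_j p_j(x)y^j$ with $p_j\in\mathbb{K}[x]$ and specialising $x=x_0$ gives $p_j(x_0)=0$ for every $j$, hence $x-x_0$ divides $\sigma_x-c$ in $\mathbb{K}[x,y]$. Since $\sigma$ is an automorphism, $\sigma_x$ is non-constant, so $\sigma_x-c\neq 0$; by Lemma \ref{lemma:stillirreducible} it is irreducible. An irreducible polynomial divisible by the non-unit $x-x_0$ must be an associate of it, so $\sigma_x-c=a(x-x_0)$ for some $a\in\mathbb{K}^*$, that is $\sigma_x=ax+b$ with $b=c-ax_0$, which is $(3)$.

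Next, for $(5)\Rightarrow(3)$ I would invoke Lemma \ref{lemma:jacobian}. If $\deg_y\sigma_x=0$ then $\sigma_x=p(x)\in\mathbb{K}[x]$, so $\det J_\sigma=p'(x)\,\partial\sigma_y/\partial y$. Since this determinant equals a nonzero constant $\lambda$ and $\mathbb{K}[x,y]$ is a domain, $p'(x)\in\mathbb{K}^*$; as $\operatorname{char}\mathbb{K}=0$ this forces $\deg p=1$, hence $\sigma_x=ax+b$. For $(3)\Leftrightarrow(4)$ I would use the defining relations: from $\sigma_x=ax+b$ and $\sigma_x\circ\sigma^{-1}=x$ one gets $a\,\sigma_x^{-1}+b=x$, i.e. $\sigma_x^{-1}=a^{-1}x-a^{-1}b$; conversely, from $(4)$ and $\sigma_x^{-1}\circ\sigma=x$ one recovers $\sigma_x=ax+b$.

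Finally, the equivalences $(1)\Leftrightarrow(3)$ and $(5)\Leftrightarrow(3)$ just established, applied to the automorphism $\sigma^{-1}$, say precisely that both $(2)$ and $(6)$ are equivalent to ``$\sigma_x^{-1}$ is a non-constant affine polynomial'' (non-constant again because $\sigma^{-1}$ is an automorphism), and this is exactly the content of condition $(4)$ as $a\in\mathbb{K}^*$ and $b\in\mathbb{K}$ vary. This closes all the equivalences. The only genuinely non-formal ingredients are the irreducibility of $\sigma_x-c$ (Lemma \ref{lemma:stillirreducible}) used in $(1)\Rightarrow(3)$ and the constancy of the Jacobian (Lemma \ref{lemma:jacobian}) used in $(5)\Rightarrow(3)$; the rest is bookkeeping, so I do not anticipate a serious obstacle.
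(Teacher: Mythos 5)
Your proof is correct and follows essentially the same route as the paper: the key step $(1)\Rightarrow(3)$ via the irreducibility of $\sigma_x-c$ (Lemma \ref{lemma:stillirreducible}), the transfer of the first triple of equivalences to $\sigma^{-1}$ to obtain $(2)\Leftrightarrow(4)\Leftrightarrow(6)$, and the bridge $(3)\Leftrightarrow(4)$ from $\sigma\circ\sigma^{-1}=\id$. The only divergence is your Jacobian argument for $(5)\Rightarrow(3)$, which is valid but unnecessary: since $(5)$ just says $\sigma_x\in\mathbb{K}[x]$, the implication $(5)\Rightarrow(1)$ is immediate, and the paper closes the cycle that way without invoking Lemma \ref{lemma:jacobian}.
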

\begin{proof}

$(1)\Rightarrow(3)$: Let $c = \sigma_x(x_0,y)$.
Then we have $\sigma_x = (x-x_0)f + c$ for some $f \in \mathbb{K}[x,y]$.
This implies $\sigma_x -c = (x-x_0)f $, which must be irreducible by Lemma \ref{lemma:irreducible}.
Hence $f$ is constant, say $f = a$. We have now $\sigma_x = ax + b$ for some $b\in \mathbb{K}$. Because
$\sigma \circ \sigma^{-1} = \id$, $\sigma_x$ must be surjective, hence $a\neq 0$.

$(3)\Rightarrow(5)$ and $(5)\Rightarrow(1)$ are trivial.

Because $\sigma^{-1}$ is also in $\Aut(\mathbb{A}^2)$, we also have
$(2) \Leftrightarrow (4) \Leftrightarrow (6)$.

$(3)\Rightarrow(4)$:
From the relation $\sigma \circ \sigma^{-1} = \id$, we deduce
$a \sigma_x^{-1} + b = x$, whence the conclusion.

$(4)\Rightarrow(3)$: similar to $(3)\Rightarrow(4)$.
\end{proof}

\begin{lemma}\label{lemma:genericdisc_p0}
Let $f = f_dy^d + \dots + f_0$ be a generic polynomial of degree $d>0$, and
consider its discriminant $\Delta = \Disc f$.
As a polynomial in $f_0$, we have
\[\Delta = (-1)^{d(d-1)/2} d^d f_d^{d-1} f_0^{d-1} + o(f_0^{d-1}) \; .\]
\end{lemma}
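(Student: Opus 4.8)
The plan is to extract the coefficient of $f_0^{d-1}$ from the classical discriminant--resultant identity
\[
\Delta=\Disc_y(f)=\frac{(-1)^{d(d-1)/2}}{f_d}\,\Res_y(f,\partial_y f).
\]
The decisive observation is that $\partial_y f=d f_d y^{d-1}+\cdots+2f_2y+f_1$ does not involve $f_0$, so its roots are independent of $f_0$; this is exactly why one should expand the resultant over the roots of $\partial_y f$ rather than over those of $f$. Since $\deg_y\partial_y f=d-1$ and $d(d-1)$ is even, $\Res_y(f,\partial_y f)=\Res_y(\partial_y f,f)$, and the product formula for the resultant gives
\[
\Res_y(\partial_y f,f)=\lc_y(\partial_y f)^{\,d}\prod_{j=1}^{d-1}f(\beta_j)=(d f_d)^{d}\prod_{j=1}^{d-1}f(\beta_j),
\]
where $\beta_1,\dots,\beta_{d-1}$ are the roots of $\partial_y f$, counted with multiplicity (living in a splitting field over the fraction field of $\mathbb{K}[f_0,\dots,f_d]$).

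Next I would regard the right--hand side as a polynomial in the single indeterminate $f_0$, treating $f_1,\dots,f_d$ and the $\beta_j$ as $f_0$--free. Each factor $f(\beta_j)=f_0+\big(f_1\beta_j+\cdots+f_d\beta_j^{\,d}\big)$ is monic of degree $1$ in $f_0$, so the product $\prod_{j=1}^{d-1}f(\beta_j)$ is monic of degree $d-1$ in $f_0$: its coefficient of $f_0^{d-1}$ equals $1$, and every other term has $f_0$--degree strictly below $d-1$. Hence $\Res_y(f,\partial_y f)=(d f_d)^{d}f_0^{d-1}+o(f_0^{d-1})$, and multiplying by $(-1)^{d(d-1)/2}f_d^{-1}$ gives $\Delta=(-1)^{d(d-1)/2}d^{d}f_d^{d-1}f_0^{d-1}+o(f_0^{d-1})$, as asserted. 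The bound $\deg_{f_0}\Delta\le d-1$ is visible directly as well: on the Sylvester matrix of $(f,\partial_y f)$ the variable $f_0$ occurs only in the $d-1$ rows built from the coefficients of $f$.

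Everything here is a routine manipulation, so there is no serious obstacle; the only two points that need care are using the correct normalisation of the discriminant--resultant identity and performing the expansion over the roots of $\partial_y f$ (not of $f$), which keeps the dependence on $f_0$ transparent. An alternative, purely combinatorial route is a Laplace expansion of the Sylvester matrix of $(f,\partial_y f)$ along its $d-1$ rows of $f$--coefficients: exactly one choice of $d-1$ columns contributes the monomial $f_0^{d-1}$, and the complementary minor, read off the $\partial_y f$--block, is triangular with all diagonal entries equal to $d f_d$, hence equal to $(d f_d)^{d}$; in that variant the mild nuisance is checking that the Laplace sign is $+1$, which amounts to observing that $\big(1+\cdots+(d-1)\big)+\big((d+1)+\cdots+(2d-1)\big)=2d(d-1)$ is even.
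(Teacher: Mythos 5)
Your main argument is correct and takes a different route from the paper. The paper works directly with the Sylvester matrix: it writes $(-1)^{d(d-1)/2} f_d \Delta = \det S$ for the Sylvester matrix of $(f,\partial_y f)$ and extracts the coefficient of $f_0^{d-1}$ by expanding the determinant, the complementary block of the $f_0$-entries being triangular with diagonal $d f_d$ — i.e.\ exactly the Laplace-expansion variant you sketch at the end (and your parity check of the sign, $2d(d-1)$ even, matches the paper's $(-1)^{d(d-1)}=1$). Your primary route instead uses the product formula $\Res_y(\partial_y f,f)=(d f_d)^d\prod_{j}f(\beta_j)$ over the roots $\beta_j$ of $\partial_y f$, together with the key observation that $\partial_y f$ is $f_0$-free, so each factor $f(\beta_j)=f_0+(f_1\beta_j+\cdots+f_d\beta_j^d)$ is monic of degree $1$ in $f_0$; this makes the $f_0$-degree and the top coefficient visible with no matrix bookkeeping. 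What the root-based argument buys is transparency of the $f_0$-dependence (the only points needing care, which you handle, are that $d f_d\neq 0$ generically so $\deg\partial_y f=d-1$, the sign $\Res(f,\partial_y f)=\Res(\partial_y f,f)$ from $d(d-1)$ even, and the normalisation $\Delta=(-1)^{d(d-1)/2}f_d^{-1}\Res_y(f,\partial_y f)$); what the paper's determinantal expansion buys is that it stays entirely inside $\mathbb{K}[f_0,\dots,f_d]$, with no passage to a splitting field. Both are complete proofs of the stated expansion.
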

\begin{proof}
We have $(-1)^{d(d-1)/2} f_d \Delta = \det S$,
where $S$ is the Sylvester matrix of $f$ and $f'$:
\[ S = 
  \left( \begin{array}{ccc|cccc}
   f_d &        &    & df_d &        &  &  \\
       & \ddots &    &      & \ddots &  &  \\
       &        & f_d&      &        & df_d &\\
\hline
   f_1 &        &    & f_1   &        &      & df_d\\
\hline
   f_0&         &    &       & \ddots &      &\\
        & \ddots& f_1&       &        & f_1  &\\
        &       & f_0&       &        &      & f_1\\
    \end{array} \right)
\]
Expanding this determinant gives
\[
 \det S 
  = (-1)^{d(d-1)} f_0^{d-1} (df_d)^d + o(f_0^{d-1})
\]
whence the result.
\end{proof}

\begin{proposition}\label{prop:equality_degrees}
Let $\sigma$ be an element of $\Aut(\mathbb{A}^2)$.
The following equalities hold:
\[ \begin{array}{rl}
  \deg_y \sigma_x = \deg_y \sigma_x^{-1} & \qquad
  \deg_x \sigma_y = \deg_x \sigma_y^{-1}\\

  \deg_x \sigma_x = \deg_y \sigma_y^{-1} & \qquad 
  \deg_y \sigma_y = \deg_x \sigma_x^{-1}\\
\end{array}\]

If $\deg_y \sigma_x = 0$, then $\sigma_x = ax+b$, otherwise $\sigma_x$ is monic in $y$.

If $\deg_x \sigma_y = 0$, then $\sigma_y = ay+b$, otherwise $\sigma_y$ is monic in $x$.

If $\deg_x \sigma_x = 0$, then $\sigma_x = ay+b$, otherwise $\sigma_x$ is monic in $x$.

If $\deg_y \sigma_y = 0$, then $\sigma_y = ax+b$, otherwise $\sigma_y$ is monic in $y$.

\end{proposition}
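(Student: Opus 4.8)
The plan is to exploit that the fibres of a coordinate polynomial are copies of $\mathbb{A}^1$ carrying a \emph{proper} polynomial parametrisation, read off from $\sigma^{-1}=(\sigma_x^{-1},\sigma_y^{-1})$. Conjugating $\sigma$ by the involution $(x,y)\mapsto(y,x)$ — on the left or on the right — and replacing $\sigma$ by $\sigma^{-1}$ permute the eight assertions of the proposition, so it suffices to prove the two degree identities $\deg_y\sigma_x=\deg_y\sigma_x^{-1}$ and $\deg_x\sigma_x=\deg_y\sigma_y^{-1}$, together with the single statement that $\deg_y\sigma_x=0$ forces $\sigma_x=ax+b$ with $a\in\mathbb{K}^*$ while $\deg_y\sigma_x\ge1$ forces $\lc_y(\sigma_x)\in\mathbb{K}^*$. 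The ``degenerate'' cases are immediate: if $\deg_y\sigma_x=0$ then $\sigma_x\in\mathbb{K}[x]$, Lemma~\ref{lemma:Phi_xconstant} gives $\sigma_x=ax+b$, and its equivalence $(3)\Leftrightarrow(4)$ gives $\deg_y\sigma_x^{-1}=0$ as well; similarly $\deg_x\sigma_x=0$ forces $\sigma_x=ay+b$ (Lemma~\ref{lemma:Phi_xconstant} applied to $\sigma\circ(y,x)$), and then $\sigma_x\circ\sigma^{-1}=x$ reads $a\sigma_y^{-1}+b=x$, so $\deg_y\sigma_y^{-1}=0$. So assume from now on $\deg_y\sigma_x\ge1$ (and $\deg_x\sigma_x\ge1$ for the second identity).

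For the identities, fix a constant $t\in\mathbb{K}$ and consider the fibre $C_t:=\sigma^{-1}(\{x=t\})=\{\sigma_x=t\}\subset\mathbb{A}^2$. By Lemma~\ref{lemma:stillirreducible} the polynomial $\sigma_x-t$ is irreducible, hence (up to a scalar) the defining equation of the irreducible curve $C_t$; in particular $\deg_y(\sigma_x-t)=\deg_y\sigma_x$ and $\deg_x(\sigma_x-t)=\deg_x\sigma_x$. On the other hand $C_t$ is the image of $s\mapsto\sigma^{-1}(t,s)=(u(s),v(s))$ with $u=\sigma_x^{-1}(t,\cdot)$ and $v=\sigma_y^{-1}(t,\cdot)$ in $\mathbb{K}[s]$; this map is injective because $\sigma^{-1}$ is, hence it is a birational (``proper'') parametrisation and $\mathbb{K}(C_t)=\mathbb{K}(u,v)=\mathbb{K}(s)$. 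Since $\deg_y\sigma_x\ge1$, $C_t$ is not contained in a vertical line, so $u$ is non-constant and
\[
\deg_y\sigma_x=\deg_y(\sigma_x-t)=[\mathbb{K}(C_t):\mathbb{K}(x)]=[\mathbb{K}(s):\mathbb{K}(u)]=\deg_s u .
\]
Choosing $t$ outside the finite zero set of $\lc_y(\sigma_x^{-1})$ gives $\deg_s u=\deg_y\sigma_x^{-1}$, which is the first identity; the same computation for the projection to the $y$-axis (using $\deg_x\sigma_x\ge1$, so $v$ is non-constant) gives $\deg_x\sigma_x=\deg_s v=\deg_y\sigma_y^{-1}$.

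For monicity, assume $\deg_y\sigma_x=m\ge1$; the identity just proved gives $\deg_y\sigma_x^{-1}\ge1$, so for generic $t$ the polynomial $u=\sigma_x^{-1}(t,\cdot)$ is non-constant. Extend $(u,v)\colon\mathbb{A}^1\to\mathbb{A}^2$ to a morphism $\mathbb{P}^1\to\mathbb{P}^1\times\mathbb{P}^1$: its image is the closure $\overline{C_t}$, and since $u,v$ are polynomials the only point of $\overline{C_t}$ lying off $\mathbb{A}^2$ is the image of $s=\infty$, whose first coordinate is $\infty$ because $u$ is non-constant. Hence $\overline{C_t}$ contains no point $(x_0,\infty)$ with $x_0\in\mathbb{K}$. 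On the other hand, the homogenisation of $\sigma_x-t$ in the variable $y$ shows that the points of $\overline{C_t}$ with $y=\infty$ and finite $x$-coordinate are exactly those $(x_0,\infty)$ with $\lc_y(\sigma_x)(x_0)=0$. Therefore $\lc_y(\sigma_x)$ has no root, i.e.\ $\lc_y(\sigma_x)\in\mathbb{K}^*$ and $\sigma_x$ is monic in $y$. The three remaining monicity statements follow from this one by the symmetries mentioned at the start.

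The delicate point is the monicity step: one has to check carefully that $(u,v)$ genuinely extends to a morphism of $\mathbb{P}^1$ with image exactly $\overline{C_t}$, that the intersection of $\overline{C_t}$ with $\{y=\infty\}$ is correctly read off from $\lc_y(\sigma_x)$, and that all degenerate positions (some partial degree vanishing) are actually covered — the latter being precisely where Lemma~\ref{lemma:Phi_xconstant} and the genericity of $t$ are invoked. By contrast, the two degree identities are a soft consequence of the properness of the parametrisation of the fibres of a coordinate.
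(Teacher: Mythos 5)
Your proof is correct, but it follows a genuinely different route from the paper's. The paper argues by fibre counting: for fixed $x_0,u_0\in\mathbb{K}$ it compares the number of distinct roots of $\sigma_x(x_0,y)=u_0$ with that of $\sigma_x^{-1}(u_0,v)=x_0$, matches them through the bijection $\sigma$, and then uses Lemma \ref{lemma:stillirreducible} together with Lemma \ref{lemma:genericdisc_p0} to pick $u_0$ so that both specialised polynomials are squarefree of full degree; this yields $\deg_y\sigma_x(x_0,y)=\deg_y\sigma_x^{-1}$ for \emph{every} $x_0$, so the degree identity and the monicity of $\sigma_x$ in $y$ fall out of one and the same computation, with the remaining statements obtained by the same symmetries you use. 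You instead fix one fibre $C_t=\{\sigma_x=t\}$, observe that $s\mapsto\sigma^{-1}(t,s)$ is a proper polynomial parametrisation of this irreducible curve (irreducibility again by Lemma \ref{lemma:stillirreducible}), and read the partial degrees off as the field-extension degrees $[\mathbb{K}(s):\mathbb{K}(u)]$ and $[\mathbb{K}(s):\mathbb{K}(v)]$ for generic $t$; monicity then needs the separate projective-closure argument in $\mathbb{P}^1\times\mathbb{P}^1$ that you correctly single out as the delicate step (a root of $\lc_y(\sigma_x)$ would force a boundary point $(x_0,\infty)$ of $\overline{C_t}$ with $x_0$ finite, impossible because the only boundary point is the image of $s=\infty$, whose first coordinate is $\infty$). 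Your approach is conceptually closer to the geometric picture the paper uses elsewhere (Theorem \ref{t1}, Proposition \ref{paramet}): in effect you re-prove that a fibre of a coordinate has a unique place over $x=\infty$; the price is that you invoke slightly heavier standard facts — the characteristic-zero fact that an injective parametrisation is birational, Gauss's lemma to identify $[\mathbb{K}(C_t):\mathbb{K}(x)]$ with $\deg_y(\sigma_x-t)$, and the identification of $\overline{C_t}\cap\{y=\infty\}$ via homogenisation — whereas the paper's computation is deliberately elementary (no function fields or closures) and delivers monicity for free. Two small remarks: properness of your parametrisation can be had more cheaply by noting $\sigma_y(u(s),v(s))=s$, so that $\mathbb{K}[u,v]=\mathbb{K}[s]$ outright, avoiding the injectivity-implies-birationality argument; and in your generic choice of $t$ it is worth saying explicitly that Lemma \ref{lemma:Phi_xconstant} guarantees $\deg_y\sigma_x^{-1}\ge 1$ (resp.\ that non-constancy of $v$ forces $\deg_y\sigma_y^{-1}\ge 1$), so that the leading coefficients whose zero sets you avoid are indeed nonzero polynomials.
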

\begin{proof}
We focus on the first equality and the first sentence.

$\bullet$
If $\deg_y \sigma_x = 0$ or $\deg_y \sigma_x^{-1} = 0$, then by Lemma \ref{lemma:Phi_xconstant},
we have the equality.
We assume now that $\deg_y \sigma_x > 0$ and $\deg_y \sigma_x^{-1} > 0$.
We write $d = \deg_y \sigma_x$ and $D = \deg_y \sigma_x^{-1}$.

$\bullet$
For $x_0\in \mathbb{K}$ and $u_0 \in \mathbb{K}$,
we write $d_0 = \deg_y \sigma_x(x_0,y)$ and
$D_0 = \deg_v \sigma_x^{-1}(u_0,v)$.
By Lemma \ref{lemma:Phi_xconstant}, we know that $0 < d_0 \le d$
and $0 < D_0 \le D$.
The number of distinct roots of the equation $\sigma_x(x_0,y)-u_0$ is an integer $r$ such that
$0 < r < d \le d_0$. 
We denote by $(y_i)_{i\le r}$ these distinct roots.
Similarly, the number of distinct roots of the equation
$\sigma_x^{-1}(u_0,v)-x_0$ is an integer $R$ such that 
$0 < R < D \le D_0$.
We have $\sigma(x_0,y_i) = (u_0,\sigma_y(x_0,y_i))$.
Because $\sigma$ is a bijection on $\mathbb{A}^2$ and the $y_i$ are distinct, we deduce that
the $v_i=\sigma_y(x_0,y_i)$ are also distinct. Hence $r \le R$.
Of course, $x_0$ and $u_0$ play a symmetrical role, hence we also have $R \le r$, whence
$r = R$.

$\bullet$
Now, $x_0 \in \mathbb{K}$ is still fixed without any condition, but we assume that $u_0 \in \mathbb{K}$
is chosen such that the polynomial $\sigma_x(x_0,y) - u_0$ is squarefree
(this is possible since $\deg_y \sigma_x(x_0,y) = d_0 > 0$, hence by Lemma
\ref{lemma:genericdisc_p0}, $\Disc_y (\sigma_x(x_0,y)-u_0)$ is not the zero polynomial).
We also assume that $u_0$ is chosen such that
$\deg_v \sigma_x^{-1}(u_0,v)-x_0 = D$ and
$\sigma_x^{-1}(u_0,v) - x_0$ is squarefree (it would be impossible only if
$\Disc_v(\sigma_x^{-1}(u,v) - x_0)$ were the zero polynomial, hence
$\sigma_x^{-1}(u,v)-x_0$ would have a square factor. This is however not allowed by
Lemma \ref{lemma:stillirreducible}).
With all these assumptions, we have the following equalities:
$r = d_0$ and $R = D_0 = D$. Since we have proved $r=R$, we have
$d_0 = D$.

$\bullet$
There is certainly a choice of $x_0$ such that
$\deg_y \sigma(x_0,y) = \deg_y \sigma(x,y)$, hence we have $d = D$, which is the
announced equality. Of course, this equality does not depend an any choice of $x_0$ and $u_0$.

$\bullet$
If $x_0$ is chosen without any condition, we still have the equalities
$d_0 = D = d$. This imply that $\deg_y \sigma_x(x_0,y)$ does not
depend on $x_0$, hence the leading coefficient of $\sigma_x(x,y)$ along the
variable $y$ does not depend on $x$. This proves that $\sigma_x(x,y)$ is monic in $y$.

$\bullet$
The first equality and the first sentence are proved, and we can now turn to
the others. They are obtained from the first case by considering 
successively $S \circ \sigma \circ S$, $\sigma \circ S$, and $S \circ \sigma$,
where $S(x,y) = (y,x)$.
\end{proof}

\begin{theorem}
Let $\sigma$ be an element of $\Aut(\mathbb{A}^2)$.
If $\deg_y (\sigma_x) \neq 0$, then
\[ \deg_x (\Disc_y \sigma_x) = \deg_y (\sigma_x) - 1 \;.\]
\end{theorem}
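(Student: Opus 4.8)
The plan is to reduce the statement to a one-variable degree count on the coordinate curve $C_0:=\{\sigma_x=0\}$, the key input being the Jacobian relation of Lemma \ref{lemma:jacobian}, which controls $\partial_y\sigma_x$ along $C_0$.

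First, since $\deg_y\sigma_x=d\ge 1$, Proposition \ref{prop:equality_degrees} gives that both $\sigma_x$ and $\sigma_x^{-1}$ are monic in $y$ of degree $d$. Monicity makes $\lc_y(\sigma_x)$ a nonzero constant, so the discriminant--resultant identity $\Delta_y(\sigma_x)=\pm\,\lc_y(\sigma_x)^{-1}\Res_y(\sigma_x,\partial_y\sigma_x)$ reduces the claim to proving $\deg_x\Res_y(\sigma_x,\partial_y\sigma_x)=d-1$. Next, observe that $C_0=\sigma^{-1}(\{x=0\})$, so that $\phi\colon\mathbb{A}^1\to C_0$, $t\mapsto\sigma^{-1}(0,t)=(P(t),Q(t))$ with $P=\sigma_x^{-1}(0,\cdot)$ and $Q=\sigma_y^{-1}(0,\cdot)$, is an isomorphism onto $C_0$; since $\sigma_x^{-1}$ is monic in $y$ of degree $d$, the polynomial $P$ has degree exactly $d$, hence in characteristic zero $P'$ has degree exactly $d-1$.

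The heart of the argument is to differentiate the identities $\sigma_x(P(t),Q(t))=0$ and $\sigma_y(P(t),Q(t))=t$ with respect to $t$. This yields a $2\times 2$ linear system for $(P'(t),Q'(t))$ whose matrix is $J_\sigma^{T}$ evaluated along $\phi$, with determinant $\lambda\in\mathbb{K}^*$ by Lemma \ref{lemma:jacobian}; Cramer's rule then gives $(\partial_y\sigma_x)\circ\phi(t)=-\lambda\,P'(t)$. It remains to turn this pointwise identity on $C_0$ into a resultant identity. For generic $x_0\in\mathbb{K}$, the polynomial $P(t)-x_0$ has $d$ distinct roots $t_1,\dots,t_d$, the fibre $C_0\cap\{x=x_0\}$ consists of the $d$ distinct points $\phi(t_j)$ (by injectivity of $\phi$), whence $\sigma_x(x_0,y)=\lc_y(\sigma_x)\prod_j(y-Q(t_j))$; evaluating $\Res_y(\sigma_x,\partial_y\sigma_x)$ via its product formula over the roots of $\sigma_x(x_0,\cdot)$ and substituting $(\partial_y\sigma_x)(x_0,Q(t_j))=-\lambda P'(t_j)$ shows that $\Res_y(\sigma_x,\partial_y\sigma_x)(x_0)=c\cdot\Res_t\!\big(P(t)-x_0,P'(t)\big)$ for a constant $c\in\mathbb{K}^*$ independent of $x_0$. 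Since this holds for all generic $x_0$, it is an identity of polynomials in $x$, and $\Res_t(P(t)-x,P'(t))=\pm\,\lc(P')^{d}\prod_{P'(\tau)=0}(P(\tau)-x)$ has degree in $x$ exactly $\deg P'=d-1$. Hence $\deg_x\Delta_y(\sigma_x)=d-1$.

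I expect the main obstacle to be the bookkeeping in the last step: identifying the roots of $\sigma_x(x_0,\cdot)$ with the values $Q(t_j)$ with their correct multiplicities (most cleanly phrased as the equality, under $\phi$, of the scheme-theoretic fibres of $x|_{C_0}$ and of the degree-$d$ map $P\colon\mathbb{A}^1\to\mathbb{A}^1$, which also disposes of non-generic $x_0$), together with keeping the stray constants under control. An alternative to this computation is to invoke Proposition \ref{p1}: as $C_0\cong\mathbb{A}^1$ is smooth and $\sigma_x$ is monic in $y$, one gets $\ord_\alpha\Delta_y(\sigma_x)=r_\alpha=d-\#(C_0\cap\{x=\alpha\})$ for every $\alpha\in\mathbb{A}^1$, and summing these ramification defects along $P$ gives $\sum_{\alpha}r_\alpha=\deg P'=d-1$ by Riemann--Hurwitz; either route concludes the proof.
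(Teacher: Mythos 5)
Your proof is correct and follows essentially the same route as the paper: reduce via monicity of $\sigma_x$ in $y$ (Proposition \ref{prop:equality_degrees}) to a resultant, parametrise $\{\sigma_x=0\}$ by $t\mapsto\sigma^{-1}(0,t)$, use Lemma \ref{lemma:jacobian} to identify $\partial_y\sigma_x$ along this curve with (a constant times) the derivative of $P(t)=\sigma_x^{-1}(0,t)$, and conclude that $\Disc_y\sigma_x$ is a constant multiple of $\Disc_t(P(t)-x)$, of degree exactly $d-1$ in $x$. The only differences are cosmetic: you re-derive the Jacobian entry by Cramer's rule and count the degree via the product over critical points of $P$ (with a cleaner generic-fibre argument), where the paper invokes Lemma \ref{lemma:jacobian} entrywise and Lemma \ref{lemma:genericdisc_p0}.
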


\begin{proof}By Proposition \ref{prop:equality_degrees}, $\sigma_x$ is monic in $y$. Let $a \in \mathbb{K}^*$ be its leading
coefficient in $y$, and $d$ its degree. For $x\in \mathbb{K}$ fixed, we have
\[\begin{array}{rll}
 (-1)^{d(d-1)/2} a \Disc_y \sigma_x 
 &= \Res_y(\sigma_x(x,y), \frac{\partial \sigma_x}{\partial y}(x,y)) \\
 &= \displaystyle a^{d-1} \prod_{y,\sigma_x(x,y)=0} \frac{\partial \sigma_x}{\partial y}(x,y)\\
\end{array}\]
Since $x \in \mathbb{K}$ is fixed, the equation $\sigma_x(x,y) = 0$
is equivalent to $\sigma(x,y) = (0,t)$.
Hence the solutions of $\sigma_x(x,y) = 0$ are given by
$y = \sigma_y^{-1}(0,t)$, where $t$ are the solutions of $x = \sigma_x^{-1}(0,t)$.
We have therefore
\[\begin{array}{rll}
 (-1)^{d(d-1)/2} a \Disc_y \sigma_x 
 &= \displaystyle a^{d-1} \prod_{t,\sigma_x^{-1}(0,t)=x} \frac{\partial \sigma_x}{\partial y}(\sigma^{-1}(0,t))\\
 &= \displaystyle a^{d-1} \prod_{t,\sigma_x^{-1}(0,t)=x} \lambda^{-1} \frac{\partial \sigma_x^{-1}}{\partial y}(0,t) & \qquad \qquad \text{(by Lemma \ref{lemma:jacobian})}\\
\end{array}\]
But $\sigma_x^{-1}(x,y)$ is monic in $y$ of degree $d$ (by Proposition \ref{prop:equality_degrees}).
We denote by $b\in \mathbb{K}^*$ its leading coefficient.
The degree of the polynomial $\sigma_x^{-1}(0,t)$ in $t$ is therefore $d$, and its leading coefficient is $b$.
We can now write
\[\begin{array}{rll}
 (-1)^{d(d-1)/2} a \Disc_y \sigma_x 
 &= \displaystyle a^{d-1} \lambda^{-d} b^{1-d} \Res_t \left(\sigma_x^{-1}(0,t)-x, \frac{\partial \sigma_x^{-1}}{\partial y}(0,t)\right)
\end{array}\]
This equality is true for all $x\in \mathbb{K}$, hence is a polynomial equality in $\mathbb{K}[x]$.
We also have
\[ \Disc_y \sigma_x
 = a^{d-2} \lambda^{-d} b^{2-d} \Disc_t (\sigma_x^{-1}(0,t)-x)\]
Since $\sigma_x^{-1}(0,t)$ has degree exactly $d$ in $t$, by Lemma \ref{lemma:genericdisc_p0},
$\Disc_y \sigma_x$ has degree exactly $d-1$ in $x$.
\end{proof}

\affiliationone{
   D. Simon and M. Weimann\\
   LMNO, Universit\'e de Caen BP 5186\\
   F 14032 Caen Cedex\\
   France
   \email{denis.simon@unicaen.fr}\\
   \email{martin.weimann@unicaen.fr}
}

\end{document}